\newtheorem{theorem}{Theorem}[section]
\newtheorem{prop}[theorem]{Proposition}
\newtheorem{lemma}[theorem]{Lemma}
\newtheorem{cor}[theorem]{Corollary}
\newtheorem{ex}[theorem]{Example}
\newtheorem{dfn}[theorem]{Definition}
\newtheorem{remark}[theorem]{Remark}
\newtheorem{claim}[theorem]{Claim}
\def\ep{\epsilon}
\def\R{\mathbb{R}}
\def\Z{\mathbb{Z}}
\def\N{\mathbb{N}}
\def\C{\mathbb{C}}
\def\T{\mathbb T}
\def\cal R{\mathcal R}
\def\P{\mathcal P}
\def\eps{\epsilon}
\DeclarePairedDelimiter\floor{\lfloor}{\rfloor}
\begin{document}

\title{Hamiltonian knottedness and lifting paths from the shape invariant}
\date{\today}

\author{Richard Hind}
\email{hind.1@nd.edu}
\address{Department of Mathematics, University of Notre Dame, Notre Dame, IN 46556, USA}

\author{Jun Zhang}
\email{jun.zhang.3@umontreal.ca}
\address{Centre de Recherches Math\'ematiques, University of Montreal, C.P. 6128 Succ. Centre-Ville Montreal, QC H3C 3J7, Canada}

\maketitle 

\begin{abstract} The Hamiltonian shape invariant of a domain $X \subset \R^4$, as a subset of $\R^2$, describes the product Lagrangian tori which may be embedded in $X$. We provide necessary and sufficient conditions to determine whether or not a path in the shape invariant can lift, that is, be realized as a smooth family of embedded Lagrangian tori, when $X$ is a basic $4$-dimensional toric domain such as a ball $B^4(R)$, an ellipsoid $E(a,b)$ with $\frac{b}{a} \in \N_{\geq 2}$, or a polydisk $P(c,d)$. As applications, via the path lifting, we can detect knotted embeddings of product Lagrangian tori in many toric $X$. 
We also obtain novel obstructions to symplectic embeddings between domains that are more general than toric concave or toric convex.

\tableofcontents

\end{abstract}

\section{Notation}

Here we gather some common notations. We work in $\R^4 \equiv \C^2$ with the standard symplectic form $\omega = \frac{i}{2} \sum_{k=1}^2 dz_k \wedge d\overline{z}_k$. The moment map is
\[ \mu:\C^2 \to \R_{\ge 0}^2 \,\,\,\,\, (z_1, z_2) \mapsto (\pi|z_1|^2, \pi|z_2|^2).\]
We use coordinates $(r, s)$ on $\R_{\ge 0}^2$.
Given a subset $\Omega \subset \R_{\ge 0}^2$ we define the corresponding toric domain $X_{\Omega} := \mu^{-1}(\Omega) \subset \C^2$. A toric {\it star-shaped} domain $X_{\Omega}$ has $\partial \Omega$ transversal to the radial vector field $X_{\rm rad} = r\frac{\partial}{\partial r} + s \frac{\partial}{\partial s}$ of $\R_{>0}^2$. In the case when this subset $\Omega$ is of the form 
\[\Omega = \{(r,s) \in \R^2 \,| \, 0 \le r \le a, 0 \le s \le f(r) \}, \]
we say that $X_{\Omega}$ is (toric) {\it convex} if $f$ is concave, and $X_{\Omega}$ is (toric) {\it concave} if $f$ is convex. Examples of toric domains which are both convex and concave are symplectic (closed) ellipsoids. We define the (open) ellipsoid $E(a,b) := X_{\Delta(a,b)}$ with $0<a\leq b$, where $\Delta(a,b) := \{0 \le r < a, 0 \le s < b - \frac{br}{a} \}$. The ball of capacity $R$ is denoted by $B^4(R) = E(R,R)$. We say that an ellipsoid $E(a,b)$ is {\it integral} if $\frac{b}{a} \in \N_{\geq 2}$. The polydisk $P(c,d): = X_{\Box(c,d)}$ with $0<c \leq d$,  where $\Box(a,b) := \{0 \le r <a, 0 \le s <b\}$. A product Lagrangian torus is a torus 
\begin{equation} \label{prod-torus}
L(r,s) := X_{\{ (r,s) \}} = \mu^{-1}(r,s).
\end{equation}
Given subsets $U, V \subset \R^4$ we write $\Phi: U \hookrightarrow V$ to mean that there exists a Hamiltonian diffeomorphism $\Phi$ of $\R^4$ embedding $U$ into ${\rm int}(V)$. If $\Phi$ is not emphasized, we simply denote such an embedding by $U \hookrightarrow V$.

We will denote by ${\mathcal L}(X)$ the set of Lagrangian tori in a domain $X \subset \C^2$ which are Hamiltonian isotopic in $\C^2$ to a product torus. The space ${\mathcal L}(X)$ is equipped with the smooth topology.


\section{Introduction}

A fundamental problem in symplectic topology is to understand the space of Lagrangian submanifolds, denoted by ${\mathcal L}(X)$, of a given manifold $X$, and in particular the action of the Hamiltonian group ${\rm Ham}(X)$ on $\mathcal L(X)$. We will describe some steps in this direction when the symplectic manifold is a domain in $\R^4$, including balls, integral ellipsoids, and polydisks. Our Lagrangian submanifolds are tori, and for simplicity we will restrict attention to those which are Hamiltonian isotopic to a product torus $L(r,s)$ defined in (\ref{prod-torus}). Recall that when $X = \R^4$ the only known Lagrangian tori in $\R^4$ which do not fall into this category are embedded tori that Hamiltonian isotopic to scalings of the Chekanov torus, see \cite{CS10}. 

The Lagrangian tori in a given domain $X$ are described up to Hamiltonian diffeomorphism in $\R^4$ by the (Hamiltonian) shape invariant
\begin{align}\label{dfn-hsi} 
{\rm Sh}_{H}(X) & : = \left\{(r, s) \in \R_{>0}^2 \, \big| \, L(r, s) \hookrightarrow X \right\}.
\end{align}
The study of the shape invariant was initiated by Eliashberg in \cite{Eli91}. Note that ${\rm Sh}_{H}(X)$ contains strictly more information than the possible area classes of embedded Lagrangian tori (which we simply called the shape invariant in \cite{hindzhang}). Indeed the product tori $L(1,2)$ and $L(2,3)$ have integral Maslov $2$ bases with the same area classes, but as stated in Theorem \ref{shapecalc} below, $L(1,2) \hookrightarrow B(3+\ep)$ for any arbitrarily small $\ep>0$ while there is no such embedding from $L(2,3)$.

It is often convenient to work with the reduced (Hamiltonian) shape invariant denoted by ${\rm Sh}_{H}^+(X) := {\rm Sh}_{H}(X) \cap {\{r \leq s\}}$. As examples, the Hamiltonian shape invariants of balls $B^4(R)$ and polydisks $P(c,d)$ were worked out by the first author and Opshtein in \cite{HO19}, and the current authors computed the shape invariant of integral ellipsoids in \cite{hindzhang}.

\begin{theorem}[\cite{HO19, hindzhang}] \label{shapecalc} We have the following computations of the reduced (Hamiltonian) shape invariants. 
\begin{itemize}
\item[(i)] When $X = B^4(R)$, 
\[ {\rm Sh}_{H}^+(B^4(R)) = \left\{ (r,s) \in \R_{>0}^2 \, \bigg| \, r+s <R \,\,\, \mbox{\rm or} \,\,\, r < \frac{a}{2} \right\} \cap \{r \leq s\}. \]
\item[(ii)] When $X = E(a,b)$ with $\frac{b}{a} \in \N_{\geq 2}$, 
\[ {\rm Sh}_{H}^+(E(a,b)) =\left\{ (r,s) \in \R_{>0}^2 \, \bigg| \, \frac{r}{a} + \frac{s}{b} <1 \,\,\, \mbox{\rm or} \,\,\, r < \frac{a}{3} \right\} \cap \{r \leq s\}. \]
\item[(iii)] When $X = P(c,d)$ with $0< c \leq d$, 
\[ {\rm Sh}_{H}^+(P(c,d)) = \left\{ (r,s) \in \R_{>0}^2 \, \bigg| \, \begin{array}{l} r< c\\ s<d \end{array} \,\,\, \mbox{\rm or} \,\,\, r < \frac{c}{2}\right\}\cap \{r \leq s\}. \]
\end{itemize}
\end{theorem}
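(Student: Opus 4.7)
The plan is to prove each of (i), (ii), (iii) by two separate arguments: a \emph{construction} showing that every point of the claimed region is realized by some embedding $L(r,s) \hookrightarrow X$, and an \emph{obstruction} showing that no point outside the region is. The arguments are uniform in character, differing mainly in the capacities involved and in the compactification used.

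For the construction, points $(r,s)$ in the toric region ($r+s<R$ for the ball; $r/a + s/b < 1$ for the integral ellipsoid; $r<c$ and $s<d$ for the polydisk) are immediate, since the standard product torus $L(r,s)$ already lies in the interior of the moment polytope of $X$. For the \emph{non-toric} region ($r<R/2$, respectively $r<a/3$, $r<c/2$), I would use a Chekanov-style Lagrangian suspension: after restricting the first factor to a disk of capacity slightly larger than $r$, one applies an ambient Hamiltonian isotopy of $\C^2$ that winds the $s$-direction around and reinserts a torus Hamiltonian isotopic to $L(r,s)$ with $s$ as large as desired. The sharp constants $R/2$, $a/3$, $c/2$ come from packing considerations specific to each case: the ball and polydisk admit a symmetric two-fold construction in the first factor, while the integral ellipsoid requires a three-fold because of the wedge geometry.

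For the obstruction, suppose $L(r,s) \hookrightarrow X$ and $(r,s)$ lies outside the claimed region. I would compactify $X$ to a closed, possibly orbifold, symplectic manifold $\hat X$: $\C P^2$ of area $R$ for the ball, a weighted projective orbifold for the integral ellipsoid, and $\C P^1 \times \C P^1$ for the polydisk. Neck-stretch along a Weinstein neighborhood of $L$, and through a chosen point $p \in L$ take a finite-energy holomorphic curve in the class of a line (respectively, an orbifold line, or a curve of bidegree $(1,0)$ or $(0,1)$). In the SFT limit one obtains a stable building whose top floor consists of punctured curves in the symplectization of $T^3 = \partial T^*L$, with asymptotics at Reeb orbits carrying Maslov index $2$. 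Matching areas on the two sides, together with the observation that each Maslov-$2$ disk on $L(r,s)$ has symplectic area $r$ or $s$, yields the bound $r+s<R$ (resp.\ $r/a+s/b<1$, $r<c$ and $s<d$); running the same argument with a higher-degree class produces the alternative $r<R/2$ (resp.\ $a/3$, $c/2$).

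The hard step is the SFT compactness analysis: ruling out unwanted bubbling at $L$ and identifying precisely which broken configurations can occur. For the ball this is essentially classical; for the integral ellipsoid the orbifold points make the index computation delicate and are what force the restriction to integer ratio $b/a$; for the polydisk one must work in a product compactification where moduli spaces have higher expected dimension. These technical points are exactly the contributions of \cite{HO19, hindzhang}, and for the present paper I would invoke them as a black box rather than reproduce them.
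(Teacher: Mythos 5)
First, a point of reference: the paper does not prove Theorem \ref{shapecalc} at all --- it is quoted from \cite{HO19,hindzhang}, and the printed statement contains typos. The constants consistent with the rest of the paper (conditions (II-iii) of Theorems \ref{thm-path-lift-ball}--\ref{thm-path-lift-polydisk}, Theorems \ref{knotted-ball}--\ref{knotted-polydisk}, and the discussion of \cite{STV18}) are $r<\tfrac{R}{3}$ for the ball and $r<\tfrac{a}{2}$ for the integral ellipsoid. Your proposal adopts $R/2$ and $a/3$ and, more importantly, justifies them with a heuristic that is exactly backwards: you assert the ball and polydisk need a ``two-fold'' construction and the ellipsoid a ``three-fold.'' In the actual construction (the rolled-up Lagrangian of Theorem \ref{construction}), $L(r,s)$ with $s$ large is squeezed into the toric domain over the quadrilateral $q(r,r)$ with vertices $(0,0)$, $(2r,0)$, $(r,2r)$, $(0,2r)$; fitting this into $\Delta(R,R)$ is governed by the vertex $(r,2r)$ and forces $3r<R$, whereas fitting it into $\Delta(a,ka)$ with $k\ge 2$ or into $\Box(c,d)$ is governed by $(2r,0)$ and forces only $2r<a$, resp. $2r<c$. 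So the ball is the ``three-fold'' case and the ellipsoid/polydisk are the ``two-fold'' cases. Your ``Chekanov-style suspension'' (displace a disk of area $r$ inside one of area $2r+\ep$ and wrap the second factor) is indeed the right mechanism, but without tracking the resulting moment image you cannot recover the sharp constants, and your packing heuristic assigns them to the wrong domains.

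The obstruction half has a logical gap. The shape invariant is a \emph{union} of two regions, so the obstruction must show: if $(r,s)$ lies in \emph{neither} region (e.g.\ $r+s\ge R$ \emph{and} $r\ge R/3$), then $L(r,s)$ does not embed. Your plan --- one neck-stretching argument ``yields the bound $r+s<R$'' and a second, higher-degree argument ``produces the alternative $r<R/2$'' --- would, if each argument were an unconditional necessary condition, prove containment in the \emph{intersection} of the two regions, which is false (and contradicts the construction). What actually happens in \cite{HO19,hindzhang} is a single neck-stretching of a fixed curve class followed by a case analysis of the limiting building; the dichotomy of possible configurations (controlled by area positivity and the index/area bookkeeping of formulas (\ref{f-ind}), (\ref{building-area}), (\ref{building-ind})) is what outputs the disjunction. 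Relatedly, your claim that ``each Maslov-$2$ disk on $L(r,s)$ has symplectic area $r$ or $s$'' is false --- the class $(2,-1)$ is Maslov $2$ with area $2r-s$ --- and ruling out such classes is precisely the hard content you are black-boxing. Since the paper itself only cites these results, invoking \cite{HO19,hindzhang} is acceptable here, but the sketch as written misstates both where the constants come from and how the two alternatives combine.
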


\begin{remark} (i) Note that the subsets ${\rm Sh}_H^+$ of the basic toric domains in Theorem \ref{shapecalc} are all formed, modulo the intersection with $\{r \leq s\}$, by the moment image $\mu(X)$ plus a vertical long strip.

(ii) The proof of statement (ii) in Theorem \ref{shapecalc} utilized some results from K. Siegel \cite{Sieip} which are yet to appear; however the results in the current paper are independent of Theorem \ref{shapecalc}.\end{remark}

\subsection{Hamiltonian knottedness} \label{ssec-ham-knotted} There exists a Hamiltonian diffeomorphism of $\R^4$ mapping $L(r,s)$ onto $L(r', s')$ if and only if $\{r,s\} = \{r', s'\}$, see \cite{Che96}. Hence there is a well defined projection map 
\begin{equation} \label{area-class-map}
\P: {\mathcal L}(X) \to {\rm Sh}_{H}^+(X), \,\,\,\,\mbox{given by} \,\, L \simeq L(r,s) \mapsto \, (r,s).
\end{equation}
This map is continuous. Indeed, by Weinstein's Neighborhood Theorem, a sequence of Lagrangians $L_n \to L$ in $\mathcal L(X)$ can be thought of sections of $T^* L$ or, up to a Hamiltonian diffeomorphism, as sections of the normal bundle $T^* L(r,s)$ of a product torus, where $\P(L)=(r,s)$. Such sections are Hamiltonian isotopic to constant sections, which correspond to $L(r_n,s_n)$, for $(r_n, s_n)$ converging to $(r,s)$.

The Hamiltonian diffeomorphism group ${\rm Ham}(X)$ acts on ${\mathcal L}(X)$ and preserves the fibers of $\P$. Paths in a fiber correspond to Lagrangian isotopies with a fixed area class $(r,s) \in {\rm Sh}_{H}^+(X)$, and these are realized by a Hamiltonian isotopy in $X$, see Theorem 0.4.2 in \cite{chaperon83}. Hence, path connected components of the fibers are precisely the orbits of ${\rm Ham}(X)$, and if a fiber over a point $(r,s)$ happens to be disconnected then we have embeddings $L(r,s) \hookrightarrow X$ which are not Hamiltonian isotopic in $X$. This motives the following definition. 

\begin{dfn} \label{dfn-knotted} Suppose the product Lagrangian torus $L(r,s)$ embeds into $X$ by inclusion, i.e., $(r,s) \in \mu(X)$. Then we call an embedded Lagrangian torus in $X$ {\rm unknotted} if it is in the same component as $L(r,s)$, and {\rm knotted} if it lies in other components. \end{dfn} 

More explicitly, if $L(r,s) \subset X$, then an embedded Lagrangian torus $L \in \P^{-1}((r,s))$ is unknotted if there exists a Hamiltonian isotopy {\bf in} ${\boldsymbol X}$, denoted by $\{\Phi_t\}_{t \in [0,1]}$, such that $\Phi_0 = \mathds{1}_X$ and $\Phi_1(L) = L(r,s)$. Note that, for $(r,s)$ with $r \neq s$, even though all Lagrangian tori in the fiber $\P^{-1}((r,s))$ are conjectured to be Hamiltonian isotopic in $\R^4$, they are not necessarily Hamiltonian isotopic in $X$. In other words, the action of ${\rm Ham}(X)$ on a fiber of $\P$ may not be transitive.  

\begin{remark} The (un)knottedness defined in Definition \ref{dfn-knotted} is identical to that for  
symplectic embeddings discussed in \cite{McD91,McD09,C-G19,GU19}. 
\end{remark}

Our first result says that for some basic domains many fibers of the projection $\P$ are indeed disconnected, so knotted Lagrangian tori are quite common. The following results will be proved in subsection \ref{ssec-thm-knotted-ball-ellipsoid}.

\begin{theorem} \label{knotted-ball}
Let $X=B^4(R)$. Then for any area classes 
\begin{equation} \label{knotted-ball-1} 
(r,s) \in \Delta(R,R) \cap \left\{(r,s) \in {\rm Sh}_H^{+}(X)\, |\, 3r \le R\,\, \mbox{and} \,\,2r + s > R\right\},
\end{equation}
there exist knotted Lagrangian tori in the fiber $\P^{-1}((r,s))$. 
\end{theorem}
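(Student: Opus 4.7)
The plan is to produce two embedded Lagrangian tori $L_0, L_1 \subset B^4(R)$ of the same area class $(r,s)$ lying in distinct path components of the fiber $\P^{-1}((r,s))$. For $L_0$ I take the standard inclusion $L(r,s) = \mu^{-1}((r,s)) \subset B^4(R)$, which is available because $(r,s)\in \Delta(R,R)$ gives $r+s<R$. For $L_1$, I use the hypothesis $3r\le R$: for any sufficiently small $\epsilon>0$ we have $3r+\epsilon\le R$ and $r<(3r+\epsilon)/2$, so Theorem \ref{shapecalc}(i) applied with $R' = 3r+\epsilon$ supplies an embedding $L(r,s)\hookrightarrow B^4(3r+\epsilon)$ via the non-trivial branch of the shape invariant; composing with $B^4(3r+\epsilon)\subset B^4(R)$ gives the ``folded'' Lagrangian $L_1\subset B^4(3r+\epsilon)\subset B^4(R)$ with $\P(L_1)=(r,s)$.

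Next I would show that $L_0$ and $L_1$ cannot be joined by a Lagrangian isotopy inside $B^4(R)$, invoking the path-lifting framework that is the central theme of the paper. A Hamiltonian isotopy in $B^4(R)$ carrying $L_0$ to $L_1$ would project under $\P$ to a loop $\gamma(t)$ in $\mathrm{Sh}_H^+(B^4(R))$ based at $(r,s)$. The strategy is to select a specific test path in the shape invariant -- for instance, sweeping $s$ upward toward $R-r$, so that $r+s$ approaches the diagonal ceiling of $\Delta(R,R)$ -- and to compare the lifts of this path starting from the two candidate endpoints. The standard $L_0$ sits ``near the anti-diagonal boundary'' of $B^4(R)$ and the lift is the obvious product family, while $L_1$ is confined to a proper sub-ball $B^4(3r+\epsilon)$ and its lift must respect that confinement up to Hamiltonian isotopy in $B^4(R)$. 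The assumed isotopy from $L_0$ to $L_1$ would be incompatible with the two lifts agreeing at the endpoint, giving the desired contradiction.

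The hardest step, and the main obstacle, is the lifting obstruction itself: proving rigorously that certain paths in $\mathrm{Sh}_H^+(B^4(R))$ cannot be lifted starting from one of $L_0, L_1$, or lift to inequivalent Lagrangians. I expect this to rest on a $J$-holomorphic curve argument, most plausibly a neck-stretching procedure along a hypothetical Lagrangian isotopy, followed by an area/action accounting. The numerical threshold $2r+s > R$ should correspond precisely to the non-existence of a required holomorphic disk or punctured curve of area $R-(2r+s)$ (or a close variant) in the complement of $L_0$, whereas for $L_1$ the complement $B^4(R)\setminus L_1 \supset B^4(R)\setminus \overline{B^4(3r+\epsilon)}$ has enough residual room to produce the curve; the inequality $3r \le R$ is exactly the quantitative condition making the folded sub-ball $B^4(3r+\epsilon)$ fit. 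Matching this calibration between $2r+s$ and the energy of the limiting curves will be where the argument is most delicate.
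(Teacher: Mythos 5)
Your overall architecture matches the paper's: produce a ``folded'' torus of area class $(r,s)$ confined near the $s$-axis using $3r\le R$, and rule out a Hamiltonian isotopy in $B^4(R)$ back to $L(r,s)$ by a path-lifting argument calibrated by $2r+s$ versus $R$. But the proposal has a genuine gap at exactly the step you flag as hardest: the non-liftability statement is never established, only conjectured. The paper's proof of Theorem \ref{knotted-ball} is short because it quotes Theorem \ref{thm-path-lift-ball}: part (II) (via Theorem \ref{construction} and Corollary \ref{thm-lift}) supplies a lift of a concatenated path that starts in the flexible region $\{2r+s<R\}$, escapes $\Delta(R,R)$ through the strip $\{r<R/3\}$, and returns vertically to $(r,s)$; the terminal torus of that lift is the knotted candidate. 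Part (I) then says the vertical segment $\gamma$ from $(r,s)$ to a point \emph{outside} $\Delta(R,R)$ admits no lift starting at $L(r,s)$, since $2r_t+s_t>R$ along it; if the candidate were unknotted, conjugating by the hypothetical isotopy would produce such a lift. Your sketch of the contradiction is off on both counts: a test path that only ``approaches the diagonal ceiling'' of $\Delta(R,R)$ lifts from $L_0$ by product tori and obstructs nothing (it must cross out of the moment image), and the contradiction is that the path has \emph{no} lift from $L_0$ at all, not that lifts from $L_0$ and $L_1$ ``disagree at the endpoint.''

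Two further concrete issues. First, taking $L_1$ to be the abstract embedding $L(r,s)\hookrightarrow B^4(3r+\epsilon)$ supplied by Theorem \ref{shapecalc}(i) is not enough: to run the argument you must know that this particular torus is the initial point of a lift of the escaping path, i.e.\ that its $s$-class can be pushed past $R-r$ inside $B^4(R)$. The paper arranges this automatically by \emph{defining} the knotted torus as the endpoint of a lift; its rolled-up tori sit in $Q(r,r)$, whose extreme vertex $(r,2r)$ satisfies $r+2r=3r\le R$ --- which is precisely where the hypothesis $3r\le R$ enters. Second, the holomorphic-curve mechanism is not the non-existence of a disk of area $R-(2r+s)$ but a persistence/degeneration analysis of a cylinder in $B^4(R)\setminus L_t$ with positive end on the long Reeb orbit and negative end on $\gamma_{(1,1)}$, of area $R-r_t-s_t$ (Lemma \ref{lem-1}); a degeneration forces a top component without positive end of area at least $r_{t_*}$, whence $r_{t_*}\le R-r_{t_*}-s_{t_*}$, i.e.\ $2r_{t_*}+s_{t_*}\le R$, and an inductive area argument finishes (Lemma \ref{homology_class}, Proposition \ref{prop-area}). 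Your numerology is consistent with this, but none of it is proved in the proposal, so as it stands the argument is a plausible outline rather than a proof.
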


\begin{theorem} \label{knotted-ellipsoid}
Let $X=E(a,b)$ with $k: = \frac{b}{a} \in \N_{\ge 2}$. Then for any area classes 
\begin{equation} \label{knotted-ellipsoid-1}
(r,s) \in \Delta(a,b) \cap \left\{(r,s) \in {\rm Sh}_H^{+}(X)\, |\, 2r \le a\,\, \mbox{and} \,\,(k+1)r + s > b\right\},
\end{equation}
there exist knotted Lagrangian tori in the fiber $\P^{-1}((r,s))$. 
\end{theorem}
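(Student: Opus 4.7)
The plan is to exhibit two embeddings of $L(r,s)$ into $X = E(a,b)$ that are Hamiltonian isotopic in $\R^4$ but lie in different path components of the fiber $\P^{-1}((r,s))$. The first is the inclusion $\iota_0 \colon L(r,s) \subset X$, available since $(r,s) \in \Delta(a,b) = \mu(X)$. The second embedding $\iota_1$ will be produced by lifting a suitable loop in the shape invariant, following the same strategy as in Theorem \ref{knotted-ball}.

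\textbf{Construction of $\iota_1$.} I would choose a loop $\gamma \colon [0,1] \to {\rm Sh}_H^+(\R^4)$ based at $(r,s)$ which leaves $\Delta(a,b)$ and enters the extended strip region $\{r < a/3\} \subset {\rm Sh}_H^+(X)$ permitted by Theorem \ref{shapecalc}(ii), then returns to $(r,s)$ from a different direction. Since the shape invariant of $\R^4$ is unrestricted, the path-lifting theorem for $\R^4$ -- the main technical tool developed in the paper -- produces a lift of $\gamma$ as a smooth family in $\mathcal{L}(\R^4)$. The resulting Hamiltonian isotopy $\Phi_t$ of $\R^4$ defines $\iota_1 := \Phi_1 \circ \iota_0$, which is by construction Hamiltonian isotopic to $\iota_0$ in $\R^4$. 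The hypothesis $2r \leq a$ will be used here to ensure that such a loop $\gamma$ can be chosen with a detour geometry whose $\R^4$-lift terminates inside $X$.

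\textbf{Obstruction to unknotting.} Suppose, toward contradiction, that $\iota_1$ is Hamiltonian isotopic to $\iota_0$ inside $X$. Concatenating this $X$-isotopy with the reverse of the $\R^4$-lift of $\gamma$ yields a lift of $\gamma$ as a loop in $\mathcal{L}(X)$ based at $\iota_0$. The heart of the proof is to show that no such lift in $X$ can exist under the assumption $(k+1)r + s > b$: this is the non-lifting direction of the path-lifting theorem for integral ellipsoids. The obstruction is detected via holomorphic curve methods (e.g., ECH capacities or neck-stretching arguments) in a symplectic completion of $E(a,b)$: a Lagrangian family in $X$ traversing $\gamma$ would, at some intermediate time, force a holomorphic curve or disk configuration whose total symplectic area in the $s$-direction exceeds $b$, violating the capacity of $X$. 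The coefficient $k+1$ reflects contributions of holomorphic curves adapted to the $k$ lattice points along the moment boundary of $E(a, ka)$ together with a base contribution from the Lagrangian cycle.

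\textbf{Main obstacle.} The hardest step is quantifying the non-lifting obstruction with the sharp threshold $(k+1)r + s = b$. One must identify the correct holomorphic curve class (presumably related to ECH generators of $E(a,b)$ in a specific grading), track how the integrality $b/a = k$ enters the count, and match the capacity bound precisely to the inequality in the theorem statement. The analogous argument for the ball in Theorem \ref{knotted-ball} corresponds to the degenerate case $k = 1$ with threshold $2r + s = R$; the integral ellipsoid case is a genuine generalization that requires input specific to the ellipsoid's toric geometry. Once this non-lifting obstruction is established, the structural argument above closes the proof.
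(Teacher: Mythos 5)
Your overall strategy --- produce a second point of the fiber over $(r,s)$ by lifting a detour through the flexible part of the shape invariant, then argue that unknotting it would force a lift of an obstructed path --- is the same as the paper's. But there is a genuine gap in where your lift lives. You construct the detour as a Hamiltonian isotopy $\Phi_t$ of $\R^4$ (invoking a ``path-lifting theorem for $\R^4$'', which the paper does not contain and which would be vacuous anyway, since every path lifts in $\R^4$ by Example \ref{image-path} applied to $\C^2$). The resulting family $\Phi_t(L(r,s))$ is then only a path in $\mathcal L(\R^4)$, and arranging that it \emph{terminates} inside $X$ is not enough: when you assume $\iota_1\sim\iota_0$ in $X$ and concatenate, the intermediate tori of the detour need not lie in $E(a,b)$, so you do not obtain a lift of any path to $\mathcal L(E(a,b))$, and part (I) of Theorem \ref{thm-path-lift-ellipsoid} --- whose proof lives entirely in the cobordism $E(a,b)\setminus U_g^*L_t$ and hence requires $L_t\subset E(a,b)$ for \emph{all} $t$ --- cannot be applied. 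This is precisely the content of part (II) of Theorem \ref{thm-path-lift-ellipsoid} (via Theorem \ref{construction} and Corollary \ref{thm-lift}): the rolled-up tori are confined to $Q(r_t+\ep,s_t+\ep)$ and ultimately to $Q(r_t,r_t)$, and the hypothesis $2r\le a$ enters exactly to guarantee $q(r_t,r_t)\subset\mathrm{int}\,\Delta(a,b)$ along the detour (not, as you suggest, merely to land the endpoint in $X$). Without keeping the entire family inside $E(a,b)$, the knottedness of $\iota_1$ cannot be certified.

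Two smaller points. First, the obstructed path should be a one-way path, e.g.\ the vertical ray $\gamma(t)=(r,(1-t)s+ts_*)$ exiting $\Delta(a,b)$, which stays in $\{(k+1)r_t+s_t\ge b\}$ with monotone ratio $r_t/s_t$ as required by (I); a loop based at $(r,s)$ does not satisfy those hypotheses as a whole, and the argument extracts only the sub-isotopy over the outgoing ray. The paper instead lifts a concatenation $\tilde\gamma\#\gamma$, where $\tilde\gamma$ approaches $(r,s)$ from the flexible region $\{(k+1)r+s<b\}$ inside the strip $r<a/2$; the relevant strip for the \emph{construction} is $r<a/2$ (condition (II-iii)), not the strip $r<a/3$ of ${\rm Sh}_H^+(E(a,b))$. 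Second, the non-lifting statement is a separately proved theorem here, so your ``main obstacle'' paragraph is re-deriving input you are entitled to cite; but the sketch you give of it (total area in the $s$-direction exceeding $b$) does not reflect the actual mechanism, which tracks the positivity of the area $b-kr_t-s_t$ of a cylinder from $\gamma_b$ to $\gamma_{(k,1)}$ through an inductive degeneration analysis.
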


\begin{theorem} \label{knotted-polydisk}
Let $X= P(c,d)$. Then for any area classes 
\begin{equation} \label{knotted-polydisk-1}
(r,s) \in \Box(c,d) \cap \left\{(r,s) \in {\rm Sh}_H^{+}(X)\, |\, 2r \le c\,\, \mbox{and} \,\,r + s > d\right\},
\end{equation} 
there exist knotted Lagrangian tori in the fiber $\P^{-1}((r,s))$. 
\end{theorem}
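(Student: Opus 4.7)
The plan is first to construct the candidate knotted torus. Because $2r \leq c$, the $r < c/2$ branch of the shape invariant of $B^4(c)$ from Theorem \ref{shapecalc}(i) yields a Hamiltonian diffeomorphism $\Phi$ of $\R^4$ with $\Phi(L(r,s)) \subset B^4(c)$; composing with the inclusion $B^4(c) \hookrightarrow P(c,d)$ (valid since $c \leq d$) produces an embedded Lagrangian torus $L' := \Phi(L(r,s)) \subset B^4(c) \subset P(c,d)$ with $\P(L') = (r,s)$. Together with the standard product embedding $L := L(r,s) \subset P(c,d)$, available because $(r,s) \in \Box(c,d)$, this yields two elements of the fiber $\P^{-1}((r,s))$, and the remaining task is to show that they lie in distinct path components.

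The strategy is a path-lifting argument. Consider the ray $\gamma(t) = (r, s+t)$ for $t \geq 0$; the $r < c/2$ branch of Theorem \ref{shapecalc}(iii) guarantees $\gamma(t) \in {\rm Sh}_H^+(P(c,d))$ for every $t \geq 0$, even after $s+t > d$. A lift of $\gamma$ starting at $L'$ is constructed by remaining inside the ball: Theorem \ref{shapecalc}(i) supplies, for each $t$, an embedding $L(r, s+t) \hookrightarrow B^4(c)$, and these can be assembled into a continuous family $\{\Phi_t\}_{t \geq 0}$ of Hamiltonian diffeomorphisms of $\R^4$ with $\Phi_0 = \Phi$, yielding the lift $L_t := \Phi_t(L(r, s+t)) \subset B^4(c) \subset P(c,d)$, defined for all $t \geq 0$ and in particular extending past $t = d - s$. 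By the general principle discussed in Subsection \ref{ssec-ham-knotted}, namely that a Hamiltonian isotopy between $L$ and $L'$ in $P(c,d)$ could be used to transfer path lifts from one starting point to the other, it is enough to prove that no lift of $\gamma$ starting at $L$ can be extended past $t = d - s$.

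The nonliftability from $L$ is the main content, and the plan is to establish it by a $J$-holomorphic disk obstruction in the compactification $P(c,d) \hookrightarrow D(c) \times S^2(d)$. For the product torus $L = L(r,s)$, the ``outer'' second-factor disk class $[D_4] \in H_2(D(c) \times S^2(d), L)$ carries a manifest holomorphic Maslov 2 representative of symplectic area $d - s > 0$; the hypothesis $r + s > d$ translates to $d - s < r \leq c/2$, the smallness that will drive the obstruction. Along a hypothetical lift $L_t$, the symplectic area paired with $[D_4]$ is dictated by the Liouville class and equals $d - (s+t)$, which turns negative once $t > d - s$. The idea is to use Gromov compactness along a continuous family of $\omega$-tame almost complex structures adapted to the family $L_t$ so as to propagate the holomorphic disk in $[D_4]$ forward from $t = 0$; energy positivity of any $J$-holomorphic configuration then forces $t \leq d - s$, yielding the required contradiction. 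The principal technical obstacle I foresee is ruling out degenerations in which a full second-factor sphere of area $d$ bubbles off the limit configuration and thereby resets the area bookkeeping; this I anticipate handling by a neck-stretching at the divisor $D(c) \times \{\infty\}$ in the style of \cite{HO19, hindzhang}.
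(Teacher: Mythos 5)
Your overall architecture coincides with the paper's: exhibit two tori in the fiber over $(r,s)$, show the vertical escaping ray lifts from one but not from the other, and conclude they lie in different components. However, as written there is a genuine gap at the heart of the first half. From Theorem \ref{shapecalc}(i) you only get, for each fixed $t$, \emph{some} Hamiltonian diffeomorphism carrying $L(r,s+t)$ into $B^4(c)$; the assertion that these ``can be assembled into a continuous family $\{\Phi_t\}$'' is precisely the path-lifting problem, and it is the main constructive content of the paper (Theorem \ref{construction} and Corollary \ref{thm-lift}). Indeed, your own argument is premised on the fact that pointwise membership of $\gamma(t)$ in the shape invariant does \emph{not} imply the existence of a continuous family of embeddings, so this step cannot be waved through. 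Worse, the specific requirement that the family stay inside $B^4(c)$ is likely unattainable for part of the range: the paper's rolled-up tori sweep out the quadrilateral $Q(r_t,r_t)$, whose moment image reaches $x+y=3r_t$, so it fits in $B^4(c)$ only when $r<c/3$, whereas the theorem allows $r$ up to $c/2$. The correct statement is that the family stays in $P(c,d)$ (which needs only $2r<c$), and it is obtained by applying (II) of Theorem \ref{thm-path-lift-polydisk} to a concatenation $\tilde\gamma\#\gamma$ where $\tilde\gamma$ runs inside $\Box(c,d)^+$ from the flexible region $\{r'+s'<d,\ r'<c/2\}$ up to $(r,s)$; the lift over the $\gamma$-portion then starts at the rolled-up torus $L'$, which plays the role of your $\Phi(L(r,s))$.

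For the second half, your holomorphic-disk sketch is in the spirit of the paper's obstruction, but ``energy positivity forces $t\le d-s$'' elides the actual difficulty: the moduli space of bidegree-$(0,1)$ curves can degenerate (by breaking along $S^*_gL_t$ into a multilevel SFT building) well before the area $d-(s+t)$ reaches zero, and one must then extract a replacement top-level curve still meeting $\{\infty\}\times S^2(d)$, control its area via the index/area bookkeeping of Lemma \ref{homology_class} and Proposition \ref{prop-area} (this is where the monotonicity of $r_t/s_t$ along the vertical ray enters), and run an induction showing the degenerations terminate. The obstacle is not sphere bubbling so much as this iterated breaking along the Lagrangian. Since the vertical ray $\gamma(t)=(r,s+t)$ satisfies $r_t+s_t>d$ throughout and has monotone ratio $r_t/s_t$, you could simply cite (I) of Theorem \ref{thm-path-lift-polydisk}, which says verbatim that $\gamma$ admits no lift starting at the product torus $L(r,s)$; combined with your (correct) transfer argument via Lemma \ref{matching}, that closes the proof along the paper's lines.
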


Detecting Hamiltonian knotted Lagrangian tori is also closely related to the symplectic embeddings, and sometimes we can detect more knotted Lagrangian tori in both $E(a,b)$ and $B^4(R)$ with the help of symplectic embeddings. We will discuss this in detail in subsection \ref{ssec-symp-emb}. 

\subsection{Path lifting} \label{ssec-path-lifting} The main results in subsection \ref{ssec-ham-knotted} are consequences of a novel analysis of the path lifting problem of the projection $\P$. To start, let us give the following key definition. 

\begin{dfn} \label{dfn-path-lift}
A smooth path $\gamma: [0,T] \to {\rm Sh}_{H}^+(X)$ where $\gamma(0)=(r_0, s_0)$ {\rm lifts to ${\mathcal L}(X)$} if there exists a smooth family of embedded Lagrangian tori in $X$, denoted by $\{L_t\}_{t \in [0,T]}$, with $\P(L_t) = \gamma(t)$ and $L_0 = L(r_0, s_0)$.\end{dfn} 

In other words, unless stated otherwise, we will always assume our lifts start from an inclusion, and so a necessary condition for a lift is that $\gamma(0) \in \mu(X)$. Also, denote by $\mu(X)^+ : = \mu(X) \cap \{r\leq s\}$.

\begin{ex} \label{image-path}  \normalfont For any smooth path $\gamma: [0,T] \to \mu(X)^+ \subset {\rm Sh}_{H}^+(X)$, it lifts to ${\mathcal L}(X)$ since one can consider the family of product Lagrangian tori $\{L(r_t, s_t)\}_{t \in [0,T]}$ with area classes smoothly changing along $\gamma$. \end{ex}

We call any path in Example \ref{image-path} a Type-I path, and any other path in ${\rm Sh}_{H}^+(X)$ starting in $\mu(X)^+$ a Type-II path. We will elaborate on the subtlety of this path lifting from the following three perspectives, with more details given in Section \ref{sec-ex-exotic}.
\begin{itemize}
\item[(1)] ({\it Concatenation}) One can build up a path that lifts to $\mathcal L(X)$ via a series of concatenations of multiple sub-paths in either Type-I or Type-II, see the left picture in Figure \ref{figure-ex-exotic}. 
\item[(2)] ({\it Monodromy}) The path liftings are not unique. The right picture in Figure \ref{figure-ex-exotic} shows that when $X= E(a,b)$, there exist paths $\gamma$ having two lifts $\{L_t\}_{t \in [0,T]}$ and $\{L'_t\}_{t \in [0, T]}$ such that $L_T$ and $L'_T$ lie in different components of the fiber over $\gamma(T)$.
\item[(3)] ({\it Orientation}) Again, the right picture in Figure \ref{figure-ex-exotic} shows that there exist paths with $\gamma(0), \gamma(T) \in \mu(X)$ such that $\gamma$ lifts but its reserve $\overline{\gamma}: = \{\gamma(T-t)\}_{t \in [0,T]}$ does not lift. 
\end{itemize}

Next, we give both necessary and sufficient conditions (unfortunately not quite the same) for paths in the reduced (Hamiltonian) shape invariant of balls, integral ellipsoids, and polydisks to lift. For simplicity, we will state the results only for certain Type-II paths. More general paths can be considered via concatenations mentioned above (see Corollary \ref{thm-lift}).  

\begin{theorem} [Path lifting for $B^4(R)$] \label{thm-path-lift-ball} Let $\gamma = \{\gamma(t)\}_{t \in [0,T]}$ be a path in ${\rm Sh}_H^+(B^4(R))$ with $\gamma(0) \in \mu(B^4(R))^+$ but $\gamma(T) \notin \mu(B^4(R))^+$. Denote $\gamma(t) = (r_t, s_t)$, then we have the following conclusions. 
\begin{itemize}
\item[(I)] If $\frac{r_t}{s_t}$ is non-decreasing and $2r_t + s_t \geq R$ for all $t \in [0, T]$, then $\gamma$ does {\rm not} lift to ${\mathcal L}(B^4(R))$.
\item[(II)] The path $\gamma$ does lift to ${\mathcal L}(B^4(R))$ if there exists a $t_*$ with $0 \le t_* \le T$ satisfying 
\begin{itemize}
\item[(II-i)] $\gamma|_{[0, t_*]} \in \mu(B^4(R))^+$,
\item[(II-ii)] $2r_{t_*} + s_{t_*} <R$, 
\item[(II-iii)] $0< r_t < \frac{R}{3}$ for any $t \in [t_*,T]$.
\end{itemize}
\end{itemize}
\end{theorem}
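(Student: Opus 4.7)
The two halves of the theorem require separate techniques; I sketch each in turn.

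For part (I), I would argue by contradiction using a one-parameter family of $J$-holomorphic curves, in the spirit of the knottedness obstruction behind Theorem \ref{knotted-ball}. Suppose a smooth lift $\{L_t\}_{t \in [0,T]}$ exists. Compactify $B^4(R)$ inside $\C P^2$, choose a family of compatible almost complex structures $J_t$, and study a moduli space $\mathcal M_t$ of $J_t$-holomorphic curves with boundary on $L_t$ in a relative homology class $A$ selected so that (a) $A$ has Maslov index $2$, (b) its $\omega$-area is a linear combination forced to be $\le 0$ under the hypothesis $2r_t + s_t \ge R$, and (c) $A$-curves exist on the product torus $L(r_0, s_0)$ (e.g. from the standard toric holomorphic model). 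The monotonicity hypothesis on $r_t/s_t$ is crucial: it is used to guarantee that along the family $A$ remains the area-minimising Maslov-$2$ class with its particular boundary homology, ruling out bubbling into a competitor class (obtained, for instance, by swapping the roles of the $\alpha$ and $\beta$ generators) during a one-parameter Gromov compactness argument. Persistence of a curve in $\mathcal M_T$ then contradicts the positivity of $\omega$-area.

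For part (II), I concatenate three partial lifts. On $[0, t_*]$ take $L_t = L(r_t, s_t)$ as product tori, permissible by (II-i). Using (II-iii), choose an integral ellipsoid $E(a,b)$ with $b/a \in \N_{\ge 2}$, $3 \max_{t \in [t_*,T]} r_t < a \le b$, and $b$ slightly less than $R$, so that the standard inclusion embeds $E(a,b) \subset B^4(b) \subset B^4(R)$, and $s_t < b$ for every $t \in [t_*, T]$. Since $r_t < a/3$, Theorem \ref{shapecalc}(ii) places $(r_t, s_t)$ in the ``vertical strip'' portion of $\mathrm{Sh}_H^+(E(a,b))$; applying the analogous path-lifting result for integral ellipsoids (a companion theorem proved in parallel in the paper) yields a smooth family $\{\tilde L_t\}_{t \in [t_*, T]}$ inside $E(a,b) \subset B^4(R)$ with $\P(\tilde L_t) = (r_t, s_t)$.

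To splice the two partial lifts at $t = t_*$, I invoke condition (II-ii): $(r_{t_*}, s_{t_*})$ sits strictly below the wall $2r + s = R$ of Theorem \ref{knotted-ball}, so the holomorphic-curve obstruction to unknottedness there does not fire. All Lagrangians in $\P^{-1}((r_{t_*}, s_{t_*}))$ should then lie in a single ${\rm Ham}(B^4(R))$-orbit; constructively, one may deform the folded Lagrangian $\tilde L_{t_*}$ back into the product torus $L(r_{t_*}, s_{t_*})$ inside $B^4(R)$ using the area slack $R - 2r_{t_*} - s_{t_*} > 0$. Inserting the resulting ambient Hamiltonian isotopy at the splicing time, after a smooth reparameterisation, yields the global lift.

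The principal obstacle is part (I): identifying the correct relative class $A$ and controlling $\mathcal M_t$ along the whole deformation. The non-decreasing ratio $r_t/s_t$ must enter precisely to keep $A$ as the area-minimising Maslov-$2$ class, barring bubbling into a class where $\alpha$ and $\beta$ play reversed roles; otherwise persistence of the curve cannot be concluded and the obstruction fails. Verifying this uniform area-minimality along the family, most likely via a neck-stretching argument along $L_t$ as in \cite{hindzhang}, is where the technical heart of the proof lies.
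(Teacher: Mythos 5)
There are genuine gaps in both halves.

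For part (I), your setup is internally inconsistent and misses the actual difficulty. You ask for a relative class $A$ that is Maslov $2$, has area forced $\le 0$ whenever $2r_t+s_t\ge R$, and is represented by holomorphic curves at $t=0$; but a class of non-positive area has no holomorphic representatives at any $t$, so (b) and (c) contradict each other (and in the standard basis, the class of area $R-2r-s$ has Maslov index $0$, not $2$). The paper's curve is not a closed-up disk at all but an SFT cylinder in $B^4(R)\setminus U^*_gL_t$ with one positive end on the long Reeb orbit of $\partial B^4(R)$ and one negative end of type $(1,1)$ (Lemma \ref{lem-1}); its area $R-r_t-s_t$ is \emph{positive} inside the moment image, and the contradiction at $t=T$ is that positivity of area would force $\gamma(T)\in\mu(B^4(R))^+$. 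Consequently the moduli space \emph{must} degenerate, and the heart of the proof is precisely the bubbling you hope to rule out: one extracts from the limit building a top-level curve still carrying the positive end (here is where $2r_{t_*}+s_{t_*}\ge R$ is actually used, to exclude the alternative that the top curve has area $\ge r_{t_*}$ with no positive end), shows via the index/Euler-characteristic bookkeeping of Lemma \ref{homology_class} and the monotonicity of $r_t/s_t$ (Proposition \ref{prop-area}) that its area does not exceed that of the original class, and then iterates, with termination guaranteed by an area quantum obtained from Fukaya's trick and a nearby rational torus. None of this iterative structure appears in your sketch, and the role you assign to the ratio hypothesis ("keeping $A$ area-minimising") is not the role it plays.

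For part (II), the essential content is an explicit construction of embedded Lagrangian tori realizing area classes \emph{outside} the moment image, and you do not supply one. Deferring the segment $[t_*,T]$ to "the analogous path-lifting result for integral ellipsoids" is circular, since that statement is proved by the very same construction (the rolled-up tori of Theorem \ref{construction} and the criterion of Corollary \ref{thm-lift}) that is needed here; moreover your reduction does not even verify the ellipsoid theorem's hypothesis (II-ii) at $t_*$ — with $b$ slightly less than $R$ and $s_{t_*}$ close to $R-2r_{t_*}$, the inequality $(k+1)r_{t_*}+s_{t_*}<b$ can fail. Finally, the splicing step cannot rest on "the obstruction does not fire": absence of a known obstruction does not yield a Hamiltonian isotopy. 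The paper gets the isotopy for free because Theorem \ref{construction} produces the rolled-up torus \emph{together with} a Hamiltonian isotopy from the product torus supported in $Q(r_{t_*}+\ep,s_{t_*}+\ep)$, and condition (II-ii) is exactly what places that support inside $B^4(R)$. Your instinct that the area slack $R-2r_{t_*}-s_{t_*}>0$ is what makes the splice possible is correct, but without the construction it remains an assertion.
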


\begin{theorem} [Path lifting for integral $E(a,b)$] \label{thm-path-lift-ellipsoid} Let $\gamma = \{\gamma(t)\}_{t \in [0,T]}$ be a path in ${\rm Sh}_H^+(E(a,b))$ with $k: = \frac{b}{a} \in \N_{\geq 2}$,  $\gamma(0) \in \mu(E(a,b))^+$ but $\gamma(T) \notin \mu(E(a,b))^+$. Denote $\gamma(t) = (r_t, s_t)$, then we have the following conclusions. 
\begin{itemize}
\item[(I)] If $\frac{r_t}{s_t}$ is nondecreasing and $(k+1)r_t + s_t \geq  b$ for all $t \in [0,T]$, then $\gamma$ does {\rm not} lift to ${\mathcal L}(E(a,b))$.
\item[(II)] The path $\gamma$ does lift to ${\mathcal L}(E(a,b))$ if there exists a $t_*$ with $0 \le t_* \le T$ satisfying 
\begin{itemize}
\item[(II-i)] $\gamma|_{[0, t_*]} \in \mu(E(a,b))^+$,
\item[(II-ii)] either one of the following conditions holds,
\begin{itemize}
\item[(II-ii-1)] $(k-1) r_{t_*} \leq s_{t_*}$ and $(k+1)r_{t_*} + s_{t_*} < b$;
\item[(II-ii-2)] $(k-1) r_{t_*} > s_{t_*}$ and $0<r_{t_*}<\frac{a}{2}$; 
\end{itemize}
\item[(II-iii)] $0<r_{t}<\frac{a}{2}$ for all $t \in [t_*,T]$.
\end{itemize}
\end{itemize}
\end{theorem}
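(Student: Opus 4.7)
The proof splits into an obstruction part (I) and a construction part (II), paralleling the ball case in Theorem \ref{thm-path-lift-ball} with the ellipsoid-specific line $(k+1)r + s = b$ playing the role of $2r + s = R$.

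For part (I), I would argue by contradiction: suppose a smooth lift $\{L_t\}_{t \in [0,T]}$ exists in $E(a,b)$. The plan is to produce, for each $t$, a family of $J_t$-holomorphic curves with boundary on $L_t$, in a fixed relative homology class of Maslov index $2$, whose symplectic area evaluates to $b - (k+1)r_t - s_t$; positivity of area then immediately contradicts the hypothesis $(k+1)r_t + s_t \geq b$. To obtain such curves I would work in a symplectic compactification of $E(a,b)$ and carry out a neck-stretching argument along the boundary of a slightly shrunken copy of $E(a,b)$. For the initial product torus $L_0 = L(r_0, s_0)$, existence of such a curve reduces to the standard enumeration of Maslov $2$ discs through a product torus in an integral ellipsoid. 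Persistence to all $t \in [0,T]$ then follows from Gromov/SFT compactness, provided the relevant moduli space does not degenerate. The hypothesis that $r_t / s_t$ is non-decreasing is precisely what pins down the chosen class as the minimal-area Maslov $2$ class throughout the isotopy, ruling out bubbling into configurations of strictly smaller area.

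For part (II), I would construct the lift piecewise using $t_*$ as the pivot. On $[0, t_*]$, since $\gamma|_{[0, t_*]} \subset \mu(E(a,b))^+$, the lift is by product tori as in Example \ref{image-path}. The essential step is at $t = t_*$: one produces a Hamiltonian isotopy in $E(a,b)$ from $L(r_{t_*}, s_{t_*})$ to an embedded Lagrangian torus that lies inside the image of a symplectic embedding $\Phi : D \hookrightarrow E(a,b)$, where $D$ is an auxiliary toric model whose moment image contains a lift of the tail $\gamma|_{[t_*, T]}$. Subcases (II-ii-1) and (II-ii-2) correspond to two distinct choices of $D$, reflecting the two regions into which $\mathrm{Sh}_H^+(E(a,b))$ decomposes. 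In (II-ii-1), where $(k-1)r_{t_*} \leq s_{t_*}$ and $(k+1) r_{t_*} + s_{t_*} < b$, a perturbation of the standard inclusion into a slightly enlarged ellipsoidal model suffices; in (II-ii-2), where $(k-1)r_{t_*} > s_{t_*}$ and $r_{t_*} < a/2$, one uses a symplectic folding construction of the type developed in \cite{HO19, hindzhang} that realizes the strip $\{r < a/2\}$ appearing in Theorem \ref{shapecalc}. On $[t_*, T]$ one then moves a product torus inside $\Phi(D)$ along the pulled-back path; the hypothesis $r_t < a/2$ throughout is exactly what is needed to keep this tail path inside the accessible region of $D$.

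The principal difficulty will be executing the holomorphic curve persistence argument in part (I). Two technical hurdles arise: first, identifying the correct relative homology class whose area evaluates to $b - (k+1)r_t - s_t$ and setting up the compactification and almost complex structures so that automatic transversality applies; second, ruling out that as $t$ varies the curve degenerates into a nodal or SFT-broken configuration whose total area could remain positive even when $(k+1)r_t + s_t \geq b$. The monotonicity hypothesis on $r_t / s_t$ is engineered precisely to block the second issue by fixing the minimal-area class, but the bookkeeping with Maslov indices and SFT levels will nevertheless be delicate. Part (II), while technically involved, should be a direct adaptation of the folding and embedding constructions already developed for balls and polydisks in \cite{HO19, hindzhang}.
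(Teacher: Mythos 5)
Your outline of Part (II) is broadly the right strategy and close to the paper's: lift by product tori on $[0,t_*]$, then switch to a folding-type (``rolled up'') Lagrangian for the tail, matching the two pieces by a Hamiltonian isotopy supported in $E(a,b)$. One correction: in the paper the two subcases (II-ii-1) and (II-ii-2) do \emph{not} correspond to two different auxiliary models $D$; both come from the single construction of Theorem \ref{construction}, whose isotopy is supported in the toric domain over the quadrilateral $q(r_{t_*},s_{t_*})$, and the dichotomy merely records which vertex of that quadrilateral first obstructs containment in $\Delta(a,b)$ (a slope comparison between $-\frac{r_{t_*}+s_{t_*}}{r_{t_*}}$ and $-k$). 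Your sketch omits the explicit construction and the verification that the whole isotopy stays in the prescribed region, which is the actual content, but the plan is sound.

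Part (I) contains a genuine gap. Your central mechanism is a $J_t$-holomorphic curve class whose area equals $b-(k+1)r_t-s_t$, so that positivity of area contradicts $(k+1)r_t+s_t\geq b$. But under that very hypothesis this quantity is $\leq 0$ already at $t=0$, so no holomorphic representative can exist at the initial product torus and the persistence argument never starts; if such a curve did exist for every $L(r_0,s_0)\subset E(a,b)$ you would be proving the false statement that $(k+1)r_0+s_0<b$ for every product torus included in $E(a,b)$. The curve the paper actually uses (Lemma \ref{lem-1}) is a cylinder from $\gamma_b$ to $\gamma_{k,1}$ of area $b-kr_t-s_t$; positivity of \emph{that} area only says $(r_t,s_t)\in\mu(E(a,b))$, and the contradiction is obtained at the endpoint $t=T$ where $\gamma(T)\notin\mu(E(a,b))^+$. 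The hypothesis $(k+1)r_t+s_t\geq b$ enters elsewhere: when the cylinder degenerates at some $t_*$, it forces the top-level curve in the limit building to retain the positive end $\gamma_b$ (otherwise that curve would have area at least $r_{t_*}$ yet less than $b-kr_{t_*}-s_{t_*}$, giving $(k+1)r_{t_*}+s_{t_*}<b$). The ratio monotonicity is then used not to ``pin down a minimal-area class'' but in an explicit area comparison (Proposition \ref{prop-area}, via Lemma \ref{homology_class}) showing the surviving component's area stays below that of the original cylinder, so it too must degenerate before the path exits the moment image; finally a separate quantization argument (approximating by rational tori) is needed to show the resulting recursion of degenerations terminates. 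You correctly flag degeneration control as the hard point, but the proposal supplies no mechanism for it, and the curve class it is built on cannot exist under the theorem's hypotheses.
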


\begin{theorem} [Path lifting for $P(c,d)$] \label{thm-path-lift-polydisk} Let $\gamma = \{\gamma(t)\}_{t \in [0,T]}$ be a path in ${\rm Sh}_H^+(P(c,d))$ with $\gamma(0) \in \mu(P(c,d))^+$ but $\gamma(T) \notin \mu(P(c,d))^+$. Denote $\gamma(t) = (r_t, s_t)$, then we have the following conclusions. 
\begin{itemize}
\item[(I)] If $\frac{r_t}{s_t}$ is non-decreasing and $r_t + s_t \geq d$ for all $t \in [0, T]$, then $\gamma$ does {\rm not} lift to ${\mathcal L}(P(c,d))$.
\item[(II)] The path $\gamma$ does lift to ${\mathcal L}(P(c,d))$ if there exists a $t_*$ with $0 \le t_* \le T$ satisfying 
\begin{itemize}
\item[(II-i)] $\gamma|_{[0, t_*]} \in \mu(P(c,d))^+$,
\item[(II-ii)] $r_{t_*} + s_{t_*} <d$, 
\item[(II-iii)] $0< r_t < \frac{c}{2}$ for any $t \in [t_*,T]$.
\end{itemize}
\end{itemize}
\end{theorem}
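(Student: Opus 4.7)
My plan mirrors the scheme used for the ball and ellipsoid cases (Theorems \ref{thm-path-lift-ball} and \ref{thm-path-lift-ellipsoid}), adapted to the polydisk via the natural compactification $\bar X := D^2(c) \times S^2(d)$, where $S^2(d)$ is the $2$-sphere of symplectic area $d$ and $P(c,d)$ embeds as $D^2(c)$ times a hemisphere of $S^2(d)$.

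For the obstruction (I), I would argue by contradiction: assume a smooth lift $\{L_t\}_{t\in[0,T]}$ exists, choose a smooth family $\{J_t\}$ of $\omega$-tame almost complex structures on $\bar X$ for which each $L_t$ is totally real, and analyze the moduli space of $J_t$-holomorphic spheres in the fiber class $F:=[\{\mathrm{pt}\}\times S^2(d)]$, each of area $d$ and Chern number $2$. For the standard $J_0$ the fibers foliate $\bar X$, and the fiber through a point of $L_0 = L(r_0,s_0)$ meets $L_0$ in a circle of class $(0,1)\in H_1(L_0;\Z)$, splitting the sphere into Maslov-$2$ disks of areas $s_0$ and $d-s_0$. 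As $t$ varies, Gromov compactness and positivity of intersection ensure that through each point of $L_t$ there is either a smooth $J_t$-sphere or a stable configuration of $J_t$-disks of total Maslov index $4$ and total area $d$; the boundary classes of these disks, viewed in the discrete lattice $H_1(L_t;\Z)\cong\Z^2$, are locally constant in $t$. The hypothesis that $r_t/s_t$ is non-decreasing prevents the monodromy mechanism that would otherwise let these classes rotate into a different primitive during the isotopy, pinning the splitting class of one disk to $(0,-1)$ throughout. Its symplectic area is then $d-s_T$, which is nonpositive since $s_T\ge d$ at $t=T$ (forced by $\gamma(T)\in \mathrm{Sh}_H^+(P(c,d))\setminus \mu(P(c,d))^+$ together with $r_T\leq s_T$), contradicting positivity of symplectic area.

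For the construction (II), on $[0,t_*]$ the lift is simply the family of product tori $L(r_t,s_t)\subset P(c,d)$ as in Example \ref{image-path}, available by (II-i). On $[t_*,T]$, I would construct a smooth family $\{\Phi_t\}$ of Hamiltonian diffeomorphisms of $\R^4$ with $L_t:=\Phi_t(L(r_t,s_t))\subset P(c,d)$, exploiting the assumption $r_t<c/2$ from (II-iii). The key input is the explicit embedding scheme underlying Theorem \ref{shapecalc}(iii): whenever $r<c/2$, the product torus $L(r,s)$ for arbitrary $s$ embeds Hamiltonianly into $P(c,d)$ via a wrapping isotopy that uses the free $c/2$-wide slab of the first-disk factor to \emph{fold} the second-factor loop. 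I would smoothly parametrize such wrappings in $t$, arranging that at $t=t_*$ the wrapping is trivial (permitted by (II-ii): $r_{t_*}+s_{t_*}<d$ makes $L(r_{t_*},s_{t_*})\subset P(c,d)$ a genuine product inclusion) and continuing through the wall $s_t=d$ so that $\Phi_T$ realizes the desired embedding $L_T\subset P(c,d)$.

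The main obstacles are, in (I), the precise tracking of the boundary class of the splitting configurations under the non-decreasing $r_t/s_t$ assumption, combined with the Gromov-compactness and transversality details in the compactification $\bar X$ (whose boundary $\partial D^2(c)\times S^2(d)$ requires care to control curves approaching it); and in (II), the smooth interpolation of the wrapping Hamiltonians from the trivial wrapping at $t_*$ to the nontrivial ones needed for $s_t\ge d$, realized as the flow of a single smooth time-dependent Hamiltonian on $\R^4$.
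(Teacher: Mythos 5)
Your part (II) is essentially the paper's construction: the ``wrapping/folding'' isotopy you describe is exactly the rolled-up Lagrangian of Theorem \ref{construction}, and the interpolation and matching at $t_*$ are handled there by Lemma \ref{matching} and Corollary \ref{thm-lift}; the conditions (II-ii), (II-iii) translate precisely into the quadrilaterals $q(r_{t_*},s_{t_*})$ and $q(r_t,r_t)$ fitting inside $\Box(c,d)$. So for (II) you have the right idea, though you should make explicit that the wrapped torus's moment image stays in $q(r_t,r_t)$ (this is where $r_t<c/2$ is actually used) and that the wrapping is undone by a Hamiltonian isotopy supported in $Q(r_{t_*},s_{t_*})\subset P(c,d)$, which is what (II-ii) buys you.

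For part (I) there is a genuine gap at the central step. You assert that the boundary classes of the disks in a stable splitting of the fiber sphere are ``locally constant in $t$'' and that the monotonicity of $r_t/s_t$ ``prevents the monodromy mechanism,'' pinning one disk to class $(0,-1)$ throughout. The first claim is false as stated: the moduli space of fiber-class curves (equivalently, of the bidegree-$(0,1)$ planes with negative end $\gamma_{(0,1)}$ used in subsection \ref{ssec-path-lift-ball-polydisk}) is in general \emph{not} compact over the family $\{L_t\}$, and at a degeneration the limiting building can shed additional negative ends in various classes $(-m_i^j,-n_i^j)$; the top-level curve one continues with after the degeneration need not have a $(0,-1)$ end at all. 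If the classes really were locally constant, the theorem would follow with no hypothesis on $r_t/s_t$, which already signals that the argument cannot be right in this form. The actual role of the monotonicity hypothesis is not to control any monodromy of boundary classes but to run an \emph{area comparison}: Lemma \ref{homology_class} shows the shed ends satisfy $\sum(m_i^j+n_i^j)\le 0$, and Proposition \ref{prop-area} uses the monotonicity of $r_t/s_t$ to conclude that the linear functional $Mr_t+Ns_t$ recording their contribution cannot change sign along the remainder of the path, so the new top curve's area stays bounded above by $d-s_t$. One then still has to show the resulting chain of degenerations terminates (the paper does this by a rational-approximation argument giving a uniform area drop $\delta>0$ at each step). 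Your sketch contains no substitute for Lemma \ref{homology_class}, Proposition \ref{prop-area}, or the termination argument, and these are the entire technical content of (I); the final area computation $d-s_T\le 0$ is the easy part.
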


\begin{ex} \label{ex-lift} \normalfont (1) The left picture in Figure \ref{figure_ball_lift} shows a path $\gamma_1$ that does not lift to $\mathcal L(B^4(R))$, while the right one shows a path $\gamma_2$ that does lift. This is implied by Theorem \ref{thm-path-lift-ball}. 
\begin{figure}[h]
  \centering
   \includegraphics[scale=0.85]{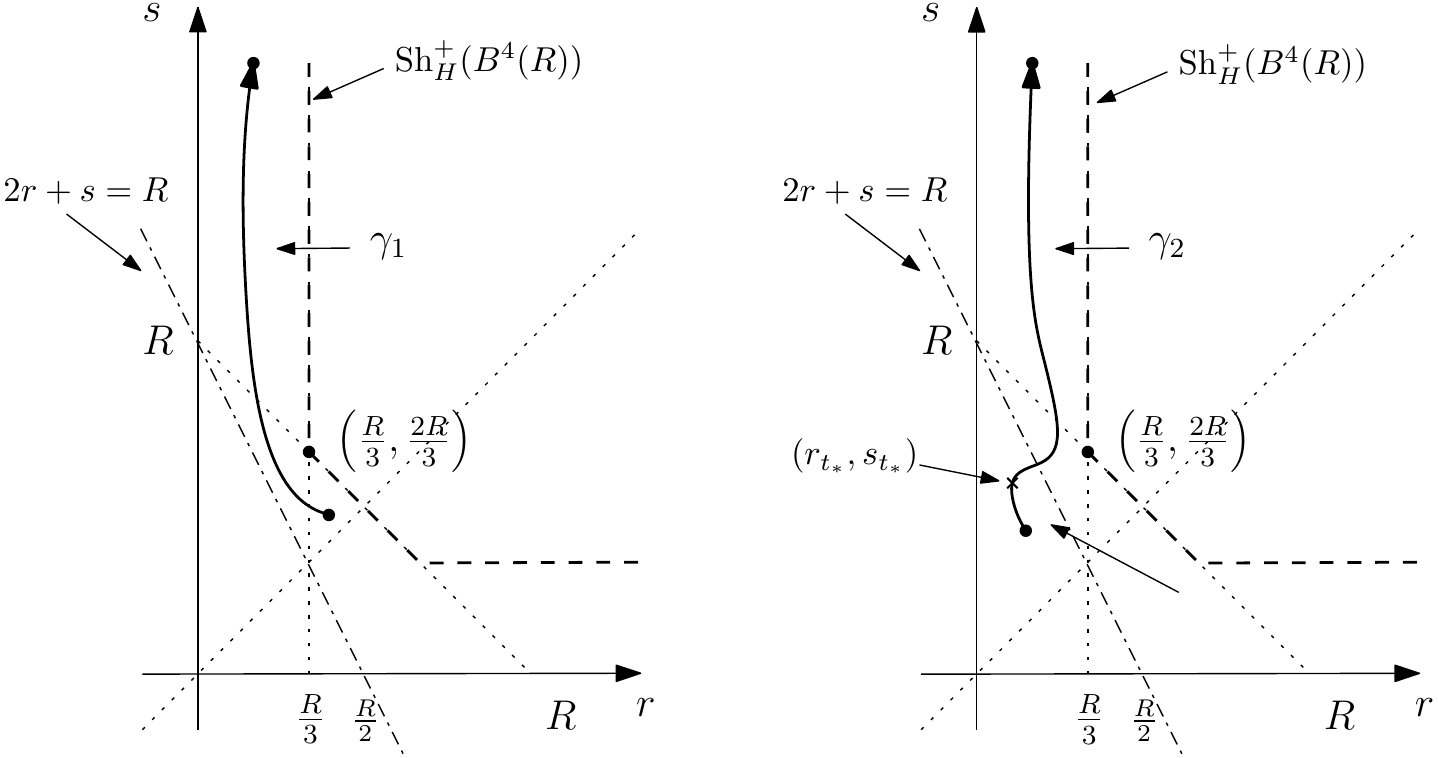} 
     \caption{Path $\gamma_1$ does not lift to $\mathcal L(B^4(R))$ but $\gamma_2$ does lift.} \label{figure_ball_lift}
\end{figure}

(2) The left picture in Figure \ref{figure_ellipsoid_lift} shows a path $\gamma_1$ that does not lift to $\mathcal L(E(a,b))$, while the right one shows a path $\gamma_2$ that does lift. This is implied by Theorem \ref{thm-path-lift-ellipsoid}. In particular, for path $\gamma_2$, the condition (II-ii-2) in Theorem \ref{thm-path-lift-ellipsoid} applies.
\begin{figure}[h]
  \centering
   \includegraphics[scale=0.9]{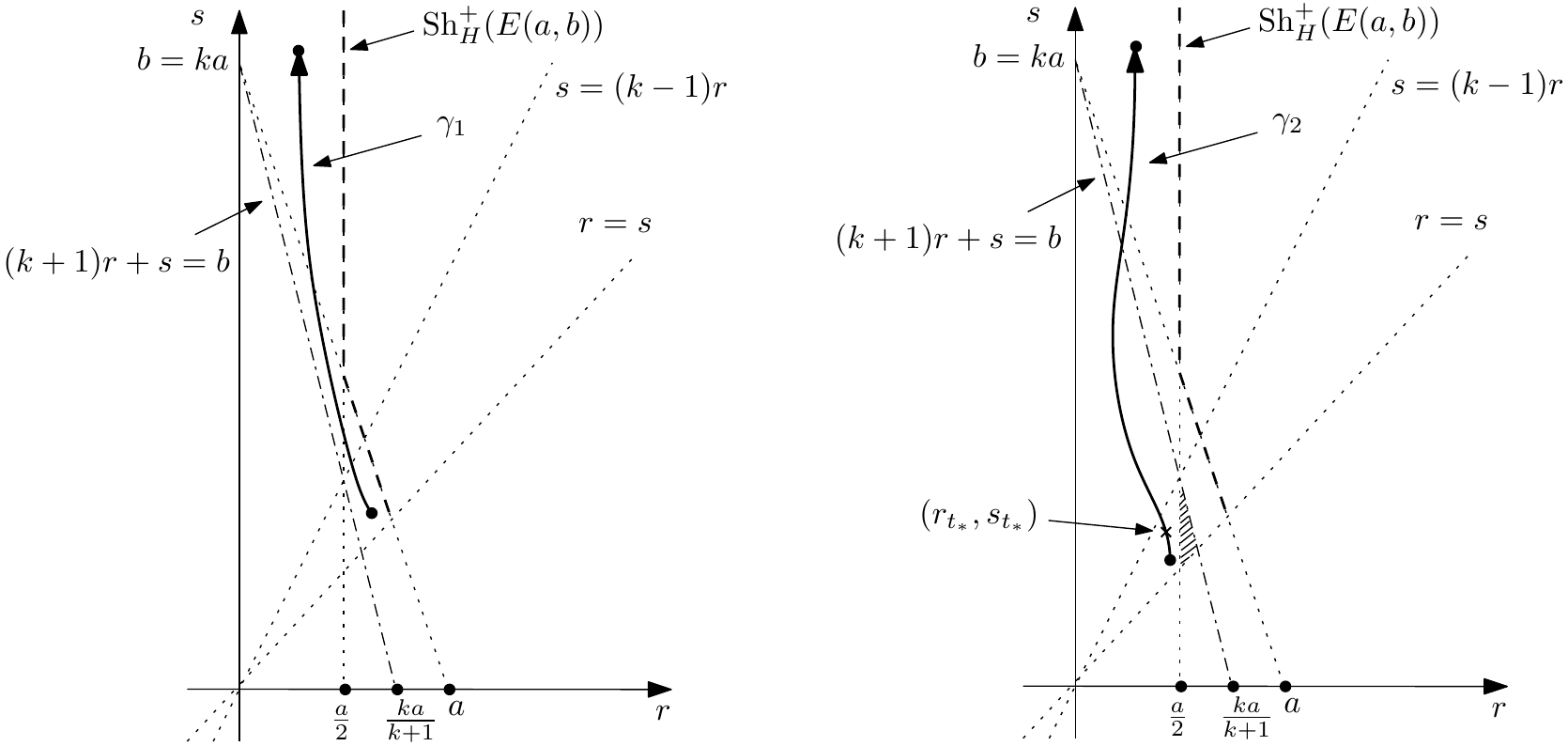} 
     \caption{Path $\gamma_1$ does not lift to $\mathcal L(E(a,b))$ but $\gamma_2$ does lift.} \label{figure_ellipsoid_lift}
\end{figure}

(3) The left picture in Figure \ref{figure_polydisk_lift} shows a path $\gamma_1$ that does not lift to $\mathcal L(P(c,d))$, while the right one shows a path $\gamma_2$ that does lift. This is implied by Theorem \ref{thm-path-lift-polydisk}. 
\begin{figure}[h]
  \centering
   \includegraphics[scale=0.9]{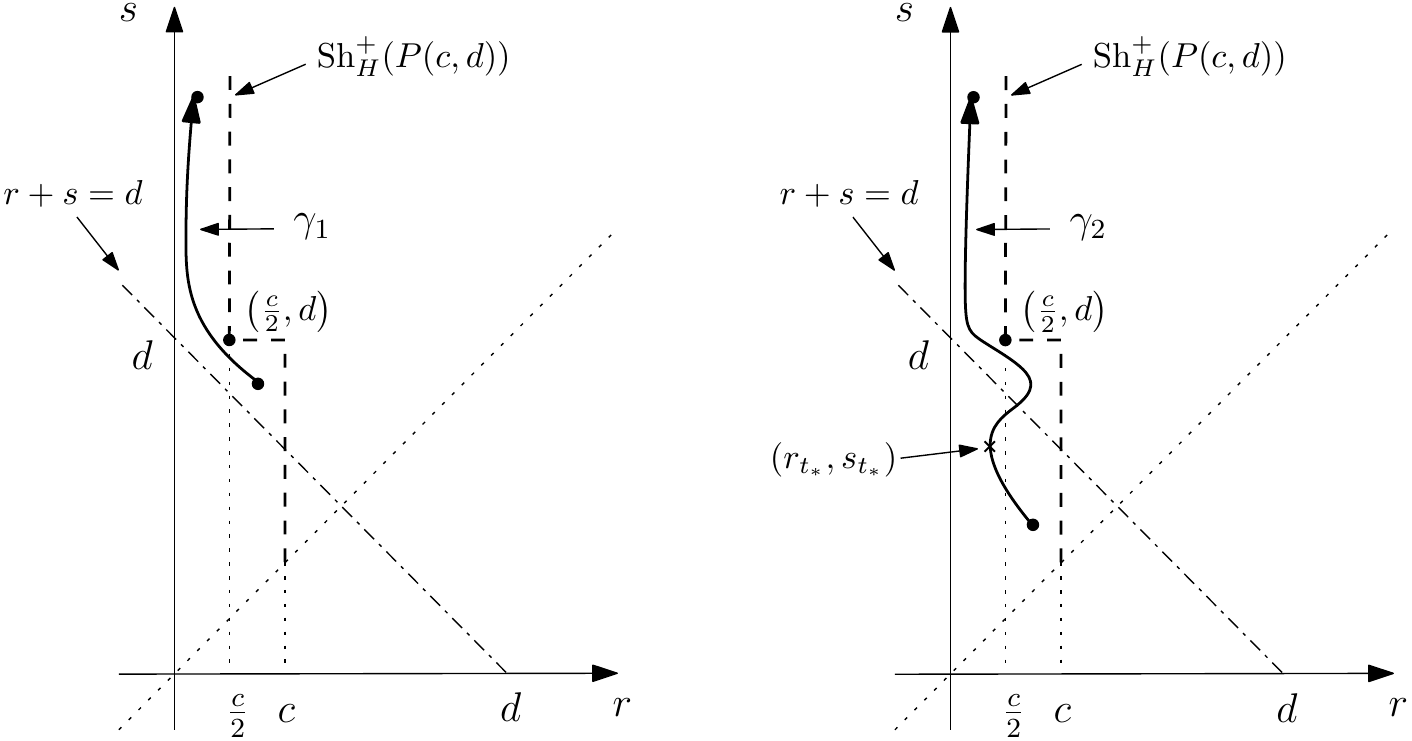} 
     \caption{Path $\gamma_1$ does not lift to $\mathcal L(P(c,d))$ but $\gamma_2$ does lift.} \label{figure_polydisk_lift}
\end{figure}

\end{ex}

\begin{remark} For $\gamma_2$ in Figure \ref{figure_ellipsoid_lift} in Example \ref{ex-lift}, if the starting point is in the shaded region, then Theorem \ref{thm-path-lift-ellipsoid} is not strong enough to determine whether this path lifts or not. \end{remark}
 
The obstructions, (I) in Theorem \ref{thm-path-lift-ball} and Theorem \ref{thm-path-lift-ellipsoid} can be viewed as a single result (where for the case $B^4(R)$, we set $k = 1$) and it will be proved in subsection \ref{ssec-path-lift-ball-ellipsoid}; (I) in Theorem \ref{thm-path-lift-polydisk} has a similar proof, given in subsection \ref{ssec-path-lift-ball-polydisk}. Results (II) in Theorem \ref{thm-path-lift-ball}, \ref{thm-path-lift-ellipsoid}, and \ref{thm-path-lift-polydisk}, which will be proved in subsection \ref{ssec-path-lift-ball-ellipsoid_2}, are consequences of a general path lifting criterion that works for any toric domains in $\R^4$, see Corollary \ref{thm-lift} in subsection \ref{ssec-path-lifting-criterion}. 

\subsection{Symplectic embeddings}\label{ssec-symp-emb}

If there exists a Hamiltonian diffeomorphism $f:X \hookrightarrow Y$ then we have ${\rm Sh}_{H}^+(X) \subset {\rm Sh}_{H}^+(Y)$ (see Proposition 7.1 in \cite{hindzhang}). Given its natural scaling properties we can therefore think of ${\rm Sh}_{H}^+(X)$ as a kind of set-valued symplectic capacity. For a possible relation between this set-valued symplectic capacity with the classical $\R_{\geq 0}$-valued symplectic capacity, see subsection 1.2.1 in \cite{hindzhang}. Some resulting obstructions to symplectic embeddings were explored in Theorem 1.6 in \cite{hindzhang}, however in the case when $X$ and $Y$ are ellipsoids the obstructions turn out to be fairly weak, and are all consequences of Gromov's non-squeezing together with the volume constraint.

Now, analyzing more closely from the path lifting perspective, we observe that if $\gamma:[0,T] \to {\rm Sh}_H^+(X)$ is a path with $\gamma(0) \in \mu(X)^+ \cap \mu(Y)^+$ then, given our symplectic embedding $\phi: X \hookrightarrow Y$, if $\gamma$ lifts as $\{L_t\}_{t \in [0,T]}$ to ${\mathcal L}(X)$ then $\{\phi(L_t)\}_{t \in [0, T]}$ gives a lift to ${\mathcal L}(Y)$, although this lift to ${\mathcal L}(Y)$ may not satisfy our usual initial condition, that is, $\phi(L_0) = L(\mathcal P(\gamma(0)))$, a product torus in $Y$. However, applying a Hamiltonian diffeomorphism of $Y$, the initial condition can be satisfied if $\phi(L_0)$ is unknotted in $Y$. Hence, we produce either examples of knotted Lagrangian tori in $Y$ or potentially stronger embedding obstructions from $X$ to $Y$. We have several consequences in these two directions. 

\subsubsection{Detecting knotted Lagrangian tori}\label{sssec-emb-knotted} The following result provides another approach (cf.~Theorem \ref{knotted-ball}, Theorem \ref{knotted-ellipsoid}, and Theorem \ref{knotted-polydisk}) to detect knotted Lagrangian tori. It will be proved in subsection \ref{obs-domain_1}. 

\begin{theorem} \label{emb-knotted} We can detect knotted Lagrangian tori in the following three cases. 
\begin{itemize}
\item[(1)] Suppose there exists a symplectic embedding $\phi:E(1,x) \hookrightarrow B^4(R)$ for $1<R<x$. If $(r,s) \in \mu(E(1,x))^+ \cap \mu(B^4(R))^+$ with $2r+s >R$, then the embedded Lagrangian torus $\phi(L(r,s))$ is knotted in the fiber $\P^{-1}((r,s))$ of $B^4(R)$. 
\item[(2)] Suppose there exists a symplectic embedding $\phi:E(1,x) \hookrightarrow E(a,b)$ for $1< a < b=ka < x$ with $k \in \N_{\ge 2}$. If $(r,s) \in \mu(E(1,x))^+ \cap \mu(E(a,b))^+$ with $(k+1)r+s > b$, then the embedded Lagrangian torus $\phi(L(r,s))$ is knotted in the fiber $\P^{-1}((r,s))$ of $E(a,b)$.
\item[(3)] Suppose there exists a symplectic embedding $\phi:E(1,x) \hookrightarrow P(c,d)$ for $1< c < d < x$. If $(r,s) \in \mu(E(1,x))^+ \cap \mu(P(c,d))^+$ with $r+s > d$, then the embedded Lagrangian torus $\phi(L(r,s))$ is knotted in the fiber $\P^{-1}((r,s))$ of $P(c,d)$.
\end{itemize}
\end{theorem}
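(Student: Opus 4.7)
The plan is to apply the path-lifting transfer principle sketched in the paragraph preceding the theorem: a symplectic embedding $\phi : X \hookrightarrow Y$ pushes lifts of paths in ${\rm Sh}_H^+(X)$ forward to lifts in $\mathcal{L}(Y)$, and if $\phi(L(r,s))$ were unknotted in $Y$ we could post-compose by a Hamiltonian diffeomorphism of $Y$ to make the pushed-forward lift start at a product torus, contradicting a non-lifting obstruction in $Y$. The three cases share this template, with the obstructing result being Theorem \ref{thm-path-lift-ball}(I), Theorem \ref{thm-path-lift-ellipsoid}(I), and Theorem \ref{thm-path-lift-polydisk}(I) respectively. I sketch case (1); cases (2) and (3) are parallel.

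Suppose for contradiction that $\phi(L(r,s))$ is unknotted in $B^4(R)$. By Definition \ref{dfn-knotted} there is a Hamiltonian diffeomorphism $\Phi$ of $B^4(R)$ with $\Phi(\phi(L(r,s))) = L(r,s)$, the product torus sitting in $B^4(R)$. I would then construct a smooth path $\gamma : [0,T] \to {\rm Sh}_H^+(E(1,x))$ with $\gamma(0) = (r,s)$ and $\gamma(T) \notin \mu(B^4(R))^+$, along which $r_t/s_t$ is non-decreasing and $2r_t + s_t \geq R$ throughout, and which admits a lift $\{L_t\}_{t \in [0,T]}$ in $\mathcal{L}(E(1,x))$ with $L_0 = L(r,s)$. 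Then $\{\Phi(\phi(L_t))\}_{t \in [0,T]}$ is a smooth family in $B^4(R)$ whose area-class projection (preserved by Hamiltonian diffeomorphisms) is $\gamma$, and whose initial element is $\Phi(\phi(L(r,s))) = L(r,s)$. This is a lift of $\gamma$ in $\mathcal{L}(B^4(R))$ starting at a product torus, directly violating Theorem \ref{thm-path-lift-ball}(I); hence $\phi(L(r,s))$ must be knotted.

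The crux is the construction of $\gamma$. Two natural Type-I candidates inside $\mu(E(1,x))^+$ are the scaled ray $\gamma(t) = (1+t)(r,s)$, which keeps $r_t/s_t$ constant and enlarges $2r_t + s_t$, and the horizontal translate $\gamma(t) = (r+t, s)$, which strictly increases both quantities; each lifts trivially in $E(1,x)$ as a family of product tori. The hypothesis $x > R$ ensures that $\mu(E(1,x))^+$ extends past the line $r+s = R$, so the strip $\{r+s \geq R\} \cap \mu(E(1,x))^+$ provides the target region for $\gamma(T)$. A short case analysis on the starting ratio $r/s$ against the corner ratio $(x-R)/(x(R-1))$ of the target strip shows that either the scaled ray reaches $\{r+s \geq R\}$ while remaining in $\mu(E(1,x))^+$, or a concatenation of the two elementary paths does; in tight regimes, the Type-II lifting criterion of Theorem \ref{thm-path-lift-ellipsoid}(II) allows $\gamma$ to be continued through the broader shape-invariant region $\{r < a/2\} \subset {\rm Sh}_H^+(E(1,x))$.

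The main obstacle is verifying this path construction uniformly over every admissible $(r,s)$, a geometric-combinatorial task requiring casework on the size of $r/s$ and the relative sizes of $R$ and $x$. Once $\gamma$ is in hand, the transfer of the lift by $\phi$ and $\Phi$ is purely formal. Cases (2) and (3) proceed identically: substitute $(k+1)r_t + s_t \geq b$ or $r_t + s_t \geq d$ for $2r_t + s_t \geq R$, and use $x > b$ or $x > d$ in place of $x > R$ to furnish the needed room in $\mu(E(1,x))^+$.
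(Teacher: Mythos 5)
Your overall architecture is exactly the paper's: assume $\phi(L(r,s))$ is unknotted, transfer a lift of a suitable path $\gamma$ from $\mathcal L(E(1,x))$ to $\mathcal L(B^4(R))$ via $\Phi\circ\phi$ (this is Proposition \ref{prop-obs-domain}), and contradict obstruction (I). The gap is the one step you defer: the actual construction of $\gamma$. The paper's choice is simply the straight segment from $(r,s)$ toward $(0,x)$. By convexity it stays in $\Delta(1,x)$, so it lifts by product tori (Example \ref{image-path}); along it $2r_t+s_t=(1-t)(2r+s)+tx>R$ since both $2r+s>R$ and $x>R$; and since $x>R$ it eventually leaves $\Delta(R,R)$. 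No casework is needed. Your candidate paths do not work uniformly: for $R=2$, $x=4$, $(r,s)=(0.7,0.8)$ (admissible, since $2r+s=2.2>2$), the scaled ray leaves $\mu(E(1,4))$ at $\lambda=10/9$, before it leaves $\Delta(2,2)$ at $\lambda=4/3$, and at that exit point it also leaves ${\rm Sh}_H^+(E(1,4))$ (since $r=7/9>1/3$), so it cannot be continued; the horizontal translate hits the diagonal at $(0.8,0.8)$, still inside $\Delta(2,2)$, and is likewise trapped. Concatenations of ratio-non-decreasing pieces from such a starting point all fail, so the ``geometric-combinatorial casework'' you flag as the main obstacle is in fact where the construction breaks down.

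The reason you were pushed toward these dead ends is that you took the hypothesis ``$r_t/s_t$ non-decreasing'' in Theorem \ref{thm-path-lift-ball}(I) at face value. The monotonicity actually used in the underlying argument (Proposition \ref{prop-area}, the hypothesis of Theorem \ref{thm-obs-toric}, and the vertical paths in the proof of Theorem \ref{knotted-ball}) is that $r_t/s_t$ is non-\emph{increasing}: the path must rotate toward the $s$-axis, which is the only direction in which one can exit $\mu(B^4(R))^+$ while remaining in the strip $\{r<R/2\}$ of the shape invariant. The segment to $(0,x)$ has exactly this property. A secondary point: your fallback of invoking Theorem \ref{thm-path-lift-ellipsoid}(II) inside $E(1,x)$ is not available as stated, since $E(1,x)$ need not be an integral ellipsoid, and continuing $\gamma$ outside $\mu(E(1,x))^+$ would destroy the product-torus lift you need in the source. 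So: right strategy and right references, but the path you need is essentially the opposite of the ones you propose, and without it the proof is incomplete.
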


We emphasize that the knotted Lagrangian tori produced by Theorem \ref{emb-knotted} do not overlap with the ones produced by Theorem \ref{knotted-ball} or Theorem \ref{knotted-ellipsoid} or Theorem \ref{knotted-polydisk}. Here, we provide examples to support this. 

\begin{ex} \label{emb-knotted-ex}\normalfont (1) Let $\phi: E(1,4) \hookrightarrow B^4(2)$ be a symplectic embedding. We know such an embedding exists, see \cite{MS12}. Any $(r,s)$ in the shaded region in the left picture of Figure \ref{figure_ex_knotted} satisfies the assumption in (1) of Theorem \ref{emb-knotted}. Therefore, $\phi(L(r,s))$ is knotted in the fiber $\P^{-1}((r,s))$ of $B^4(2)$. 

\smallskip

(2) Now let $k \in \N_{\ge 2}$ and consider a symplectic embedding $\phi: E(\frac{ka}{k+1}, (k+1)a) \hookrightarrow E(a,b)$. We verify in Section \ref{app} that such embeddings do exist. Then, by Theorem \ref{emb-knotted} (2) we see that $\phi(L(r,s))$ is knotted in the fiber $\P^{-1}((r,s))$ of $E(a,b)$ provided $(r,s) \in \mu(E(\frac{ka}{k+1}, (k+1)a))^+ \cap \mu(E(a,b))^+$ with $(k+1)r+s > b$, or in other words,
\begin{equation}
(r,s) \in \Delta(a,b) \cap \left\{r \le s \,\, \mbox{and} \,\, (k+1)r + \frac{ks}{k+1} < b \,\, \mbox{and} \,\,(k+1)r + s > b\right\}.
\end{equation}
Comparing to Theorem \ref{knotted-ellipsoid}, this gives additional points in the region $\frac{a}{2} < r < \frac{b}{k+1}$.

\begin{figure}[h]
  \centering
   \includegraphics[scale=0.8]{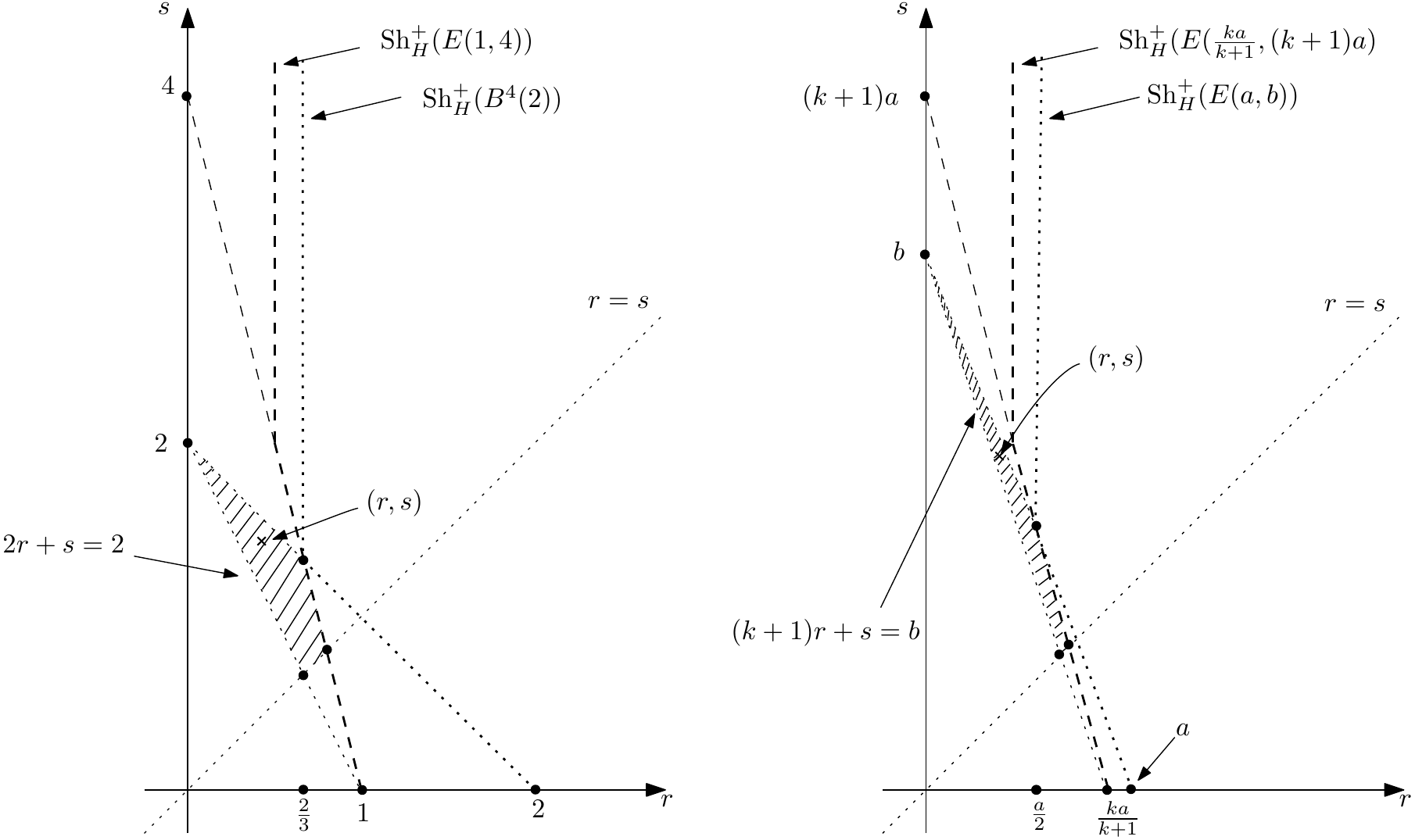} 
     \caption{Knotted Lagrangian tori in shaded regions} \label{figure_ex_knotted}
\end{figure}

\smallskip

(3) Let $\phi: E(1,4) \hookrightarrow P(1, 2)$ be a symplectic embedding. We know such an embedding exists, see \cite{C-GFS17}. Any $(r,s)$ in the shaded region in the left picture of Figure \ref{figure_ex_knotted_polydisk} satisfies the assumption in (3) of Theorem \ref{emb-knotted}. Therefore, $\phi(L(r,s))$ is knotted in the fiber $\P^{-1}((r,s))$ of $P(1, 2)$. 
\begin{figure}[h]
  \centering
   \includegraphics[scale=0.85]{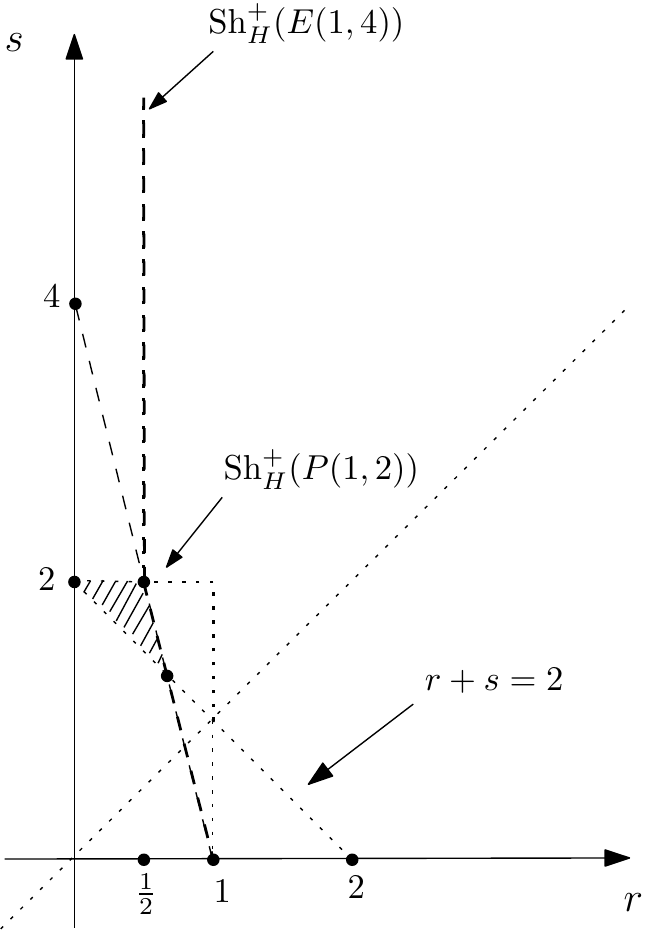} 
     \caption{Knotted Lagrangian tori in shaded regions} \label{figure_ex_knotted_polydisk}
\end{figure}

Both shaded regions in Figure \ref{figure_ex_knotted} and the shaded region in Figure \ref{figure_ex_knotted_polydisk} contain strictly more points than those given by Theorem \ref{knotted-ball}, Theorem \ref{knotted-ellipsoid}, and Theorem \ref{knotted-polydisk}. However, we need to confirm the existence of a symplectic embedding when applying Theorem \ref{emb-knotted}, which is sometimes non-trivial, see Proposition \ref{prop-ellipsoid-emb} in Section \ref{app}. \end{ex}

Here are a few immediate corollaries of Theorem \ref{emb-knotted} which can be contrasted with results on stabilized symplectic embeddings, see Theorem 1.1 in \cite{McD18} or Theorem 1.1 and Theorem 1.3 in \cite{C-GHS21}. Relations between Lagrangian isotopies and stabilized embeddings will be explored elsewhere.

\begin{cor} Denote by $\overline{E(1,x)}$ the closure of the open ellipsoid $E(a,b)$ in $\C^2$. We have the following conclusions. 
\begin{itemize}
\item[(1)] If $\overline{E(1,x)} \hookrightarrow B^4(R)$ and the image of $L(\frac{x}{x+1}, \frac{x}{x+1}) \subset \partial \overline{E(1,x)}$ is unknotted in $B^4(R)$, then $R\geq \frac{3x}{x+1}$.
\item[(2)] If $\overline{E(1,x)} \hookrightarrow E(a,b)$ with $k : = \frac{b}{a} \in \N_{\geq 2}$ and the image of $L(\frac{x}{x+k-1}, \frac{(k-1)x}{x+k-1}) \subset \partial \overline{E(1,x)}$ is unknotted in $E(a,b)$, then $a\geq\frac{2x}{x+k-1}$.
\item[(3)] If $\overline{E(1,x)} \hookrightarrow P(c,d)$ with $k: = \frac{d}{c} \in \R_{>0}$ and the image of $L(\frac{x}{x+2k-1}, \frac{(2k-1)x}{x+2k-1}) \subset \partial \overline{E(1,x)}$ is unknotted in $P(c,d)$, then $c \geq \frac{2x}{x+2k-1}$. 
\end{itemize}
\end{cor}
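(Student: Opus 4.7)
The plan is to argue all three parts by the contrapositive of Theorem \ref{emb-knotted}, approximating the chosen boundary Lagrangian torus of $\overline{E(1,x)}$ from the interior by a one-parameter family of product tori, applying Theorem \ref{emb-knotted} to conclude those approximants are knotted in the receiving domain, and then propagating the knottedness to the boundary limit by a standard Weinstein-neighborhood continuity argument.

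I will write out part (1) in detail. Suppose for contradiction that $R < \frac{3x}{x+1}$; since the hypothesis that $\phi(L_0)$ is unknotted with $L_0 := L(\frac{x}{x+1}, \frac{x}{x+1})$ already forces $L_0 \subset B^4(R)$ and hence $R > \frac{2x}{x+1}$, we are in the regime $\frac{2x}{x+1} < R < \frac{3x}{x+1}$. Set $(r_\varepsilon, s_\varepsilon) := (\frac{x}{x+1} - \varepsilon, \frac{x}{x+1} - \varepsilon)$ and $L_\varepsilon := L(r_\varepsilon, s_\varepsilon)$ for small $\varepsilon > 0$; then $(r_\varepsilon, s_\varepsilon)$ lies in both $\mu(E(1,x))^+$ and $\mu(B^4(R))^+$, while still satisfying $2 r_\varepsilon + s_\varepsilon > R$. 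Applying Theorem \ref{emb-knotted}(1) to the restriction $\phi|_{E(1,x)} \co E(1,x) \hookrightarrow B^4(R)$ shows each $\phi(L_\varepsilon)$ is knotted in the fiber $\P^{-1}((r_\varepsilon, s_\varepsilon))$ of $B^4(R)$.

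On the other hand, if $\phi(L_0)$ is unknotted there is a Hamiltonian isotopy $\{\Phi_t\}$ in $B^4(R)$ with $\Phi_1(\phi(L_0)) = L_0$, and by continuity of $\phi$ in the smooth topology, $\Phi_1(\phi(L_\varepsilon))$ is $C^\infty$-close to $L_0$ for $\varepsilon$ sufficiently small. Using Weinstein's neighborhood theorem to identify a tubular neighborhood of $L_0$ in $B^4(R)$ with a neighborhood of the zero section in $T^* L_0$, any sufficiently close Lagrangian is the graph of a closed one-form and is Hamiltonian-isotopic, within this tubular neighborhood and hence inside $B^4(R)$, to the constant section representing $L(r_\varepsilon, s_\varepsilon)$. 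Concatenating this with $\Phi_t$ exhibits a Hamiltonian isotopy of $B^4(R)$ moving $\phi(L_\varepsilon)$ onto $L(r_\varepsilon, s_\varepsilon)$, forcing $\phi(L_\varepsilon)$ to be unknotted and contradicting the previous paragraph.

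Parts (2) and (3) should follow by the identical scheme with the boundary torus taken to be $L(\frac{x}{x+k-1}, \frac{(k-1)x}{x+k-1})$ and $L(\frac{x}{x+2k-1}, \frac{(2k-1)x}{x+2k-1})$ respectively; each lies on $\partial \Delta(1,x)$ and satisfies $r \leq s$ for the stated range of $k$, and the short computations $(k+1)r_0 + s_0 = \frac{2kx}{x+k-1}$ and $r_0 + s_0 = \frac{2kx}{x+2k-1}$ show that the negated conclusions $a < \frac{2x}{x+k-1}$ and $c < \frac{2x}{x+2k-1}$ translate precisely into the strict inequalities $(k+1)r_0 + s_0 > ka = b$ and $r_0 + s_0 > kc = d$ required by Theorem \ref{emb-knotted}(2), (3). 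I expect the main obstacle throughout to be the Weinstein-neighborhood step at the boundary: one must ensure the final approximating isotopies stay inside the receiving domain, which is handled by taking $\varepsilon$ sufficiently small so that the graph-to-constant-section isotopy occurs in an arbitrarily small neighborhood of $L_0$, sitting in the interior of $X$ by hypothesis.
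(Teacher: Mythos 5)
Your proposal is correct and rests on the same pivot as the paper's proof: both arguments are the contrapositive of Theorem \ref{emb-knotted}. The differences are in the handling of the boundary point. The paper applies Theorem \ref{emb-knotted} directly to $(r_0,s_0)=(\frac{x}{x+1},\frac{x}{x+1})$, which lies on $\partial\mu(\overline{E(1,x)})$ rather than in the open set $\mu(E(1,x))^+$ appearing in the theorem's hypothesis, and it obtains the prerequisite $R\ge\frac{2x}{x+1}$ by comparing the shape invariants of Theorem \ref{shapecalc} along $\{r=s\}$. You instead extract $R>\frac{2x}{x+1}$ from Definition \ref{dfn-knotted} (unknottedness presupposes $(r_0,s_0)\in\mu(B^4(R))$), approximate $(r_0,s_0)$ by interior points $(r_\varepsilon,s_\varepsilon)$ where the theorem applies verbatim, and transfer unknottedness from $\phi(L_0)$ to $\phi(L_\varepsilon)$ via a graph-of-a-closed-one-form isotopy in a Weinstein neighborhood of $L_0$. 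That transfer is the same mechanism the paper itself uses for the continuity of $\P$ and in Lemma \ref{matching}, so it goes through, and it supplies precisely the justification the paper's one-line application at a boundary point elides; what it costs is the extra limiting step. Your computations for (2) and (3) — that the chosen tori lie on $\partial\overline{\Delta(1,x)}$ with $r\le s$, and that the negated conclusions translate into $(k+1)r_0+s_0>b$ and $r_0+s_0>d$ — are correct. The only caveat, shared equally by the paper's proof, is that neither argument verifies the side conditions $R<x$ (resp.\ $b<x$, $d<x$) under which Theorem \ref{emb-knotted} is stated, so this is not a defect of your argument relative to the paper's.
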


\begin{proof} (1) By a direct comparison of the intersections of ${\rm Sh}_H^+(E(1,x))$ and ${\rm Sh}_H^+(B^4(R))$ from Theorem \ref{shapecalc} with $\{r = s\}$, we know $\overline{E(1, x)} \hookrightarrow B^4(R)$ implies that $R \geq \frac{2x}{x+1}$. Then 
\[ \left(\frac{x}{x+1},\frac{x}{x+1}\right) \in \mu(\overline{E(1,x)})^+ \cap \mu(B^4(R))^+. \]
Hence, (1) in Theorem \ref{emb-knotted} implies that $R \geq 2 \cdot \frac{x}{x+1} + \frac{x}{x+1} = \frac{3x}{x+1}$. 

\medskip

(2) Without loss of generality, assume $1 \leq a$ and $x \geq b$. We know ${\rm Sh}_H^+(E(1,x)) \subset {\rm Sh}_H^+(E(a,b))$ due to $\overline{E(1,x)} \hookrightarrow E(a,b)$. Then the condition $L(\frac{x}{x+k-1}, \frac{(k-1)x}{x+k-1}) \subset \partial \overline{E(1,x)}$ implies that $(\frac{x}{x+k-1}, \frac{(k-1)x}{x+k-1}) \in {\rm Sh}_H^+(E(a,b))$. Therefore, 
\[ \left(\frac{x}{x+k-1}, \frac{(k-1)x}{x+k-1} \right) \in \mu^+(\overline{E(1,x)}) \cap \mu^+(E(a,b)) \]
since this point lies on the line $s = (k-1)r$ (cf.~Figure \ref{figure_ellipsoid_lift}). Hence, (2) in Theorem \ref{emb-knotted} implies that $b \geq (k-1) \cdot \frac{x}{x+k-1} + \frac{(k-1)x}{x+k-1} = \frac{2kx}{x+k-1}$. Dividing $k$ on both sides, we obtain the desired conclusion. 

\medskip

(3) Without loss of generality, assume $1\leq c \leq d \leq x$. We know that ${\rm Sh}_H^+(E(1,x)) \subset {\rm Sh}_H^+(P(c,d))$ due to $\overline{E(1,x)} \hookrightarrow P(c,d)$. Then the condition $L(\frac{x}{x+2k-1}, \frac{(2k-1)x}{x+2k-1}) \subset \partial \overline{E(1,x)}$ implies that $(\frac{x}{x+2k-1}, \frac{(2k-1)x}{x+2k-1}) \in {\rm Sh}_H^+(E(a,b))$. Therefore, 
\[ \left(\frac{x}{x+2k-1}, \frac{(2k-1)x}{x+2k-1} \right) \in \mu^+(\overline{E(1,x)}) \cap \mu^+(P(c,d)) \]
since this point lies on the line $s = (2k-1)r$. Hence, (3) in Theorem \ref{emb-knotted} implies that $d \geq \frac{x}{x+2k-1} + \frac{(2k-1)x}{x+2k-1}  = \frac{2kx}{x+2k-1}$. Dividing $k$ on both sides, we obtain the desired conclusion. \end{proof}

\subsubsection{Embedding obstructions}

The obstructions to the symplectic embedding between toric domains are usually given by certain symplectic capacities, for instance, Ekeland-Hofer capacity \cite{EH90}, ECH capacities \cite{Hut11}, Gutt-Hutchings' capacities \cite{GH18}, etc. Almost all of them are constructed via dynamical information, e.g., closed Reeb orbits, on $\partial X$ when it is viewed as a contact manifold with the contact structure induced by the standard primitive of the symplectic structure on $\R^4$.  Until now, the cases that have been studied the most are toric concave domains and toric convex domains. By using the reduced (Hamiltonian) shape invariants, we are able to obtain embedding obstructions for a large family of toric star-shaped domains that are beyond the cases of toric concave or convex (see, e.g., the toric domain from the subset in $\R_{\geq 0}^2$ bounded by the orange curve in Figure \ref{figure_obs_3}). Here is the result, which will be proved in subsection \ref{obs-domain_2}. 

\begin{theorem} \label{thm-obs-toric} Given a toric star-shaped domain $X$ in $\R^4$ and an $E(a,b)$ where $k = \frac{b}{a} \in \N_{\geq 1}$. Suppose $X \not\subset E(a,b)$. If there exists an ellipsoid $E$ satisfying the following conditions:
\begin{itemize}
\item[(i)] $E \subset X \cap E(a,b)$, and $E \not\subset E\left(\frac{ak}{k+1}, b\right)$,
\item[(ii)] there exists an oriented path 
\begin{equation} \label{exist-obt-path}
\gamma = \{(r_t, s_t) \in \R^2 \,| \, r_t \leq s_t\}_{t \in [0,T]} \subset \mu(X) \cap \mu\left(E\left(\frac{ak}{k+1}, b\right)\right)^{c}
\end{equation}
with $(r_0, s_0) \in \mu(E)$, $(r_1, s_1) \notin \mu(E(a,b))$, and the ratio $\frac{r_t}{s_t}$ non-increasing, 
\end{itemize}
then $X$ can not symplectically embed into $E(a,b)$. \end{theorem}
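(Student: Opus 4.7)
The plan is a proof by contradiction using the path-lifting obstruction in Theorem \ref{thm-path-lift-ellipsoid}(I). Suppose $\phi: X \hookrightarrow E(a,b)$ is a symplectic embedding. Since $\gamma \subset \mu(X)$ by condition (ii), it admits the trivial Type-I lift $\{L(r_t,s_t)\}_{t\in[0,T]}$ in $\mathcal L(X)$, and pushing forward yields a smooth family $\{\phi(L(r_t,s_t))\}_{t\in[0,T]}$ of embedded Lagrangian tori in $E(a,b)$ projecting onto $\gamma$ via the area-class map $\P$. Condition (i) gives $L(r_0,s_0) \subset E \subset E(a,b)$, so $(r_0,s_0) \in \mu(E(a,b))^+$, while $(r_T,s_T) \notin \mu(E(a,b))^+$ by (ii); thus $\gamma$ is a Type-II path in ${\rm Sh}_H^+(E(a,b))$.

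Reversing the parameter, set $\tilde\gamma(t) := \gamma(T-t)$. The reversed family $\{\phi(L(r_{T-t},s_{T-t}))\}_{t\in[0,T]}$ projects onto $\tilde\gamma$ and terminates at $\phi(L(r_0,s_0))$. Since $\phi$ sends product tori in $X$ to elements of $\mathcal L(E(a,b))$, the endpoint $\phi(L(r_0,s_0))$ lies in the fiber $\P^{-1}((r_0,s_0))$ of the area-class map in $E(a,b)$, in the same fiber as the standard product torus $L(r_0,s_0) \subset E(a,b)$, although possibly in a different connected component of it (cf.\ the knottedness phenomenon in Theorem \ref{emb-knotted}(2)). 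Along $\tilde\gamma$, the ratio $r_t/s_t$ is non-decreasing (since $\gamma$'s ratio is non-increasing), and $(k+1)r_t + s_t \geq b$ throughout (from $\gamma \subset \mu(E(\frac{ak}{k+1},b))^c$ and $b = ak$).

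These are exactly the conditions of Theorem \ref{thm-path-lift-ellipsoid}(I) applied in a time-reversed, fiber-relaxed form, which forbids any smooth family of embedded Lagrangian tori in $E(a,b)$ projecting onto $\tilde\gamma$ and terminating in $\P^{-1}(\tilde\gamma(T))$. This contradicts the existence of the reversed pushforward family and rules out $\phi$. The case $k=1$ (i.e.\ $E(a,b) = B^4(b)$) proceeds identically with Theorem \ref{thm-path-lift-ball} in place of Theorem \ref{thm-path-lift-ellipsoid}.

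The main subtlety is justifying this adapted version of Theorem \ref{thm-path-lift-ellipsoid}(I): as stated, the theorem concerns a lift starting at the product torus $L(\gamma(0))$, whereas our application needs a smooth family that merely terminates in the correct fiber of $\P$. The proof of (I) should rely only on action and area invariants of the Lagrangians along the family, both of which are preserved under time reversal and constant on fibers of $\P$ up to Hamiltonian isotopy in $\R^4$, so the adaptation is expected to follow by the same techniques as those used in subsection \ref{ssec-path-lift-ball-ellipsoid}. Verifying this adaptation carefully is the hardest part of the argument.
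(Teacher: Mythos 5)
There is a genuine gap, and it sits exactly where you flagged the ``hardest part.'' Your argument never uses condition (i) beyond noting $L(r_0,s_0)\subset E(a,b)$, and instead relies on a ``fiber-relaxed'' version of Theorem \ref{thm-path-lift-ellipsoid}(I) that would obstruct \emph{any} smooth family of tori over $\gamma$ terminating (or starting) anywhere in the fiber $\P^{-1}(\gamma(0))$. That relaxed statement is false: it contradicts the paper's own knottedness results. Indeed, the proofs of Theorems \ref{knotted-ball}--\ref{knotted-polydisk} produce, via the lifting construction of (II), smooth families of embedded tori in $E(a,b)$ projecting onto precisely such obstruction paths; these families exist but start at \emph{knotted} representatives of the fiber. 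The obstruction (I) is not an ``action and area'' statement about arbitrary families --- its proof is anchored by Lemma \ref{lem-1}, which produces the initial holomorphic cylinder with negative end on the honestly included product torus $L(r_0,s_0)\subset E(a,b)$; no such curve is guaranteed if the initial Lagrangian is only known to lie in the same fiber of $\P$. So the missing ingredient is a proof that $\phi(L(r_0,s_0))$ is \emph{unknotted} in $E(a,b)$, and this is exactly what hypothesis (i) is for: since $L(r_0,s_0)\subset E\subset X\cap E(a,b)$, the composition $\phi\circ i:E\to E(a,b)$ is a symplectic embedding of an ellipsoid into an ellipsoid, and by McDuff's connectivity of ${\rm Emb}(E,E(a,b))$ (Corollary 1.6 in \cite{McD09}) it is isotopic to the inclusion; hence $\phi(L(r_0,s_0))$ is Hamiltonian isotopic in $E(a,b)$ to $L(r_0,s_0)$, and Proposition \ref{prop-obs-domain} upgrades the pushed-forward family to a genuine lift in the sense of Definition \ref{dfn-path-lift}, to which (I) applies directly.

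Two smaller points. First, the time reversal does not help: Theorem \ref{thm-path-lift-ellipsoid} requires $\gamma(0)\in\mu(E(a,b))^+$ and $\gamma(T)\notin\mu(E(a,b))^+$, and your reversed path $\tilde\gamma$ starts outside the moment image, so (I) does not apply to it even formally; the intended application is to $\gamma$ itself, once the unknottedness of the initial torus is established. Second, your translation of $\gamma\subset\mu(E(\frac{ak}{k+1},b))^c$ into $(k+1)r_t+s_t\geq b$ is the right reading of that hypothesis, but the remaining work --- which you correctly identified as the crux --- cannot be closed by the route you propose.
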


We illustrate the strength of Theorem \ref{thm-obs-toric} via the following corollaries. They provide obstructions to symplectic embeddings without computing any symplectic capacities. 

\begin{cor} \label{cor-emb-1} Let $E(a,b)$ be a symplectic ellipsoid with $k: = \frac{b}{a}\in \N_{\geq 2}$. If there exists a symplectic embedding $E(1,x) \hookrightarrow E(a,b)$ with $1<a < 1  +\frac{1}{k}$, then $b \geq x$. \end{cor}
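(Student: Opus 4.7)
The plan is to argue by contradiction: suppose $b < x$ and invoke Theorem \ref{thm-obs-toric} with $X = E(1,x)$ to obstruct the assumed embedding. The first observation is that $b < x$ together with $a > 1$ immediately forces $E(1,x) \not\subset E(a,b)$ on the nose, since containment of moment triangles would require $x \leq b$. So the opening hypothesis of Theorem \ref{thm-obs-toric} is in place, and what remains is to exhibit an auxiliary ellipsoid $E$ and a path $\gamma$ satisfying conditions (i) and (ii).

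The natural candidate is $E = E(1,b)$. For condition (i), the containment $E(1,b) \subset E(1,x) \cap E(a,b)$ follows from $b < x$ and $1 < a$ by a direct comparison of moment triangles, while the non-containment $E(1,b) \not\subset E\left(\frac{ak}{k+1}, b\right)$ reduces to $\frac{ak}{k+1} < 1$, which is exactly the hypothesis $a < 1 + \frac{1}{k}$. This is where the numerical restriction on $a$ first enters.

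For condition (ii), I would take a vertical path $\gamma(t) = (r_0, s_t)$ at a fixed small $r_0 > 0$, with $s_t$ increasing from some $s_0$ just below $b(1-r_0)$ to some $s_T$ just below $x(1-r_0)$. Keeping $r$ constant makes $r_t/s_t$ monotonically non-increasing and $r_t \leq s_t$ trivial. The starting point lies in $\mu(E(1,b))$ by construction, and outside $\mu(E(\frac{ak}{k+1}, b))$ as long as $(k+1)r_0 + s_0 \geq b$; this last inequality can be arranged by letting $s_0$ sit close enough to the hypotenuse of $\Delta(1,b)$, and it is available only because $b = ak < k+1$, which is $a < 1 + \frac{1}{k}$ yet again. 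The quantity $(k+1)r_t + s_t$ only grows along the vertical path, so $\gamma$ stays outside $\mu(E(\frac{ak}{k+1}, b))$ throughout. The endpoint lies outside $\mu(E(a,b))$ iff $s_T > b - kr_0$, which is compatible with $s_T < x(1-r_0)$ provided $r_0 < \frac{x-b}{x-k}$; this threshold is positive because $x > b > k$ (the latter using $a > 1$), so all constraints can be met for any sufficiently small $r_0 > 0$.

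The main obstacle is essentially the geometric bookkeeping above: making the vertical window at height $r_0$ simultaneously meet $\mu(E(1,b)) \setminus \mu(E(\frac{ak}{k+1}, b))$ near the start and $\mu(E(1,x)) \setminus \mu(E(a,b))$ near the end while $r_t/s_t$ is monotone. Both windows shrink as the relevant inequalities degenerate, and their nonemptiness uses precisely the two numerical inputs $a < 1 + \frac{1}{k}$ and $b < x$. Feeding this path and $E = E(1,b)$ into Theorem \ref{thm-obs-toric} produces the non-embedding $E(1,x) \not\hookrightarrow E(a,b)$, contradicting the corollary's hypothesis, so $b \geq x$ as claimed.
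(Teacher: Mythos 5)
Your proof is correct and takes essentially the same route as the paper: the paper also argues by contradiction via Theorem \ref{thm-obs-toric} with $X = E(1,x)$, auxiliary ellipsoid $E = E(1,b) = X_{\Delta(1,b)}$, and a path that leaves $\Delta(a,b)$ while staying outside $\Delta\left(\frac{ak}{k+1},b\right)$. The only difference is that the paper simply points to a figure for $\gamma$, whereas you have written out the explicit vertical path and checked all the inequalities (correctly, including the two places where $a<1+\frac{1}{k}$ enters).
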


\begin{proof} Suppose $x>b$, then see the left picture in Figure \ref{figure_emb_ex}. Referring to Theorem \ref{thm-obs-toric} where $X = E(1,x)$, the desired ellipsoid $E= X_{\Delta(1,b)}$ where $\Delta(1,b)$ is the blue triangle with vertices $(0,0)$, $(1,0)$ and $(0, b)$, shown in the picture, and the desired path is the red path $\gamma$ in the picture. Therefore, Theorem \ref{thm-obs-toric} implies the contradiction. \end{proof}

\begin{remark} The result in Corollary \ref{cor-emb-1} can also be derived from {\rm ECH} capacities denoted by $c_k^{\rm ECH}$, assuming the computational fact on the {\rm ECH} capacities of 4-dimensional ellipsoids,  Proposition 1.2 in \cite{Hut11}. Explicitly, suppose $b<x$, then consider the $(k+1)$-th {\rm ECH} capacity. One can verify that $c^{\rm ECH}_{k+1}(E(1,x)) > ak = b = c^{\rm ECH}_{k+1}(E(a,b)).$ \end{remark}

\begin{cor} \label{cor-emb-2} Consider toric domains $B^4(20)$ and $X_{\Omega}$ where the boundary $\partial \Omega \cap \R_{>0}^2$ is piecewise linear with vertices $(0, 24), (2, 17), (0, 19)$, see the right picture in Figure \ref{figure_emb_ex}. Then $X_{\Omega}$ can not symplectically embed into $B^4(20)$.
\begin{figure}[h]
  \centering
   \includegraphics[scale=0.8]{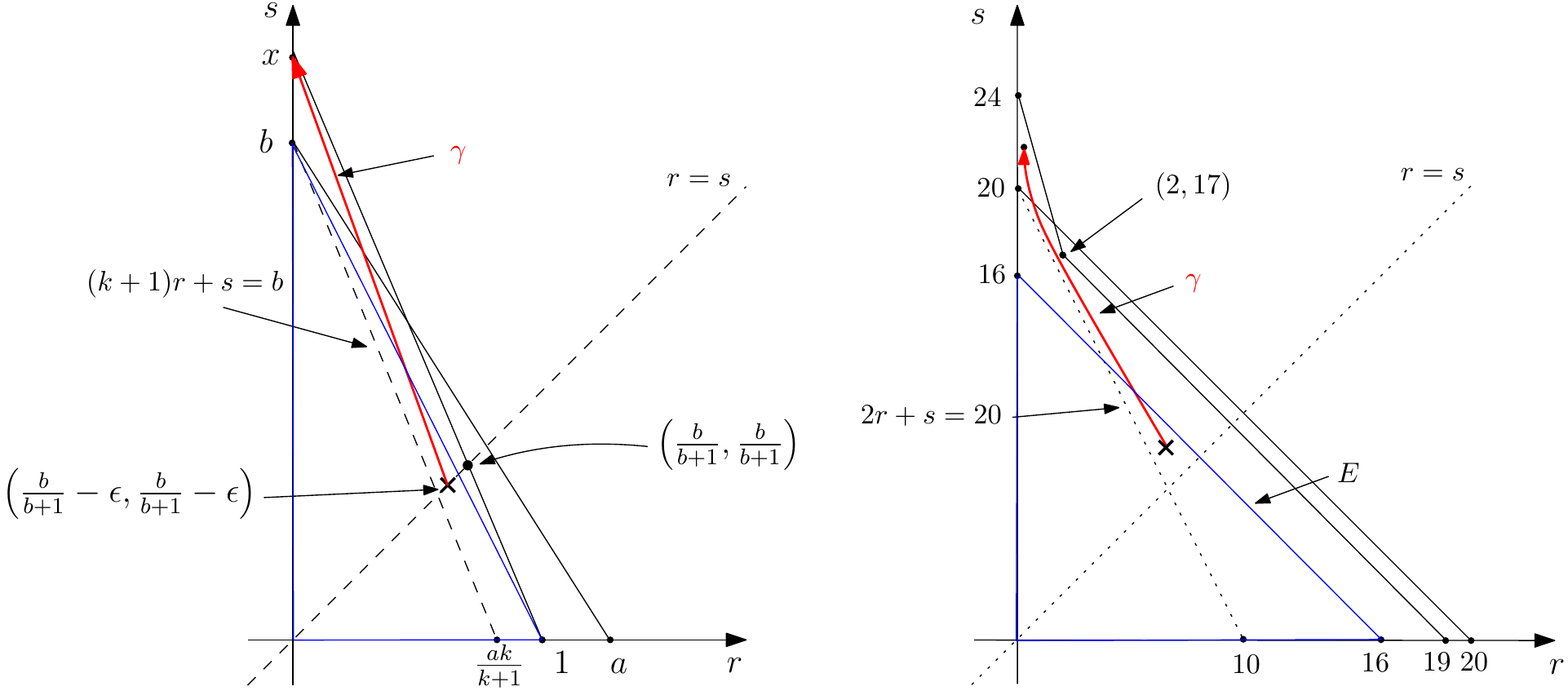} 
     \caption{Embedding obstructions from path lifting} \label{figure_emb_ex}
\end{figure}
\end{cor}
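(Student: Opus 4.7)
The plan is to apply Theorem~\ref{thm-obs-toric} with $X = X_{\Omega}$ and target $E(a,b) = B^4(20) = E(20,20)$, so that $k=1$ and the distinguished ellipsoid of condition (i) is $E\bigl(\tfrac{ak}{k+1},b\bigr) = E(10,20)$, whose moment image is $\{(r,s) \in \R^2_{\ge 0} : 2r + s < 20\}$. The preliminary hypothesis $X_\Omega \not\subset B^4(20)$ is immediate because the vertex $(0,24)$ of $\Omega$ satisfies $0+24 > 20$. What remains is to exhibit an intermediate ellipsoid $E$ and an oriented path $\gamma$ verifying conditions (i) and (ii) of Theorem~\ref{thm-obs-toric}.

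For condition (i), I propose $E = E(11,19)$. Since $\max(11,19) < 20$, the triangle inclusion $\Delta(11,19) \subset \Delta(20,20)$ follows by vertex inspection, whence $E \subset B^4(20)$. For $E \subset X_\Omega$, the hypotenuse $s = 19(1 - r/11)$ of $\Delta(11,19)$ must lie under the piecewise-linear upper boundary of $\Omega$; the only delicate comparison is at the concave vertex $(2,17)$ of $\partial \Omega$, where the hypotenuse attains $19 \cdot 9/11 = 171/11 < 17$, so a linear comparison on each of $[0,2]$ and $[2,11]$ closes the check. Finally, the vertex $(11,0) \in \overline{\Delta(11,19)}$ lies outside $\overline{\Delta(10,20)}$, so $E \not\subset E(10,20)$.

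For condition (ii), I take the affine path $\gamma(t) = \bigl(4(1-t),\ 12.05 + 7.95\,t\bigr)$ for $t \in [0,1]$. Its endpoints are $\gamma(0) = (4, 12.05) \in \mu(E)$ (verified by $4/11 + 12.05/19 < 1$) and $\gamma(1) = (0,20) \notin \Delta(20,20)$. A direct computation gives $2r_t + s_t = 20.05 - 0.05\,t \ge 20$, so $\gamma \subset \mu(E(10,20))^c$. The derivative of $r_t/s_t$ has numerator $(-4)(12.05 + 7.95\,t) - (4(1-t))(7.95) = -80$, so the ratio is strictly decreasing. Containment $\gamma \subset \Omega$ splits at $t = 1/2$ (where $r_t = 2$) into two elementary linear inequalities, one against each slab of $\partial \Omega$. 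Obviously $r_t \le s_t$ throughout. Theorem~\ref{thm-obs-toric} then yields $X_\Omega \not\hookrightarrow B^4(20)$.

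The main delicacy is geometric tightness: the dent at $(2,17)$ lies only narrowly below the secant from $(0,24)$ to $(19,0)$, so the admissible window for $E = E(\alpha, \beta)$ is essentially pinned down by the competing constraints $\alpha > 10$ (to fail containment in $E(10,20)$) and $\beta\,(1 - 2/\alpha) < 17$ (to fit under the dent), together with $\beta \le 20$ (to fit in $B^4(20)$). Once $E$ is chosen inside this window, producing $\gamma$ reduces to linearly interpolating between a point of $\mu(E) \cap \{2r + s \ge 20\}$ and an exit point on the $s$-axis above height $20$; no deeper input beyond Theorem~\ref{thm-obs-toric} is required.
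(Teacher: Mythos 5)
Your overall strategy is exactly the paper's: invoke Theorem~\ref{thm-obs-toric} with $k=1$, so the ``forbidden'' region is the complement of $\mu(E(10,20))=\{2r+s<20\}$, and exhibit an auxiliary ellipsoid $E$ and a path $\gamma$. Your verification of condition (i) with $E=E(11,19)$ is correct (the paper instead uses $E=B^4(16)$, which also works; either choice is fine). The problem is your path in condition (ii). You take $\gamma(t)=(4(1-t),\,12.05+7.95t)$ ending at $(0,20)$. Along this path $r_t+s_t=16.05+3.95t<20$ for every $t<1$, so $\gamma(t)\in\mu(B^4(20))$ for all $t<1$, and the only point at which $\gamma$ ``exits'' the moment image is the terminal point $(0,20)$, which has $r=0$. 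That point is not an area class of a Lagrangian torus and does not lie in ${\rm Sh}_H^+(B^4(20))\subset\R_{>0}^2$; consequently the obstruction that Theorem~\ref{thm-obs-toric} ultimately relies on (Theorem~\ref{thm-path-lift-ball}(I), which requires a path in ${\rm Sh}_H^+(B^4(R))$ genuinely terminating outside $\mu(B^4(R))^+$) never engages. Restricted to $t<1$ your path lifts trivially by product tori inside the ball, so no contradiction is produced: the hypotheses of Theorem~\ref{thm-obs-toric} are met only in a degenerate way that its proof does not cover.

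The gap is repairable, because $\Omega\setminus\overline{\Delta(20,20)}$ is nonempty away from the $s$-axis: on the edge of $\partial\Omega$ from $(0,24)$ to $(2,17)$ one has $r+s=24-2.5r>20$ for $r<1.6$, so e.g.\ $(1,20.4)\in\Omega$ with $r+s=21.4>20$ and $2r+s=22.4>20$. A valid path is the (smoothed) concatenation $(4,12.05)\to(2,16.5)\to(1,20.4)$: one checks on each linear piece that it stays strictly below $\partial\Omega$, that $2r_t+s_t\ge 20$ throughout, that $r_t/s_t$ is strictly decreasing, and that the endpoint lies outside $\Delta(20,20)$ with $r>0$. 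A single straight segment from $(4,12.05)$ to any admissible endpoint does \emph{not} work --- it leaves $\Omega$ near the concave vertex $(2,17)$ --- which is presumably why you were pushed toward the axis; the path must hug the dent. With this replacement your argument is complete and matches the paper's (whose proof simply points at the figure).
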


\begin{proof} By taking $E = B^4(16)$ as shown in the blue triangle in the picture and $\gamma$ as the red path in the picture, Theorem \ref{thm-obs-toric} implies the desired conclusion. \end{proof}

\begin{remark} This embedding obstruction can also be derived from Gutt-Hutchings' capacities denoted by $c_k^{\rm GH}$ and constructed in \cite{GH18}. Explicitly, consider the second Gutt-Hutchings' capacity. Since $X_{\Omega}$ is a toric concave domain, by Theorem 1.14 in \cite{GH18}, one can verify that $c_2^{\rm GH}(X_{\Omega}) = 21 > 20 = c_2^{\rm GH}(B^4(20))$. \end{remark}

\noindent{\bf Discussion.} For both Corollary \ref{cor-emb-1} and Corollary \ref{cor-emb-2}, one can deform the domain $E(1,x)$ or $X_{\Omega}$ to any star-shaped domains as long as the blue ellipsoid $E$ and the red path $\gamma$ exist, then still the embedding obstructions hold by Theorem \ref{thm-obs-toric}. However, the classical symplectic capacities may not apply at all to the deformed domains. Moreover, the following two remarks are particularly interesting. 

\smallskip

(1) In order to obtain the conclusion in (1) in Corollary \ref{cor-emb-1}, it is necessary to apply the obstruction from path lifting (instead of merely comparing the Hamiltonian shape invariants). Indeed, from ${\rm Sh}_H^+(E(1,x)) \subset {\rm Sh}_H^+(E(a,b))$, we only know that 
\[ \frac{a}{2} \geq \frac{1}{2} \,\,\,\,\mbox{and}\,\,\,\, \left(\frac{a}{2}, \frac{b}{2} \right) \,\mbox{lies above the line}\,\, rx+ s = 1.\]
In other words, $a \geq 1$ and $\frac{b}{2} \geq 1 - \frac{a}{2} \cdot x$ (which is $b \geq 2 - ax$). However,  since $x \geq 1$ and $a\geq 1$, we must have $2 - ax \leq x$. In fact, this provides void information, since $x>b \geq 2$ implies $2-ax <0$. 

\medskip

(2) The point $(2,17)$ on $\partial\Omega$ in (2) in Corollary \ref{cor-emb-2} is crucial in the sense that it lies above the line $2r+s = 20$, so there is a sufficiently large space to produce the desired path $\gamma \subset \mu\left(E(10, 20)\right)^{c}$. Curiously, the obstruction $c^{\rm GH}_2(X_{\Omega}) > c^{\rm GH}_2(B^4(20))$ shows exactly this geometric property. It would be interesting to investigate more a accurate relation between the path lifting obstruction and the capacities $c^{\rm GH}_k$.

\subsection{Related work} Both \cite{EGM18} and \cite{STV18} study the star-shape of symplectic manifolds. The star-shape is defined relative to a fixed Lagrangian torus $L_0$ and in our language describes which {\it linear paths} have lifts starting at $L_0$. One of the main results in \cite{EGM18} that relates to our work is a series of computations and estimations (via Poisson bracket invariant in \cite{BEP12}) of the star-shape with different starting Lagrangians $L_0$ in $\C^n$. With more sophisticated algebraic machinery, e.g., Fukaya algebra, \cite{STV18} enhances \cite{EGM18} in various ways. A consequence of the main result in \cite{STV18} is that for toric Fano varieties, the star-shape relative to the monotone Lagrangian fiber coincides with the moment polytope. In the case of $B(R)$ this implies that linear paths starting from $(\frac{R}{3}, \frac{R}{3}) \in \Delta(R,R)$ have lifts only if the path lies completely in the moment image $\Delta(R,R)$. We note that $(\frac{R}{3}, \frac{R}{3})$ lies precisely on the boundary of our ``flexible region'' $\{2r + s < R\}$, so the result matches Theorem \ref{thm-path-lift-ball}. In this case our results cover more general paths, and we give constructions showing the constraints are often sharp. This answers the question raised up in Example 6.1 in \cite{STV18}. Also, our work can answer the fundamental curiosity, Question 1.1 in \cite{EGM18}, at least when the ambient symplectic manifolds are certain basic toric domains in $\R^4$. Finally, we emphasize that our results rely on the non-triviality of certain moduli spaces of holomorphic curves, which are only established for simple domains. 
\medskip

\noindent{\bf Acknowledgements.} This work was completed when the second author holds the CRM-ISM Postdoctoral Research Fellow at CRM, University of Montreal, and the second author thanks this institute for its warm hospitality. The first named author is supported by Simons Foundation Grant no.~633715.

\section{Background and preliminary}

\subsection{Shape invariant} Given an exact symplectic manifold $(M^{2n}, \omega = d\lambda)$ with a fixed primitive $\lambda$, the shape invariant of this $(M, \omega = d\lambda)$, denoted by ${\rm Sh}(M, \lambda)$, is defined as the collection of all possible area classes of embedded Lagrangian tori. More explicitly, for any Lagrangian embedding $\phi: \mathbb T^n \hookrightarrow M^{2n}$, the pullback $\phi^*\lambda$ represents a cohomology class in $H^1(\T^n; \R)$. By choosing an integral basis $e: = (e_1, ...,e_n)$ of $H_1(\T^n; \Z)$, the following evaluation 
\begin{equation} \label{eva}
[\phi^*\lambda](e) = (\lambda(\phi_*(e_1)), ..., \lambda(\phi_*(e_n))) \in \R^{n} 
\end{equation}
induces a map from Lagrangian embeddings to elements in $\R^n$. Of course, the set of values from (\ref{eva}) depends on $\lambda$ and $e$, where a different choice of $\lambda$ results in a uniform shift in $\R^n$ and a different choice of $e$ results in a transformation by an element in ${\rm GL}(n, \Z)$. In particular, the action by ${\rm GL}(n, \Z)$ provides a certain symmetry of ${\rm Sh}(M, \lambda)$. It is Sikorav's work \cite{Sik89} and Eliashberg's work \cite{Eli91} that observe first the application of the shape invariant to the study of the rigidity of symplectic or contact embeddings. For further development in this direction, see \cite{Mul19,RZ20,hindzhang}. 

In this paper, we consider a restrictive version of the shape invariant, called the {\it Hamiltonian shape invariant}, and our $(M^{2n}, \omega = d\lambda) = (X, \omega_{\rm std} = d\lambda_{\rm std})$ for a star-shaped toric domain $X$ in $(\R^4, \omega_{\rm std})$. It is defined in (\ref{dfn-hsi}) above as the collection of all possible area classes $(r,s) \in \R_{>0}^2$ that admit $L(r,s) \hookrightarrow X$, and it is denoted by ${\rm Sh}_H(X)$. Observe that in this set-up, there is no dependence of the primitive (since $X$ is contractible) and we have a canonical choice of the basis $e = (e_1, e_2)$, where $e_1$ is the standard circle in $L(r,s)$, lying in the first $\R^2$-factor of $\R^4$, bounding the disk with area $r$ and $e_2$ is the standard circle in $L(r,s)$, lying in the second $\R^2$-factor of $\R^4$, bounding the disk with area $s$. Then the only symmetry we have is via the reflection $(r,s) \mapsto (s,r)$, which induces a simplified version, the {\it reduced} Hamiltonian shape invariant denoted by ${\rm Sh}_H^+(X) := {\rm Sh}_H(X) \cap \{r \leq s\}$. 

The explicit computations of ${\rm Sh}_H(X)$ as a subset in $\R_{>0}^2$, even for basic toric domains such as balls, ellipsoids and polydisks, were carried out quite recently, see \cite{HO19, hindzhang}. Theorem \ref{shapecalc} in the introduction summarizes the corresponding results. These results are consequences of sophisticated analysis on holomorphic curves via symplectic field theory (SFT). 

A slightly different version of the shape invariant, which is formulated via the flux of a Lagrangian isotopy (in particular applied on closed manifolds such as $\C P^n$ and $S^2 \times S^2$), has been studied in \cite{EGM18} and \cite{STV18}. These works point out the intriguing relations between the shape invariant and Poisson-bracket invariants in \cite{BEP12} and SYZ fibration in \cite{SYZ96}, respectively.  

\subsection{Symplectic field theory} \label{ssec-SFT} Lagrangian embeddings $\phi: L(s,t) \hookrightarrow X$ can be effectively studied via symplectic field theory (SFT). It is a modern machinery, originally formulated in the work \cite{EGH00} and further developed in \cite{BEHWZ03,Hof06,CM18}, that can associate a variety of algebraic invariants to a symplectic cobordism. Topologically, our symplectic cobordism is the complement $W: = X \backslash U_g^*L$ where $L = \phi(L(r,s))$ and $U^*_gL$ is the unit codisk bundle of $L$ with respect to some metric $g$ on $L$; our metrics will always be flat. With a preferred almost complex structure $J$ on $W$, the central objects in SFT are $J$-holomorphic curves $C: (S^2 \backslash\{p_1, ..., p_m\}, j) \to (W, J)$, where the asymptotic ends from punctures $p_i$ correspond to Reeb orbits on $\partial X$ (positive ends) and on $S_g^*L : = \partial U_g^*L$ (negative ends). In fact, the Reeb orbits on $S_g^*L$ can readily be classified (see Proposition 3.1 in \cite{HO19}). More concretely, we say a Reeb orbit on $S_g^*L$ is of the type $(-m,-n)$, denoted by $\gamma_{(m, n)}$, if its projection to $L$ is in the homology class $(-m, -n) \in H^1(\mathbb T^2, \Z)$. Note that for a $J$-holomorphic plane $C: (S^2 \backslash \{p\}, j) \to (W, J)$ with only one {\it negative} asymptotic end on $\gamma_{(-m, -n)}$, by Stoke's theorem, its area is 
\[ {\rm area}(C) = 0 - (r(-m) + s(-n)) = rm + sn (>0). \]

A useful algebraic invariant of $C$ is its Fredholm index. Denote by $\{\gamma_i^+\}_{i=1}^{s_+}$ the collection of positive asymptotic orbits and $\{\gamma_i^-\}_{i=1}^{s_-}$ the collection of negative asymptotic orbits. Without loss of generality, let us assume these Reeb orbits are either non-degenerate or Morse-Bott, that is, they may come in smooth families, and $S_i^+$ and $S_i^-$ are the leaf spaces of the associated Morse-Bott submanifolds. Fix a symplectic trivialization $\tau$ of $C^*TW$ along these Reeb orbits, and $c_1^{\tau}(C^*TW)$ denotes the first Chern number with respect to this trivialization $\tau$. Then 
\begin{align} \label{dfn-f-ind}
{\rm ind}(C) & = (s_+ + s_- - 2) + 2c_1^{\tau}(C^*TW)  \\
& \,\,\,\,\,+ \left(\sum_{i=1}^{s^+} {\rm CZ}^{\tau}(\gamma_i^+) + \frac{\dim S_i^+}{2}\right) - \left(\sum_{i=1}^{s_-} {\rm CZ}^{\tau}(\gamma_i^-) - \frac{\dim S_i^+}{2}\right), \nonumber
\end{align}
where ${\rm CZ}^{\tau}$ is the Robin-Salamon index with respect to $\tau$ (see \cite{RS93}). Note that ${\rm ind}(C)$ is independent of the choice of symplectic trivialization $\tau$. When specifying $\gamma_i^{-} = \gamma_{(-m_i, n_i)}$ for $i \in \{1, ..., s_-\}$ and taking $\tau$ as the complex trivialization of the contact planes induced by complexifying the trivialization of the 2-torus, the index formula (\ref{dfn-f-ind}) can be computed by 
\begin{equation} \label{f-ind}
{\rm ind}(C) = (s_+ + s_- -2) + \left(\sum_{i=1}^{s^+} {\rm CZ}^{\tau}(\gamma_i^+) + \frac{\dim S_i^+}{2}\right) + 2 \sum_{i=1}^{s_-} (m_i + n_i). 
\end{equation}
For more detailed elaboration, see Example 4.1 in \cite{hindzhang}. Sometimes, the toric domain $X$ can be compactified to be a closed symplectic manifold by adding certain curves at infinity. In this paper, we are particularly interested in the case of polydisks $P(c,d)$, where $P(c,d)$ can be compactified to $S^2(c) \times S^2(d)$ with two factors having areas $c$ and $d$, by adding two curves $\{\infty\} \times S^2(b)$ and $S^2(a) \times \{\infty\}$. Then we study $C$ without positive ends, but with a topological invariant given by its intersection with these two curves at infinity. We denote by $(d_1, d_2)$ the bidegree labelling the degrees of intersections. Then the corresponding formula in (\ref{f-ind}) is  
\begin{equation} \label{f-ind-2}
{\rm ind}(C) = (s_- -2) + 4 (d_1 + d_2) + 2 \sum_{i=1}^{s_-} (m_i + n_i) 
\end{equation}

A useful technique in SFT is the neck-stretching, via a sequence of deformations of almost complex structures on the symplectic cobordism $W$. The standard SFT compactness theorem in \cite{BEHWZ03} promises the existence of a limit curve $C_{\rm lim}$, more precisely, a pseudo-holomorphic building consisting of curves in different levels matched in a possibly complicated way. However, the two invariants introduced above, ${\rm area}(C)$ and ${\rm ind}(C)$, converge in the limit and behave in a rather controllable manner. To be precise, if $C_{\rm lim}$ is obtained by gluing different sub-buildings $\{C_i\}_{i =1}^n$ along matching orbits $\{\gamma_i\}_{i=1}^m$, then 
\begin{equation} \label{building-area}
{\rm area}(C_{\rm lim}) = {\rm area}(C_1) + \cdots + {\rm area}(C_n)
\end{equation}
and 
\begin{equation} \label{building-ind}
{\rm ind}(C_{\rm lim}) = \sum_{i=1}^n {\rm ind}(C_i) - \sum_{i=1}^m {\rm dim}\,S_i 
\end{equation}
where $S_i$ is the leaf space of $\gamma_i$ for $i \in \{1, ..., m\}$. For more details, see Proposition 3.3 in \cite{HO19}. In what follows, we will see that the combination of ${\rm area}(C)$ and ${\rm ind}(C)$ actually yields quite strong constraints on the possible configurations of $C_{\rm lim}$. This will be essential to the study of embeddings $L(r,s) \hookrightarrow X$. This scheme was used in both \cite{HO19} and \cite{hindzhang} for the computations of the (Hamiltonian) shape invariant.

\section{Obstructions to path liftings} 

In this section, we will prove the obstruction to a path lifting, that is, (I) in Theorem \ref{thm-path-lift-ball} and Theorem \ref{thm-path-lift-ellipsoid}. This will be divided into several steps. 

\subsection{Preparations} The following lemma, Lemma \ref{lem-1}, guarantees the existence of certain curves that initiate the proof. 

\medskip

Recall that $L(r_0, s_0)$ is the product torus in $\C^2$ with area classes $r_0$ and $s_0$. We assume $L(r_0, s_0) \subset E(a,b)$, that is $kr_0 + s_0 < b$. Recall that $\gamma_{(m,n)}$ denotes the
closed Reeb orbits of type $(-m, -n)$ on the unit co-sphere bundle $S^*_gL(r_0, s_0)$ with respect to a preferred flat metric $g$ on $L(r_0, s_0)$. Since $L(r_0, s_0) \subset E(a,b)$, up to a rescaling of the metric $g$, there exists a sufficiently small neighborhood as a unit codisk bundle $U_g^*L(r_0, s_0)$ sitting inside $E(a,b)$. As discussed in subsection \ref{ssec-SFT}, the complement $E(a,b)\backslash U_g^*L(r_0, s_0)$ is a symplectic cobordism and the deformed complex structure, denoted by $J_0$, gives a compatible almost complex structure with cylindrical ends. Denote by $\gamma_b$ the long closed Reeb orbit on $\partial E(a,b)$ with period $b$. 

\begin{lemma} \label{lem-1} There exists a $J_0$-holomorphic cylinder in the symplectic cobordism $E(a,b)\backslash U_g^*L(r_0, s_0)$ with a positive end on $\gamma_b$, the longer Reeb orbit of $\partial E(a,b)$, and with a negative end on a Reeb orbit $\gamma_{k,1}$ on $S^*_gL(r_0, s_0)$ (where recall $k = \frac{b}{a}$).  \end{lemma}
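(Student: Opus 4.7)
The plan is to realize the desired cylinder as a top-level component of a broken pseudoholomorphic building obtained via neck-stretching, following the SFT compactness framework used in \cite{HO19, hindzhang}.

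First I would set up an initial moduli space $\mathcal M$ of pseudoholomorphic planes in the symplectic completion of $E(a,b)$ with a single positive end on $\gamma_b$. Because $b/a=k\in\N_{\ge 2}$, the orbit $\gamma_b$ lies in a Morse-Bott family (the whole $S^3=\partial E(a,b)$ is foliated by Reeb orbits of common action $b$), so by \eqref{f-ind} the moduli space $\mathcal M$ has positive expected dimension. A concrete model, for the integrable complex structure, is the disk $\{z_1=0\}\cap E(a,b)$ extended by the trivial half-cylinder over $\gamma_b$; varying the asymptotic marker in the leaf space produces deformations that cover a neighborhood of every point of $E(a,b)$. After a small perturbation of $J$ that is cylindrical near $L(r_0,s_0)$, the evaluation at a marked point restricted to $\mathcal M$ hits any prescribed $p\in L(r_0,s_0)$.

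Next, I would stretch the neck along $\partial U_g^*L(r_0,s_0)$ with a continuous family $\{J_\tau\}_{\tau\ge 0}$ of compatible almost complex structures converging to the split one induced by $J_0$ on the cobordism. A continuation argument based on the evaluation map yields a $J_\tau$-holomorphic plane $u_\tau$ through $p$ with positive end at $\gamma_b$ for each $\tau$, and by the SFT compactness theorem \cite{BEHWZ03} a subsequence converges to a pseudoholomorphic building $\mathcal B$ whose levels live respectively in the completion of $E(a,b)\setminus U_g^*L(r_0,s_0)$, the symplectization $\R\times S_g^*L(r_0,s_0)$, and the completion of $U_g^*L(r_0,s_0)\simeq T^*\T^2$. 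The total action equals $b$ by \eqref{building-area}. Because $\pi_2(\T^2)=0$, any component living in the bottom level $T^*\T^2$ has asymptotic classes summing to zero in $H_1(\T^2)=\Z^2$, which rules out bottom-level components with a single Reeb end and severely restricts the remaining possibilities. Combining these topological constraints with the index identity \eqref{building-ind} and the connectedness of $\mathcal B$ forces the unique top-level component $C$ carrying the positive puncture at $\gamma_b$ to be a cylinder with a single negative end at some $\gamma_{(m,n)}$ of action $mr_0+ns_0\le b$.

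The last step, and the main technical hurdle, is to identify $(m,n)=(k,1)$. This is controlled by the relative homology class of $u_0$ in $H_2(E(a,b),L(r_0,s_0))$, which, for the model disk $\{z_1=0\}\cap E(a,b)$, is computed from the intersection with $L$ (a circle of class $(0,1)$) together with the linking number $k$ of $\gamma_b$ with the $(1,0)$-loop on $L$, producing the class $(k,1)$. This relative class is preserved along the neck-stretching and is inherited by the negative end of $C$ via the connectedness analysis. The hypothesis $kr_0+s_0<b$ enters precisely to ensure that $C$ has strictly positive area $b-(kr_0+s_0)>0$, as required for a non-trivial $J_0$-holomorphic curve. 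The delicate aspect is ruling out alternative degenerations in which the $\gamma_b$-end is carried by a more complicated top-level component whose several negative ends sum to $(k,1)$ without one of them individually being $\gamma_{(k,1)}$; this is where the Morse-Bott dimension count from the first step and the area/index bookkeeping must conspire rigidly.
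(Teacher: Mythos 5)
Your overall skeleton (produce a curve, stretch the neck along $S_g^*L(r_0,s_0)$, analyze the limiting building via area, index and the fact that ends on $T^*\T^2$ must sum to zero in $H_1(\T^2)$) matches the paper's strategy, but the proof has a genuine gap exactly where you flag "the main technical hurdle": you never actually rule out the competing configurations, and the homological argument you sketch does not do it. The relative class of the original curve only controls the \emph{sum} of the classes of all negative ends of all top-level components of the building; it cannot by itself force the component carrying $\gamma_b$ to have a single negative end on $\gamma_{(k,1)}$. Concretely, a building whose top level consists of a cylinder asymptotic to an orbit of type $(k+1,0)$ together with $k+1$ planes asymptotic to orbits of type $(-1,0)$ has the same total class, the same total area, and index-one components, so area/index bookkeeping alone does not exclude it. The paper kills this configuration by choosing $J_0$ so that the coordinate axes are complex, using positivity of intersection with the axis planes $P_1,P_2$ to constrain the classes of the accompanying planes, and then invoking Siefring's generalized self-intersection number $P_2\ast P_2=k$ to show the building cannot meet $P_2$ in $k+1$ points. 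Some substitute for this intersection-theoretic input is needed; it is not a routine bookkeeping step.

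There is also a problem with your starting curve. To force the building to interact with $L(r_0,s_0)$ you let the positive asymptotic vary in the Morse--Bott family of action-$b$ orbits so that the plane passes through $p\in L(r_0,s_0)$ (for the integrable structure the plane through $p$ is $\{z_1=c\}$ with $\pi|c|^2=r_0$, asymptotic to a \emph{generic} orbit of the family, not to $\gamma_b$). After stretching, the positive puncture of the limit then lies on some orbit of the family, and you have no mechanism to force it to be the distinguished orbit $\gamma_b$; if instead you fix the asymptotic to be $\gamma_b$, you must separately argue that the constrained moduli space still meets $L(r_0,s_0)$. The paper sidesteps both issues by starting not from planes but from the cylinder of Theorem 5.8 of \cite{hindzhang} with positive end on $\gamma_b$ and negative end on $\beta^{k+1}$ in a thin ellipsoid $E(\ep,\ep S)\subset U_g^*L(r_0,s_0)$, which guarantees both the correct positive asymptotic and a nontrivial interaction with the Weinstein neighborhood of $L$. (See also Remark \ref{rmk-lem-1}, where the desired cylinder is written down explicitly as $\sigma\times\Gamma$ with $\Gamma=\{\theta_1-k\theta_2=0\}$ for a suitable $J_0$.)
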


\begin{proof} There exists a thin and long ellipsoid denoted by $E(\ep, \ep S)$ that can be embedded inside the unit codisk bundle $U_g^*L(r_0, s_0)$. Here, $\ep$ is sufficiently small and $S$ is sufficiently large. Up to symplectomorphism, we have the following inclusions, 
\[ E(\ep, \ep S) \subset U_g^*L(r_0, s_0) \subset E(a,b). \]
Denote by $\beta$ the short closed Reeb orbit of $\partial E(\ep, \ep S)$ with period $\ep$. By an appropriate deformation of the almost complex structure, we can view $E(a,b) \backslash E(\ep, \ep S)$ as a symplectic cobordism with respect to an almost complex structure (still) denoted by $J_0$. By Theorem 5.8 in \cite{hindzhang}, there exists a $J_0$-holomorphic cylinder with positive end $\gamma_b$ and negative end $\beta^{k+1}$. \footnote{Strictly speaking, the statement of Theorem 5.8 in \cite{hindzhang} guarantees the existence of a curve with positive ends more than just $\gamma_b$. However, it is easy to see that the gluing method in the proof of Theorem 5.8 in \cite{hindzhang}, when applying to the cylinder provided by Lemma 5.6 in \cite{hindzhang} and a cylinder, labeled as $C_{\rm HK}$, provided by \cite{HK18-2}, can result in the desired cylinder here.} By a neck-stretching along the boundary $S_g^*L(r_0, s_0)$, the limiting building $C_{\rm lim}$ contains $(k+2)$-many curves in $E(a,b) \backslash U_g^*L(r_0, s_0)$, denoted by $\{F_1, ..., F_{k+2}\}$. Then by Lemma 6.1 in \cite{hindzhang}, for each $i \in \{1, ..., k+2\}$, we have ${\rm ind}(F_i) = 1$. Moreover, since we have only one positive end, the argument in the proof of ``only if'' part of Theorem 1.4 in \cite{hindzhang} implies that all but one of the $F_i$ are planes and the remaining curve is a cylinder with positive end on $\gamma_b$. Without loss of generality, assume $F_1$ is the cylinder. Moreover, suppose the negative end of $F_i$ has type $(-k_i, -l_i)$. Since the negative ends bound a cycle in $T^* L(r_0, s_0)$ we see
\begin{equation} \label{top-zero}
\sum_{i=1}^{k+2} k_i = \sum_{i=1}^{k+2} l_i = 0.
\end{equation}

So far, we have implicitly assumed our $J_0$ is generic in order to guarantee curves of nonnegative index. However we would like to work with a $J_0$ such that the $z_1$ and $z_2$ axes are complex (and so finite energy planes $P_1$ and $P_2$ asymptotic to $\gamma_a$ and $\gamma_b$ respectively). As these axes are disjoint from $L(r_0, s_0)$, it is easy to check that they are not covered by any of our limiting curves, and so we still have the necessary regularity.

As the axes are complex, positivity of intersection implies that for $i \ge 2$ the $F_i$ are asymptotic to a Reeb orbits representing a class with $k_i \ge 0$ and $l_i \ge 0$. As the curves have index $1$, the index formula implies that $k_i + l_i =-1$, so the only possibility is that $(k_i, l_i) = (-1,0)$ or $(k_i, l_i) = (0,-1)$. Further, by positivity of intersection, as our limiting building has intersection number $+1$ with $P_1$, at most one $F_i$ is asymptotic to an orbit in the class $(0,-1)$.

Suppose first that exactly one $F_i$ for $i \ge 2$ is asymptotic to an orbit of type $(0,-1)$. Then $F_1$ is asymptotic to an orbit of type $(k,1)$ as required.

Alternatively, all $F_i$ for $i \ge 2$ are asymptotic to orbits of type $(-1,0)$ and $F_1$ is asymptotic to an orbit of type $(k+1,0)$. In this case the limiting building has intersection number $k+1$ with $P_2$. However we can compute Siefring's generalized intersection number (see (4-3) in \cite{siefring}) to be $$P_2 \ast P_2 = k.$$ As our limiting holomorphic building has a single unmatched positive end on $\gamma_b$ and (if we compactify with a disk inside $E(\ep, \ep S)$) is homotopic relative to its compactified positive boundary to $P_2$, Theorem 2.2 from \cite{siefring} implies that $k$ is an upper bound for the intersections between the curves in our limiting building and $P_2$. This is a contradiction if the building contains $k+1$ planes asymptotic to $(-1,0)$. \end{proof}

\begin{remark} \label{rmk-lem-1} For an alternative proof of Lemma \ref{lem-1}, at least for a carefully chosen $J_0$, we observe it is in fact possible to write down such a holomorphic cylinder directly. To do this we fix a circle $\Gamma = \{\theta_1 - k \theta_2 =0 \}$ in the torus $\T^2$, and identify the fibers of the moment map $\mu$ with the same $\T^2$, suitably collapsed over the axes. In the moment image $\mu (E(a,b))$ let $\sigma$ be a curve which coincides with the line through $(r_0, s_0)$ of slope $\frac{1}{k}$ at one end, and the vertical line through $(0,b)$ at the other. Then $\sigma \times \Gamma$ is a cylinder in $E(a,b)$, which is symplectic provided $\sigma$ never has slope $-k$. We can find such paths exactly because $b > ka$. Moreover our cylinder coincides with the trivial cylinder over the Reeb orbits at either end, so we can find an almost complex structure $J_0$ making the cylinder holomorphic.
\end{remark}

Given an isotopy of Lagrangian submanifolds $\{L_t\}_{t \in [0,1]}$, corresponding compatible almost complex structures $J_t$ on the complement of $L_t$, and a sequence of $J_{t_n}$-holomorphic curves $\{C_n\}_{n \in \N}$ with (negative) ends on the Lagrangian submanifolds $\{L_{t_n}\}_{n \in \N}$, the standard SFT-compactness implies that if $t_n \to t_*$ and the $C_n$ have bounded area, for example if they appear in the same moduli space, then the 
$C_n$ converge to a $J_{t_*}$-holomorphic building $C_{\rm lim}$. To be precise, the standard SFT-compactness applies to a sequence of ${J_n}$-holomorphic curves $C_n$ with a fixed end, however, Fukaya's trick (see subsection 2.1 in \cite{STV18}) transfers the moving boundary conditions on $L_{t_n}$ to a sequence of almost complex structures on a fixed cobordism. Hence, we still obtain a compactness result due to \cite{BEHWZ03}. A careful study of $C_{\rm lim}$ plays an important role in the proof of (I) in Theorem \ref{thm-path-lift-ball} and Theorem \ref{thm-path-lift-ellipsoid}. In particular, the following technical lemma, Lemma \ref{homology_class}, is useful to us. 

Let $C$ denote a curve with ${\rm ind}(C) = e$, one positive end on $\gamma_b$, and $e$-many negative ends on a Lagrangian torus. For instance, the cylinder provided by Lemma \ref{lem-1} satisfies this condition, since by (\ref{f-ind}) 
\begin{align*}
{\rm ind}(C) = (2k+3)  - (2k+2) = 1 = \# \,\mbox{negative ends of $C$}.
\end{align*}
Denoted by $\mathcal M_{C}(t)$ the moduli space corresponding to this curve $C$ with moving boundary conditions on $L_t$ for $t \in [0,1]$. Suppose $\mathcal M_C(t)$ is not compact, that is, there exist a degeneration at $t_* \in [0,1]$ with a limit curve $C_{\rm lim} \notin \mathcal M_C(t_*)$. By Lemma 3.7 in \cite{HO19}, there exists a curve in $E(a,b) \backslash U_g^*L_{t_*}$, denoted by $C_0$, with ${\rm ind}(C_0) \geq \# \,\mbox{negative ends of $C_0$}$. Also, we denote by $C_1, ..., C_k$ the components (as sub-buildings consisting of curves in $E(a,b) \backslash U_g^*L_{t_*}$ and in the symplectization $T^*L_{t_*}\backslash 0_{L_{t_*}}$) of the complement of $C_0$ in $C_{\rm lim}$ such that each $C_i$ matches with $C_0$ at only one negative end of $C_0$. See Figure \ref{figure_c_limit} for an example of $C_{\rm lim}$. Moreover, assume each $C_i$ for $i \in \{1, ..., k\}$ admits $e_i$-many negative ends, while the cardinality of the unmatched ends of $C_0$ is denoted by $e_0$ (hence, the total number of the negative ends of $C_0$ is $\sum_{i=0}^k e_i$).  
\begin{figure}[h]
  \centering
      \includegraphics[scale=0.75]{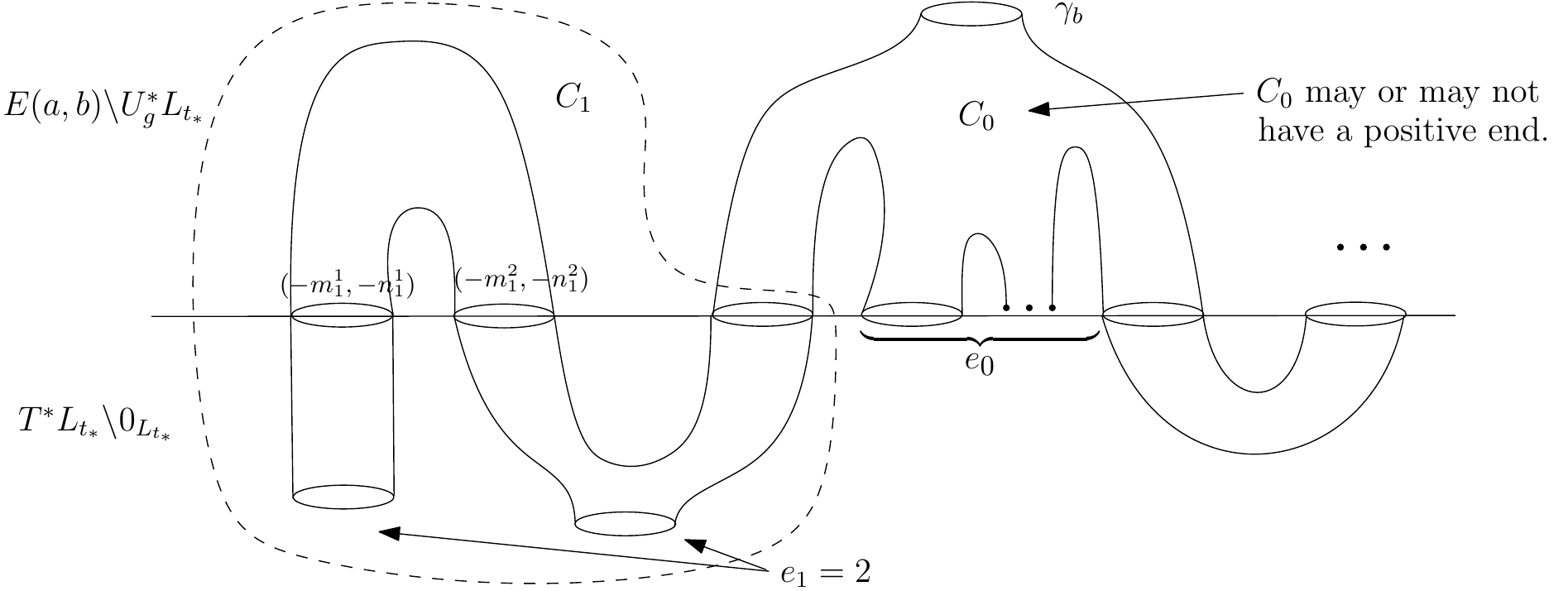}
  \caption{A limit curve in $\mathcal M_C(t_*)$.} \label{figure_c_limit}
\end{figure}
Finally, as shown in Figure \ref{figure_c_limit}, denote by the negative ends of the curves in $E(a,b) \backslash U_g^*L_{t_*}$ from the component $C_i$ by $\{(-m_i^{j}, -n_i^{j})\}_{j = 1, ..., l_i}$. Then we have the following result. 

\begin{lemma} \label{homology_class} Suppose $C_0$ contains the positive end $\gamma_b$. Then, with $(-m_i^j, -n_i^j)$ defined as above, we have $\sum_{i=1}^k \sum_{j=1}^{l_i} (m_i^j + n_i^j) \leq 0$.  \end{lemma}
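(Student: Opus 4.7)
The plan is to balance three linear relations coming from SFT: one from the Fredholm index of the original curve $C$ itself, one from the index lower bound of Lemma 3.7 in \cite{HO19} applied to the distinguished component $C_0$, and one from homology conservation at the symplectization level inside each sub-building $C_i$. Subtracting the second relation from the total-boundary form of the first, rewritten via the third, will give $\sum_{i,j}(m_i^j+n_i^j)\le 0$.

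First, I would apply the index formula \eqref{f-ind} to $C$. With $s_+=1$ (the positive end on $\gamma_b$), $s_-=e$ negative ends with subscripts $(m_p,n_p)$, and ${\rm ind}(C)=e$, the formula pins down a universal value
\[
\sum_{p=1}^e(m_p+n_p)=\Sigma
\]
depending only on $k$ and the Robin--Salamon contribution of $\gamma_b$ (and consistent with the single-negative-end example $\gamma_{(k,1)}$ of Lemma \ref{lem-1}, which has index $1$). Since SFT-compactness preserves the total projection class of the free negative asymptotes, $\Sigma$ equals the sum of the subscripts over all boundary orbits of $C_{\rm lim}$. Running the same index formula for $C_0$, which has positive end $\gamma_b$ and $e_0+k$ negative ends (of which the $k$ matched ones are $\sigma_i$ with subscripts $(A_i,B_i)$), and substituting ${\rm ind}(C_0)\ge e_0+k$ from \cite[Lemma 3.7]{HO19}, rearranges to
\[
\sum_q(a_0^q+b_0^q)+\sum_{i=1}^k(A_i+B_i)\;\ge\;\Sigma.\qquad(\star)
\]

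Second, I would exploit homology conservation in the symplectization level. Each curve in $T^*L_{t_*}\setminus 0_{L_{t_*}}$ is a $2$-chain in $S^*_gL_{t_*}\times\mathbb R$, so the total projection class of its positive asymptotes equals that of its negative ones in $H_1(T^2)$. Summing this identity over all symplectization curves of a fixed $C_i$ and cancelling the internal symplectization-to-symplectization matchings leaves, on one side, $\sigma_i$ together with the $l_i$ cobordism-level negative ends of $C_i$, and, on the other, the boundary orbits of $C_i$. Projecting onto the $(1,1)$ direction this reads
\[
\sum_{\beta\in\mathrm{Bdry}(C_i)}(m^\beta+n^\beta)=(A_i+B_i)+\sum_{j=1}^{l_i}(m_i^j+n_i^j).
\]
Summing over $i$, inserting into the total-boundary equality $\sum_q(a_0^q+b_0^q)+\sum_i\sum_\beta(m^\beta+n^\beta)=\Sigma$, and subtracting $(\star)$ yields the claim $\sum_{i,j}(m_i^j+n_i^j)\le 0$ directly.

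The main obstacle is justifying that no $C_i$ with $i\ge 1$ carries a free positive end on $\partial E(a,b)$, so that the only matchings its cobordism-level curves see are through the symplectization interface accounted for above. This is precisely where the hypothesis $\gamma_b\in C_0$ enters: since $\gamma_b$ is the only positive asymptote of $C$, once $C_0$ absorbs it, any extra free positive asymptote on $\partial E(a,b)$ in some $C_i$ would have to be $\gamma_a$ or a higher $\partial E$ orbit, and this is forbidden by positivity of intersection with the complex axes $P_1,P_2$ exactly as in the proof of Lemma \ref{lem-1}. With that exclusion in place the three linear relations close up and the inequality follows.
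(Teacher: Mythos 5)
Your proof is correct, but it reaches the inequality by a different mechanism than the paper's. The paper never applies the index formula to $C$ itself; instead it uses index additivity of the building, $\sum_{i=0}^k{\rm ind}(C_i)=e+k$, together with ${\rm ind}(C_0)\ge e_0+k$ to obtain $\sum_{i\ge1}({\rm ind}(C_i)-e_i)\le0$, and then converts each deficit ${\rm ind}(C_i)-e_i$ into $2\sum_{j,l}(m_i^{j,l}+n_i^{j,l})$ by a curve-by-curve computation inside $C_i$ (index formulas for the symplectization curves and the cobordism curves, balanced against an Euler-characteristic count of the matching tree). You instead apply the index formula only to $C$ and to $C_0$ --- exploiting that the unknown Conley--Zehnder contribution of $\gamma_b$ cancels between the two --- and replace the interior bookkeeping by homology conservation: once for the whole building (the total boundary class in $H_1(\T^2)$ is preserved in the SFT limit, harmless under the Fukaya trick since the identifying diffeomorphisms act trivially on $H_1$) and once inside each $C_i$ (positive minus negative asymptotics of a symplectization sub-building vanish in $H_1(\T^2)$). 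This is a genuinely different, and arguably cleaner, organization: it avoids the tree argument entirely, and it makes transparent that the only analytic input is ${\rm ind}(C_0)\ge e_0+k$, the rest being topology. The two routes are of course cousins, since the index of a curve with ends on a Lagrangian torus is an affine function of its boundary homology class, so index additivity and homology additivity carry the same information. One small caveat: your justification that no $C_i$ with $i\ge1$ carries an end on $\partial E(a,b)$ is not the right argument --- positivity of intersection with the axes constrains the classes of the negative ends on the torus, not positive ends on $\partial E(a,b)$. The correct reason is that the building, being a limit of curves with a single positive end, has exactly one free positive asymptote, namely $\gamma_b$; since every curve in a symplectization level must carry at least one positive end, an end of some $C_i$ on $\partial E(a,b)$ would force a second free positive asymptote. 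The paper asserts the same fact in one line, so this does not affect the validity of either argument.
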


\begin{proof} First, by the index matching formula (\ref{building-ind}), we have 
\begin{equation} \label{matching-index}
e = {\rm ind}(C_{\rm lim}) = \left(\sum_{i=0}^k {\rm ind}(C_i) \right) - k, \,\,\,\,\mbox{which is} \,\,\,\, \sum_{i=0}^k {\rm ind}(C_i) = e+k. 
\end{equation}
Since $e = \#\,\mbox{negative ends of $C_{\rm lim}$} = e_0 + \cdots + e_k$, by regrouping these terms, (\ref{matching-index}) is equivalent to the relation $\left({\rm ind}(C_0) - (e_0+k)\right) + \sum_{i=1}^k ({\rm ind}(C_i) - e_i) =0.$ Moreover, since $e_0 +k$ equals the $\# \, \mbox{negative ends of $C_0$}$, by assumption, ${\rm ind}(C_0) \geq e_0+k$, so 
\begin{equation} \label{ind-c_0}
\sum_{i=1}^k ({\rm ind}(C_i) - e_i) \leq 0. 
\end{equation}
Second, let us focus on a component $C_i$ for $i \in \{1, ..., k\}$. Since $C_0$ already occupies the only positive end $\gamma_b$, each such $C_i$ only has ends on $L_{t_*}$. By a further decomposition, suppose $C_i$ consists of $Q_i$-many curves in the symplectization $T^*L_{t_*} \backslash 0_{L_{t_*}}$, denoted by $\{u_i^j\}_{j=1, ..., Q_i}$. Each $u_i^j$ has $e_i^j$-many negative ends and $s_i^j$-many positive ends. In particular, $\sum_{j=1}^{Q_i} e_i^j = e_i$. Similarly, $C_i$ consists of $R_i$-many curves in $E(a,b) \backslash U_g^*L_{t_*}$, denoted by $\{v_i^j\}_{j=1, ..., R_i}$. Each $v_i^j$ has $r^j_i$-many negative ends on $L_{t_*}$, denoted by $\{(-m_i^{j,l}, -n_i^{j, l})\}_{l = 1, ..., r_i^j}$. See Figure \ref{figure_c_i} for an example of this decomposition of $C_i$. Note that $\sum_{j=1}^{Q_i} s_i^j = (\sum_{j=1}^{R_i} r_i^j) + 1$ since there is one extra end matching with $C_0$. 
\begin{figure}[h]
  \centering
      \includegraphics[scale=0.7]{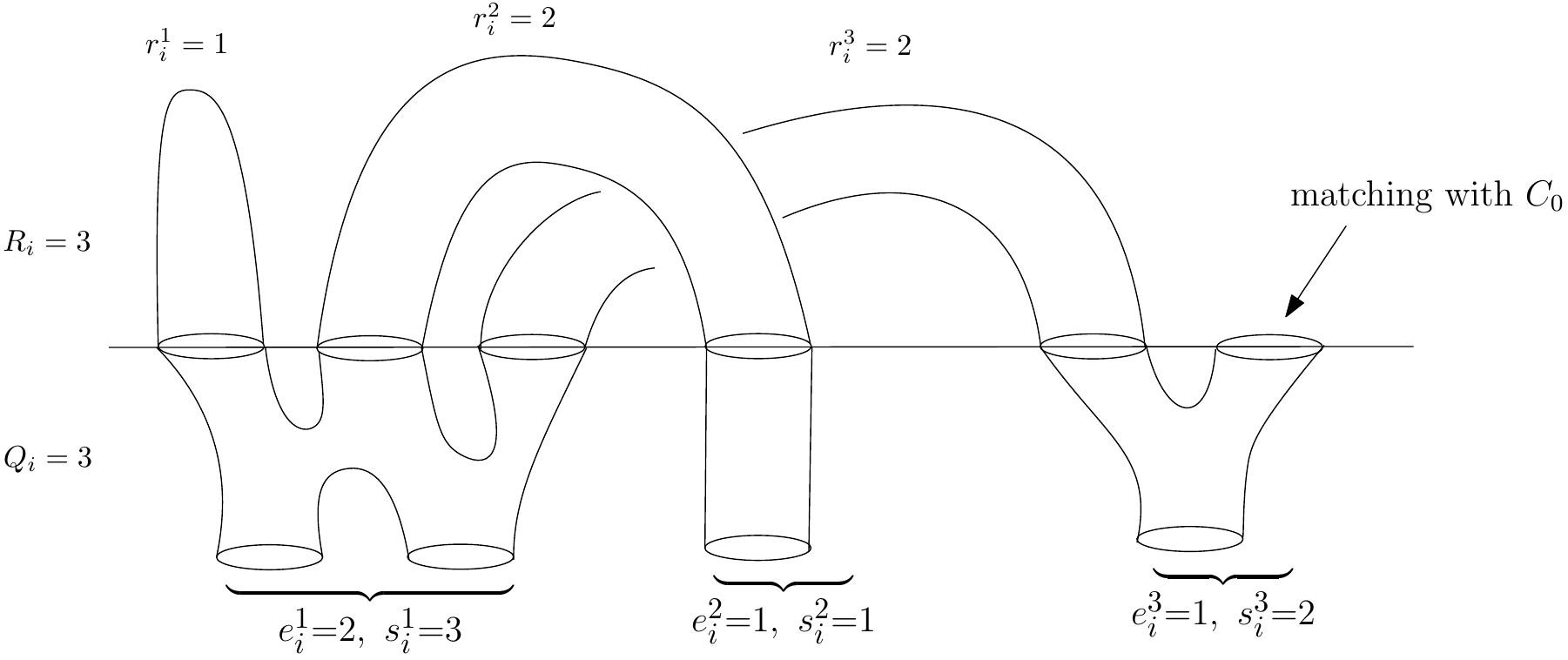}
  \caption{A decomposition of $C_i$.} \label{figure_c_i}
\end{figure}
By b) in Proposition 3.4 in \cite{HO19}, any $u_i^j$ as a curve in the symplectization has its index ${\rm ind}(u_i^j) = 2s_i^j + e_i^j -2$. Also, any $v_i^j$ as a curve in $E(a,b) \backslash U_g^*L_{t_*}$ has its index ${\rm ind}(v_i^j) = r_i^j -2 + 2 \sum_{l=1}^{r_i^j} (m_i^{j,l} + n_i^{j, l})$. Then by the index matching formula (\ref{building-ind}) again,  
\begin{align*}
{\rm ind}(C_i) - e_i & = \left(\sum_{j=1}^{R_i} {\rm ind}(v_i^j) + \sum_{j=1}^{Q_i} {\rm ind}(u_i^j) - \sum_{j=1}^{R_i} r_i^j \right) - e_i\\
& = \sum_{j=1}^{R_i} \left(r_i^j -2 + 2 \sum_{l=1}^{r_i^j} (m_i^{j,l} + n_i^{j, l})\right) \\
&\,\,\,\,\,\,\, + \sum_{j=1}^{Q_i} \left(2s_i^j + e_i^j -2 \right) - \sum_{j=1}^{R_i} r_i^j  - \sum_{j=1}^{Q_i} e_i^j\\
& = 2 \left(- R_i + \sum_{j=1}^{R_i} \sum_{l=1}^{r_i^j}(m_i^{j,l} + n_i^{j, l}) - Q_i + \sum_{j=1}^{Q_i} s_i^j \right).
\end{align*}
On the other hand, for such a decomposition of $C_i$, we can associate a tree to it by adding a vertex for each curve and asymptotic end, and an edge between the vertex for each curve and its asymptotic end. Here, we require that two vertices coincide if the corresponding asymptotic ends are matched. Since the Euler characteristic of a tree is always $1$, we have 
\[ R_i + Q_i - \left(\sum_{j=1}^{Q_i} s_i^j -1\right) = 1, \,\,\,\,\mbox{which implies} \,\,\,\, Q_i + R_i - \sum_{j=1}^{Q_i} s_i^j  = 0. \]
Therefore, we obtain the following relation, 
\[ {\rm ind}(C_i) - e_i = 2 \left(\sum_{j=1}^{R_i} \sum_{l=1}^{r_i^j}(m_i^{j,l} + n_i^{j, l}) \right).\]
Finally, sum over all $i = \{1, ..., k\}$, we have 
\begin{align*}
2 \sum_{i=1}^k \sum_{j=1}^{l_i} (m_i^j + n_i^j) & = 2 \sum_{i=1}^k \sum_{j=1}^{R_i} \sum_{l=1}^{r_i^j}(m_i^{j,l} + n_i^{j, l}) \\
& = 2 \sum_{i=1}^k ({\rm ind}(C_i) - e_i) \leq 0, 
\end{align*}
where $l_i = \sum_{j=1}^{R_i} r_i^j$ and the last step comes from (\ref{ind-c_0}). Thus we complete the proof.\end{proof}

An immediate corollary of Lemma \ref{homology_class} is the following useful result. Denote by $\mathcal M_{C_0}(t)$ the moduli space of the curve $C_0$ in Lemma \ref{homology_class} (in particular, it admits the positive end $\gamma_b$) with moving boundary conditions on $L_t$. Recall that $L_t$ is a Lagrangian isotopy covering a path $\gamma(t)$ in the shape of $X$. Denote by ${\rm area}_{C_0}(t)$ the time-dependent area of curve $C_0$ for $t \in [0,1]$, similarly to ${\rm area}_C(t)$ where $C$ is the initial curve we start from. We note that these area formulas only depend on the moduli space of $C$ or $C_0$, actually only on the homology class of $C_0$ and $C$. Thus they are well defined even if the corresponding moduli space is empty, and in particular ${\rm area}_C(t)$ is defined even for $t > t_*$. Recall that $t_* \in [0,1]$ is the moment where $C$ degenerates. Moreover, since $\gamma|_{t=1} \notin \mu(E(a,b))$ but $\gamma|_{t=1} \in {\rm Sh}_H^+(E(a,b))$, due to the picture of ${\rm Sh}_H^+(E(a,b))$, the path $\gamma$ has to pass through a point (possibly, several points) on the line segment 
\begin{equation}\label{L} 
{\rm L} : = \partial \mu(E(a,b)) \cap \left\{(r,s) \in \R_{> 0}^2 \,\big|\, 0< r < \frac{a}{2} \right\}. 
\end{equation}

\begin{prop} \label{prop-area} Suppose, at the degeneration moment $t_*$, the area class $(r_{t_*}, s_{t_*}) \in {\rm int}(\mu(E(a,b))^{+})$ and $r(t) / s(t)$ is non-increasing with respect the parameter $t \in [0,1]$. Then ${\rm area}_{C_0}(t) \leq {\rm area}_{C}(t)$ for all $t \in [t_*,1]$. 
\end{prop}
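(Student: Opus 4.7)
The plan is to exhibit $\text{area}_C(t) - \text{area}_{C_0}(t)$ as an explicit linear function of $(r_t, s_t)$, bracket its coefficients using Lemma \ref{homology_class} together with area preservation at $t_*$, and conclude via the monotonicity hypothesis on $r_t/s_t$. First I would write both areas in the form $b - r_t M - s_t N$, where $(M, N)$ denotes the componentwise sum of the asymptotic subscripts of the relevant curve's negative ends. For the starting cylinder from Lemma \ref{lem-1} one has $(M_C, N_C) = (k, 1)$, so setting $\Delta M := M_{C_0} - M_C$ and $\Delta N := N_{C_0} - N_C$ gives
\[
\text{area}_C(t) - \text{area}_{C_0}(t) = r_t \Delta M + s_t \Delta N,
\]
a linear function of $(r_t, s_t)$ determined entirely by the topology of the degeneration.

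Next I would pin down $(\Delta M, \Delta N)$ with two inequalities. The area-preservation formula (\ref{building-area}) at $t = t_*$ gives $\text{area}(C_{\text{lim}}) = \text{area}_C(t_*)$, and since $C_0$ is just one component of $C_{\text{lim}}$ (the other sub-buildings contributing non-negative areas), we get $r_{t_*}\Delta M + s_{t_*}\Delta N \geq 0$. For the matching upper bound on $\Delta M + \Delta N$, I would perform the homology bookkeeping on each sub-building $C_i$: summing ``sum of positive-end subscripts minus sum of negative-end subscripts'' over all curves of $C_i$, the symplectization curves contribute zero, while the cobordism planes contribute exactly $(\sum_j m_i^j, \sum_j n_i^j)$ in the notation of Lemma \ref{homology_class}. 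After cancelling the internal matchings and using that the total subscript of the external negative ends of $C_{\text{lim}}$ coincides with $(k, 1)$, the identity telescopes to $\Delta M = \sum_{i,j} m_i^j$ and $\Delta N = \sum_{i,j} n_i^j$, so Lemma \ref{homology_class} yields $\Delta M + \Delta N \leq 0$.

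The final step combines these constraints with the geometric hypotheses. Because $(r_{t_*},s_{t_*}) \in {\rm int}(\mu(E(a,b))^+)$ one has $x_{t_*} := r_{t_*}/s_{t_*} < 1$. Writing $r_t \Delta M + s_t \Delta N = s_t \, g(r_t/s_t)$ with $g(x) := x\Delta M + \Delta N$, the two inequalities $g(x_{t_*}) \geq 0$ and $g(1) = \Delta M + \Delta N \leq 0$, combined with $x_{t_*} < 1$, force the slope $\Delta M$ of $g$ to be non-positive by linearity. The monotonicity hypothesis then says $r_t/s_t \leq x_{t_*}$ for $t \in [t_*, 1]$, and since $\Delta M \leq 0$ makes $g$ non-increasing, $g(r_t/s_t) \geq g(x_{t_*}) \geq 0$; multiplying by $s_t > 0$ gives $\text{area}_{C_0}(t) \leq \text{area}_C(t)$. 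The main obstacle is the homology bookkeeping in the second paragraph, since each sub-building $C_i$ is multi-level, and its internal matchings at intermediate Reeb orbits must be carefully separated from its single external matching with $C_0$ and from its external negative ends on $L_{t_*}$ before the cobordism-plane quantities $(m_i^j, n_i^j)$ appearing in Lemma \ref{homology_class} can be isolated.
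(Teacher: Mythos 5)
Your proposal is correct and follows the paper's argument in all essentials: the area difference is the linear form $\Delta M\, r_t + \Delta N\, s_t$ (equal to the paper's $Mr_t+Ns_t$), with $\Delta M+\Delta N\le 0$ supplied by Lemma \ref{homology_class} and $\Delta M\, r_{t_*}+\Delta N\, s_{t_*}\ge 0$ by positivity of the sub-buildings' areas at the degeneration, and the conclusion then follows from the monotonicity of $r_t/s_t$. The only difference is cosmetic: your endgame is a direct monotonicity argument for the affine function $g(x)=\Delta M\,x+\Delta N$ on $[r_t/s_t,\,1]$, whereas the paper argues by contradiction via the intermediate value theorem and a two-case analysis of the position of the line $Mr+Ns=0$; your version is, if anything, slightly cleaner.
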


\begin{proof} We know that ${\rm area}_{C_0}(t_*) \leq {\rm area}_{C_{\rm lim}}(t_*) = {\rm area}_{C}(t_*)$ since $C_0$ is a component of $C_{\rm lim}$, it suffices to prove the conclusion for $t \in (t_*,1]$. Suppose not, that is, there exists some $t' \in (t_*, 1]$ 
such that ${\rm area}_{C_0}(t') > {\rm area}_{C}(t')$. Note that for any $t \in [t_*,1]$, we have 
\begin{align*}
{\rm area}_C(t) & = \sum_{i=1}^k {\rm area}_{C_i}(t) \\
& = {\rm area}_{C_0}(t) + \left(\sum_{i=1}^k\sum_{j=1}^{l_i} m_i^j\right) r_t + \left(\sum_{i=1}^k\sum_{j=1}^{l_i} n_i^j\right) s_t,
\end{align*}
where, by the homological reason, the curves in the symplectization contribute zero to the area. For brevity, let $M := \sum_{i=1}^k\sum_{j=1}^{l_i} m_i^j$ and $N: = \sum_{i=1}^k\sum_{j=1}^{l_i} n_i^j$.

Arguing by contradiction, the relation ${\rm area}_{C_0}(t') > {\rm area}_{C}(t')$ above implies that $Mr_{t'} + Ns_{t'} <0$. Meanwhile, since at $t= t_*$ each $C_i$ for $i \in \{1, ..., k\}$ is a holomorphic curve, we have $\sum_{i=1}^k {\rm area}_{C_i}(t_*) >0$, which is equivalent to $Mr_{t_*} + Ns_{t_*} >0$. 
Hence, since the function $Mr_t + Ns_t$ is continuous with respect to time $t$, the mean value theorem implies that there exists $t_{\dagger} \in (t_*, t')$ such that 
\begin{equation} \label{area-contra}
Mr_{t_{\dagger}} + N s_{t_{\dagger}} = 0.
\end{equation}
There are two cases as follows, see Figure \ref{figure_rel_pos}, depending on the relative position between $(r_{t_*}, s_{t_*})$ and the line $Mr + Ns  =0$. 
\begin{figure}[h]
  \centering
      \includegraphics[scale=0.8]{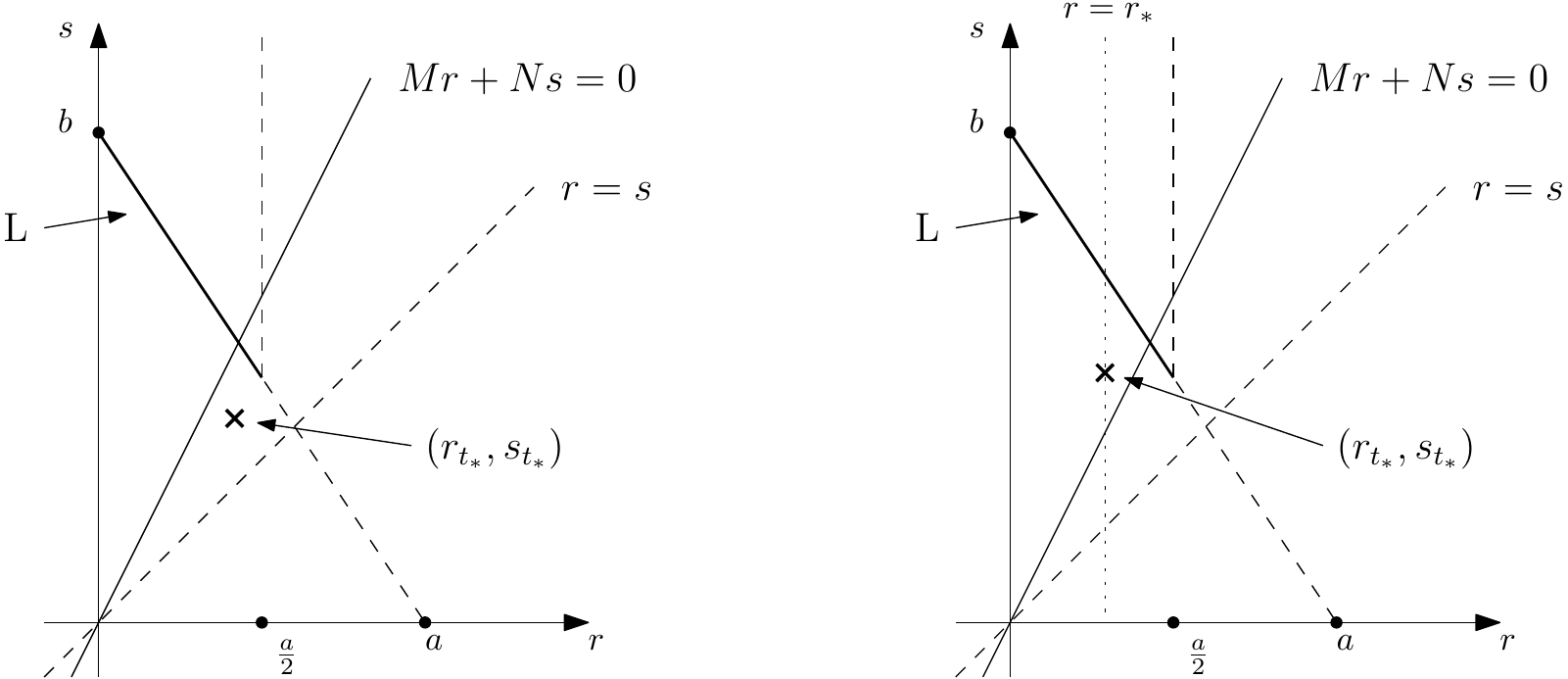}
  \caption{Relative position between $(r_{t_*}, s_{t_*})$ and $Mr + Ns = 0$.} \label{figure_rel_pos}
\end{figure}
In the left picture, the condition $Mr_{t_*} + N s_{t_*} >0$ implies that the {\it lower} half plane given by the division of the line $Mr + Ns = 0$ is the positive region. In particular, for any $r = s(>0)$, we have 
\[ Mr + Nr >0 \,\,\,\,\,\mbox{which implies that $M + N >0$}. \]
This contradicts Lemma \ref{homology_class}. In the right picture, the condition $Mr_{t_*} + N s_{t_*} >0$ implies that the {\it upper} half plane given by the division of the line $Mr + Ns = 0$ is the positive region. By our hypothesis that 
$\frac{r(t)}{s(t)}$ is non-increasing, $\gamma|_{t >t_*}$ remains above the line $Mr + Ns = 0$, contradicting the existence of a $t_{\dagger}$. 
\end{proof}


\subsection{Proof of (I) in Theorem \ref{thm-path-lift-ball} and \ref{thm-path-lift-ellipsoid}} \label{ssec-path-lift-ball-ellipsoid} Suppose by contradiction that path $\gamma = \{(r_t, s_t)\}_{t \in [0,1]}$ admits a lift $\{L_t\}_{t \in [0,1]}$. 


Denote by ${\mathcal M}_1(t)$ the moduli space consisting of cylinders with positive end $\gamma_b$ and negative end $\gamma_{k,1}$ on the moving boundary $L_t$. By Lemma \ref{lem-1}, we know that ${\mathcal M}_1(0) \neq \emptyset$. Meanwhile, by (\ref{f-ind}) each $C^{(1)}_0 \in \mathcal M_1(0)$ has its index equal to 
\begin{align*}
{\rm ind}(C^{(1)}_0) & = (1+1-2) + 0 + (2 + 2k + 1) - \left(2(k+1) + \frac{1}{2} - \frac{1}{2}\right) \\
& = (2k+3)  - (2k+2) = 1.
\end{align*}
Meanwhile, since ${\rm ind}(C^{(1)}_0) = 1> -1 = 2 \cdot {\rm genus} (C^{(1)}_0) - 2 + \# \Gamma_{\rm even}$, where $\# \Gamma_{\rm even}$ is the number of asymptotic ends with even CZ-indices, it verifies the automatic regularity of any $C^{(1)}_0 \in \mathcal M_1(0)$ (see Theorem 1 in \cite{Wen10}). Then ${\mathcal M}_1(t)$ is nonempty for an open subset of $t \in [0,1]$. 

Now, suppose that the moduli space is compact. Then $\mathcal M_1(0) \neq \emptyset$ implies that ${\mathcal M}_1(1) \neq \emptyset$. However, any curve $C^{(1)}_1 \in \mathcal M_1(1)$ has
\begin{equation} \label{area-contr}
{\rm area}(C^{(1)}_1) = b - kr_1 - s_1 >0, \,\,\,\mbox{which means} \,\,\,\frac{r_1}{a} + \frac{s_1}{b} < 1.
\end{equation}
This is in contradiction to our assumption that $(r_1, s_1) \notin \mu(E(a,b))^+$. Hence, the moduli space is not compact and we have a degeneration at some time $t_* \in [0,1]$, that is, we have a holomorphic building $C^{(1)}_{\rm lim}$ which is a limit of curves $C_n \in \mathcal M_1(t_n)$ with $t_n \to t_*$. 
The argument (\ref{area-contr}) implies that the degeneration point $(r_{t_*}, s_{t_*})$ in fact occurs inside ${\rm int}(\mu(E(a,b)^+)$. 

By Section 3.3 in \cite{HO19}, $C^{(1)}_{\rm lim}$ contains a top level curve, denoted by $C_0^{(2)}$, of index at least $e$ and with $e$ negative ends. We claim that $C_0^{(2)}$ must have a positive end; we can then check that for index reasons it must be asymptotic to $\gamma_b$. In fact, suppose not, then by Lemma 3.7 in \cite{HO19} we may assume the curve $C_0^{(2)}$ has area at least $r_{t_*}$. As any curve in ${\mathcal M}_1(t_*)$ has area $b - kr_{t_*} - s_{t_*}$, we must then have $r_{t_*} \leq {\rm area}_{C^{(2)}_0}(t_*) < b - kr_{t_*} - s_{t_*}$, which is equivalent to $(k+1)r_{t_*} + s_{t_*} < b$, contradicting our assumption. 

Denote by $\mathcal M_2(t)$ the moduli space corresponding to the curve $C^{(2)}_0$ above with the moving boundary condition on $L_t$ for $t \in [t_*,1]$. If needed, consider $\mathcal M_2(t)$ with extra marked points, so the resulting index is exactly equal to $e$ (instead of at least $e$). Then Proposition \ref{prop-area} implies that $C^{(2)}_0$ will not survive until $t =1$. In fact, it must degenerate before time $t_1$, where $\gamma|_{t=t_1}$ intersects ${\rm L}$ for the first time. Indeed, otherwise there will exists some curve denoted by $C^{(2)}_1 \in \mathcal M_2(t_1)$ that results in the following contradiction,
\[ {\rm area}_{C_1^{(2)}}(t_1)>0 \,\,\,\,\mbox{but} \,\,\,\, {\rm area}_{C_1^{(2)}}(t_1) \leq {\rm area}_{C^{(1)}_{\rm lim}}(t_1) = 0. \]
Therefore, the moduli space $\mathcal M_2(t)$ is not compact over $[t_*,1]$ and it will degenerate again at some $t_{**} \in (t_*, t_1)$. Repeat the argument above by considering the limit building $C^{(2)}_{\rm lim}$ from a degeneration of $C^{(2)}_0$ and pick the preferred top curve as above denoted by $C^{(3)}_0$, we will get a further degeneration, now within the time interval $(t_{**}, t_1)$, by Proposition \ref{prop-area}.

We note that this process must terminate, and hence the desired contradiction is given by an inductive argument. To see this, suppose we have a sequence of $C^{(k)}_0$ asymptotic to $L_k \to L_{\infty}$. Using Fukaya's trick again there exists a family of global diffeomorphisms mapping our $L_k$ to an $\tilde{L}_{\infty}$, where $\tilde{L}_{\infty}$ lies very close to $L_{\infty}$ but has rational area class, that is, $\tilde{L}_{\infty}$ is Hamiltonian isotopic to an $L(\delta m, \delta n)$ with $m,n \in \N$. We may assume that the almost complex structures on the complement of the $L_k$ push forward to tame almost complex structures on the complement of  $\tilde{L}_{\infty}$.
By construction, our $C^{(k+1)}_0$ have a positive end, and in a degeneration of $C^{(k)}_0$ the curve  $C^{(k+1)}_0$ is the only curve in the limiting building with an end on $\partial E(a,b)$. Therefore, 
the other top level curves in the limit, when pushed forward to the complement of $\tilde{L}_{\infty}$, have area at least $\delta$. Hence, in the complement of $\tilde{L}_{\infty}$, the areas of the $C^{(k)}_0$ decrease by at least $\delta$ at each step, and so indeed our recursion is finite. \qed

\subsection{Proof of (I) in Theorem \ref{thm-path-lift-polydisk}} \label{ssec-path-lift-ball-polydisk} The proof of the obstruction to the path lifting in the polydisks $P(c,d)$ is quite similar to the proof in balls and integral ellipsoids. The only variation is that we start from a curve that is different from the one produced by Lemma \ref{lem-1} or Remark \ref{rmk-lem-1}. Here we give the sketch of the proof. 

\medskip

Suppose by contradiction that path $\gamma = \{(r_t, s_t)\}_{t \in [0,1]}$ admits a lift $\{L_t\}_{t \in [0,1]}$. Compactify $P(c,d)$ to $S^2(c) \times S^2(d)$ with two factors having areas $c$ and $d$. Denote by $\mathcal M_1(t)$ the moduli space consisting of finite energy planes that intersects the $\{\infty\} \times S^2(d)$ only once (i.e., it is of bidegree $(0,1))$ and has its negative asymptotic end $\gamma_{(0,1)}$ on the moving boundary $L_t$ for $t \in [0,1]$. The moduli space $\mathcal M_1(0)$ is non-empty since we can simply write out (the image of) a curve $C$ explicitly as follows, 
\[ C^{(1)}_0 = \{(z,w) \in \C^2 \,| \, z = {\rm constant}, \,\, \pi|w|^2 >d\}, \]
and by (\ref{f-ind-2}) ${\rm ind}(C^{(1)}_0) = -1 + 2\cdot 2 +2\cdot (-1) = 1$, plus the automatic regularity can be verified. 

Now, suppose that the moduli space is compact, then $\mathcal M_1(1) \neq \emptyset$. Then any curve $C^{(1)}_1 \in \mathcal M_1(1)$ satisfies ${\rm area}(C_1^{(1)}) = d - s_1 >0$, which contradicts the our assumption that $(r_1, s_1) \notin \mu(P(c,d))^+$. Hence, we have a degeneration at some time $t_* \in [0,1]$ with a limit curve as a holomorphic building $C_{\rm lim}^{(1)}$. By Section 3.3 in \cite{HO19}, $C_{\rm lim}^{(1)}$ contains a top level curve  $C_0^{(2)}$ of index at least $e$ with $e$ negative ends. This curve $C_0^{(2)}$ intersects $\{\infty\} \times S^2(d)$ since otherwise 
\[ r_{t_*} \leq {\rm area}_{C_0^{(2)}}(t_*) < d - s_{t_*} \]
which contradicts our assumption. 

Denote by $\mathcal M_2(t)$ the moduli space corresponding to the curve $C_0^{(2)}$ above with moving boundary on $L_t$ for $t \in [t_*,1]$. It is readily checked that the same conclusion in Lemma \ref{homology_class} holds if we replace the assumption ``contains the positive end $\gamma_b$'' with ``intersects $\{\infty\} \times S^2(d)$''. Then Proposition \ref{prop-area}, whose argument only involves areas, implies that $\mathcal M_2(t)$ is not compact, so it will degenerate again at $t_{**} \in (t_*, t_1)$ where $t_1$ is the first time that $\gamma$ intersects the bar $\{(r, d) \in \R_{>0}^2 \,| \, 0<r< \frac{c}{2}\}$. The rest of the proof goes exactly the same as the one in subsection \ref{ssec-path-lift-ball-ellipsoid}, and thus we get the desired conclusion. \qed

\section{Construction of path lifting} 

\subsection{Path lifting criterion} \label{ssec-path-lifting-criterion} In this section we construct our path lifts. 
The main result is the following theorem giving a Hamiltonian isotopy with controlled support between an inclusion and a {\it rolled up} Lagrangian. The domains of interest are the $Q(a,b)$ defined here.

\begin{dfn} The domain $Q(a,b) \subset \C^2$ is defined by $Q(a,b) = \Omega_{q(a,b)} := \{(z,w) \in \C^2 \,|\, (\pi|z|^2, \pi|w|^2) \in q(a,b)\}$ where 
\begin{equation} \label{quadr}
q(a,b) = \left\{(x,y) \in \R^2 \, \bigg| \,  0 \leq x \leq 2a - \frac{ay}{a+b}, \,\, 0 \leq y \leq a+b \right\},
\end{equation}
a quadrilateral region in $\R^2$.
\end{dfn}

\begin{theorem} \label{construction} Let $0 < a <b$ and $\ep>0$. Then there exists a Hamiltonian isotopy of the Lagrangian tori
denoted by $\{L_t\}_{t \in [0,1]}$ with $L_0 = L(a,b)$ such that the following conclusions hold. 
\begin{itemize}
\item[(1)] For every $t \in [0,1]$, the Lagrangian $L_t \subset Q(a + \ep,b + \ep)$.
\item[(2)] $L_1 \subset Q(a,a)$. 
\end{itemize}
\end{theorem}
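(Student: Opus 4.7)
The plan is to construct the isotopy explicitly using a generating-function approach in action-angle coordinates, and then invoke Chaperon's theorem \cite{chaperon83} to promote it to an ambient Hamiltonian isotopy. In coordinates $(\mu_1, \theta_1, \mu_2, \theta_2)$ on $\C^2 \setminus \{z_1 z_2 = 0\}$ with $\mu_k = \pi|z_k|^2$ and $\theta_k = \arg(z_k)/(2\pi) \in \R/\Z$, one has $\omega = d\mu_1 \wedge d\theta_1 + d\mu_2 \wedge d\theta_2$ and $L(a,b) = \{\mu_1 = a,\ \mu_2 = b\}$. For any smooth $\psi : T^2 \to \R$, the submanifold
\[
L_\psi := \{\mu_1 = a + \partial_{\theta_1}\psi,\ \mu_2 = b + \partial_{\theta_2}\psi\},
\]
parametrized by $(\theta_1, \theta_2)$, is a Lagrangian torus (equality of mixed partials $\psi_{\theta_1 \theta_2} = \psi_{\theta_2 \theta_1}$ forces $\omega$ to vanish on tangent vectors), with area classes $(a,b)$ since each $\partial_{\theta_j}\psi$ averages to zero over $T^2$.

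I would then take $\psi_1(\theta_1, \theta_2) := \tfrac{2a-b}{2\pi}\sin(2\pi\theta_2)$, valid in the regime $b \le 2a$ that appears to be the effective range of the theorem in view of the shape-invariant inclusion $Q(a,a) \subset P(2a,2a)$ combined with Theorem \ref{shapecalc}(iii). With this choice, $L_{\psi_1}$ has $\mu_1 \equiv a$ and $\mu_2 = b + (2a-b)\cos(2\pi\theta_2) \in [2b - 2a,\, 2a]$, so its moment image is a vertical segment contained in $q(a,a)$ (since $\mu_1 = a \le 2a - \mu_2/2$ whenever $\mu_2 \le 2a$, and $2b-2a>0$). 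Define $L_t := L_{t\psi_1}$; then $L_0 = L(a,b)$, $L_1 \subset Q(a,a)$, and for each $t$ the moment image of $L_t$ is the segment $\{\mu_1 = a,\ \mu_2 \in [b - t(2a-b),\, b + t(2a-b)]\}$, which lies in $q(a+\epsilon, b+\epsilon)$ because along the vertical line $x = a$ this quadrilateral extends up to $y = (a + 2\epsilon)(a + b + 2\epsilon)/(a + \epsilon)$, exceeding $2a$ since $b > a$.

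Because each $L_t$ has area classes $(a,b)$, Chaperon's theorem produces an ambient Hamiltonian isotopy of $\C^2$ realizing the family $\{L_t\}$, and a cutoff inside a sufficiently small tubular neighborhood of $\bigcup_t L_t$ (which sits well inside $Q(a+\epsilon, b+\epsilon)$ by the previous paragraph) yields a Hamiltonian supported in $Q(a+\epsilon, b+\epsilon)$. The most delicate step is the choice of $\psi_1$: the zero-mean constraint on $\partial_{\theta_j}\psi$ in a single chart is precisely what pins the naive construction above to $b \le 2a$, and any genuinely larger range would require either iterating the construction along a sequence of intermediate targets $Q(a_i, b_i)$, or passing to multi-valued $\psi$ on a cover of $T^2$ so that the resulting Lagrangian winds non-trivially along the $\theta_2$ direction, effectively folding along the slanted edge of $q(a+\epsilon, b+\epsilon)$ whose slope $-(a+b+2\epsilon)/(a+\epsilon)$ is precisely adapted to this folding.
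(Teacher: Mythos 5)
Your construction has a genuine and fatal gap: it only works when $b \le 2a$, whereas the theorem is stated, and crucially used, for arbitrary $b>a$. The obstruction is intrinsic to your method. Any torus of the form $L_\psi=\{\mu_1=a+\partial_{\theta_1}\psi,\ \mu_2=b+\partial_{\theta_2}\psi\}$, i.e.\ a graph of an exact $1$-form over $L(a,b)$ in the toric action--angle chart, must contain points with $\mu_2\ge b$, since $\partial_{\theta_2}\psi$ integrates to zero over each $\theta_2$-circle and hence vanishes somewhere. As $q(a,a)$ has maximal height $2a$, no graphical torus (nor any iteration of graphical deformations over product tori, which all share this constraint) can land in $Q(a,a)$ once $b>2a$. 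Your justification for discarding that regime is also incorrect: by Theorem \ref{shapecalc}(iii) the reduced shape invariant of $P(2a+\ep,2a+\ep)$ contains the entire vertical strip $\{r<a+\ep/2\}$, so $(a,b)\in{\rm Sh}_H^+(P(2a+\ep,2a+\ep))$ for \emph{every} $b$, and there is no tension between $Q(a,a)\subset \overline{P(2a,2a)}$ and the conclusion. Indeed the whole point of Theorem \ref{construction} --- and of its application in Corollary \ref{thm-lift} to lifting paths into the vertical strips of the shape invariants --- is precisely the regime $b\gg a$. Your closing suggestions (iterating through intermediate targets, or multi-valued $\psi$ on a cover) are not arguments; the first fails for the reason above, and the second is exactly the hard content that needs to be constructed.

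The paper's proof supplies that missing content by a non-graphical ``rolling up'' construction: it replaces $D(b)$ by a tall rectangle $[0,1]\times[0,b]$ in the $w$-plane, uses a Hamiltonian $H(z,w)=\chi(y)G(z)$ whose $z$-component displaces $D(a)$ over the bottom of the rectangle, and then wraps the long tongue $[0,1]\times[a+\ep,b]$ repeatedly back into $[0,1]\times[0,a+\ep]$; the apparent self-intersections in the $w$-projection are avoided in $\C^2$ because the $z$-circles over the overlapping strips lie in $D(a)$ and in $\phi_G^1(D(a))$ respectively, which are disjoint. The resulting torus winds non-trivially and its moment image stays below height $2a+O(\ep)$ regardless of $b$, which is exactly what your graph construction cannot achieve. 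Within the range $b\le 2a$ your computation of the containments in $q(a,a)$ and $q(a+\ep,b+\ep)$ is fine, but that case is not the one the theorem is for.
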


\begin{proof} We use coordinates $(z,w)$ on $\C^2$. Denote by $D(2a+2\ep)$ the disk in $z$-plane centered at origin and with area equal to $2a + 2\ep$ where $0 < \ep < \frac{b-a}{4}$. 

Consider a time-independent Hamiltonian function on $z$-plane denoted by $G(z)$ with its support in $D(2a+\ep)$ such that $0 \leq G(z) \leq a+\frac{\ep}{2}$ and its Hamiltonian diffeomorphism $\phi_G^1$ displaces $D(a)$. Note that this $G(z)$ exists since the displacement energy of $D(a)$ is $a$ and such a displacement can be obtained inside $D(2a+2\ep)$. 

Next we define a product Lagrangian torus $\tilde{L}(a,b)$ in $\C^2$ by
\[ \tilde{L}(a,b) :=\partial D(a) \times \partial ([0,1] \times [0,b]),\]
where $[0,1] \times [0,b]$ is a rectangle in $w$-plane. Let $w = x + \sqrt{-1} y$.
Of course, $\tilde{L}(a,b)$ is symplectomorphic to our standard product by a symplectomorphism on $\C$ that interchanges the disk $D(b)$ with the rectangle $[0,1] \times [0,b]$ in the $w$-plane.

Consider a product Hamiltonian function 
\[ H(z,w) = \chi(y) \cdot G(z), \]
where $\chi: [0,a+\ep] \to [0,1]$ is a smooth function, extended by zero outside $[0, a+\ep]$, such that for a sufficiently small $\delta>0$ such that $8 \delta < \ep$, 
\begin{itemize}
\item[(i)] $\chi|_{[0,\delta]} = 1$, $\chi|_{[a+\ep-\delta, a+\ep]} = 0$, and $\chi|_{[2\delta, a+\ep-2\delta]}$ is linear.
\item[(ii)] $\chi|_{[\delta,2\delta]}$ is concavely decreasing and $\chi|_{[a+\ep-2\delta, a+\ep-\delta]}$ is convexly decreasing. 
\end{itemize}
See Figure \ref{figure_chi} for the pictures of $\chi(y)$ and $-\chi'(y)$. 
\begin{figure}[h]
  \centering
      \includegraphics[scale=0.85]{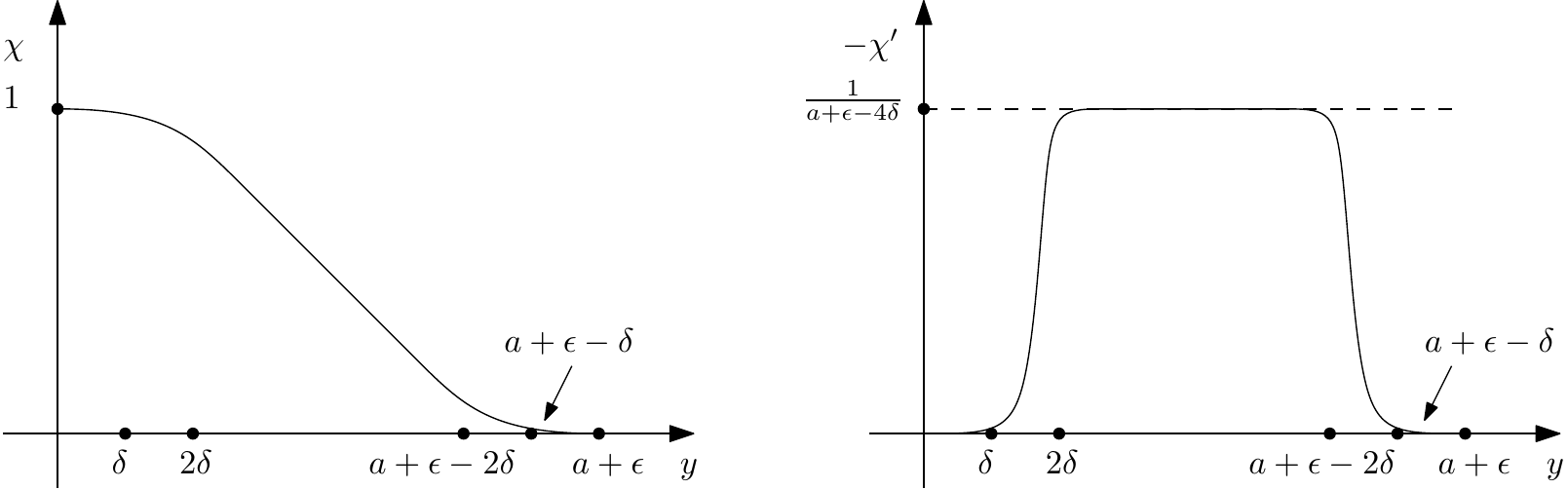}
  \caption{$\chi(y)$ and $-\chi'(y)$.} \label{figure_chi}
\end{figure}
In particular, we have the differential $dH(z,w) = \chi'(y) G(z) dy + \chi(y) dG(z)$. Under the standard symplectic structure on $\C^2$, that is, $\omega_{\rm std} = \omega_{\rm std, \C}+dx \wedge dy$, one can solve the Hamiltonian vector field $X_H$ (via the Hamiltonian equation $-dH = \iota_{X_H} \omega_{\rm std}$), which gives 
\[ X_H(z,w) = - \chi'(y) G(z) \frac{\partial}{\partial x} + \chi(y) X_{G(z)}(z), \]
where $X_{G(z)}$ is the Hamiltonian vector field generated by the function $G(z)$ above with respect to $\omega_{\rm std, \C}$. Therefore, the resulting Hamiltonian diffeomorphism of $H$ is 
\begin{equation} \label{ham-flow-H}
\phi^t_H(z,w) = (\phi_{\chi \cdot G}^t(z), (x- t\chi'(y)G(z), y)),
\end{equation}
where recall that $\phi_G^1$ is the Hamiltonian diffeomorphism of $G$ on the $z$-plane. 

Now, apply the Hamiltonian diffeomorphism $\phi^1_H$ to $\tilde{L}(a,b)$, and Figure \ref{figure_proj_z} shows a schematic picture of how the projection to the  
$w$-plane changes (indicated by the shaded part). For later use, we label the rectangle $S: = [0,1] \times [a+\ep, b]$. By our choice of $\ep$ and $\delta$ as above,
\[ \max_{L(a,b)} \{-\chi'(y) G(z)\} \leq \frac{a + \frac{\ep}{2}}{a+\ep - 4\delta} <1.\]
Therefore the bottom part of the rectangle which is moved by $\phi_H^1$ is contained in the rectangle  $[0,2] \times [0, a+\ep]$ and touches neither the line $x = 1$ nor $x=2$. 
\begin{figure}[h]
  \centering
      \includegraphics[scale=0.8]{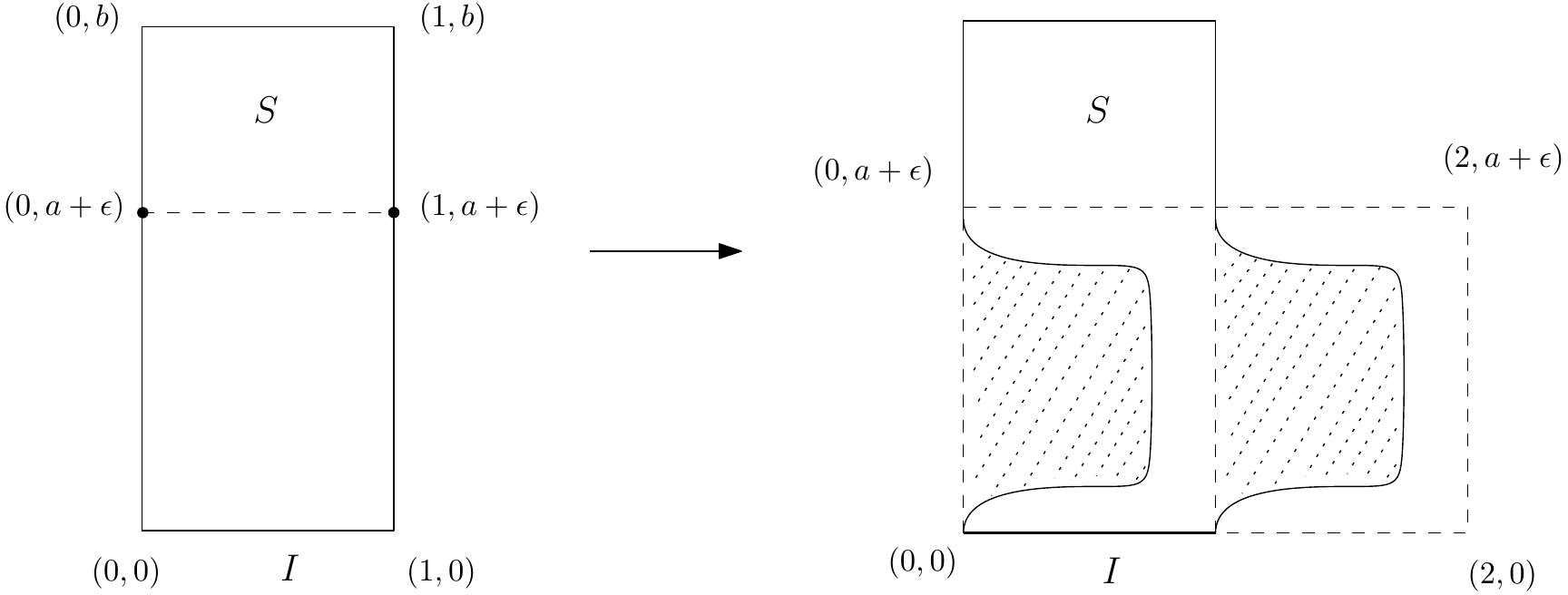}
  \caption{Behavior of $\phi_H^1(L(a,b))$ on the $w$-plane.} \label{figure_proj_z}
\end{figure}
\medskip

\noindent On the other hand, for the behavior on the $z$-plane, there are two extreme cases. 
\begin{itemize}
\item[(a)] For $w \in I$, i.e., $w = (x,0)$ for $x \in [0,1]$, since $\chi(0) = 1$, the first factor in (\ref{ham-flow-H}) is simply $\phi_G^1(z)$ which by definition displaces $\partial D(a)$ (since it displaces $D(a)$). 
\item[(b)] For $w$ near $S \cap \partial([0,1] \times [a+\ep,b])$ where $S$ is the shaded region, since $\chi(y) = 0$, the second factor in (\ref{ham-flow-H}) is identity, so $\partial D(a)$ stays the same. 
\end{itemize}

Next, consider a Hamiltonian isotopy $\{\psi_t\}_{t \in [0,1]}$ on the $w$-plane that wraps the region $S$ around the deformed $w$-projection to overlap the rectangle $[0,1] \times [0,a+\ep]$, see Figure \ref{figure_wrap}.
\begin{figure}[h]
  \centering
      \includegraphics[scale=0.8]{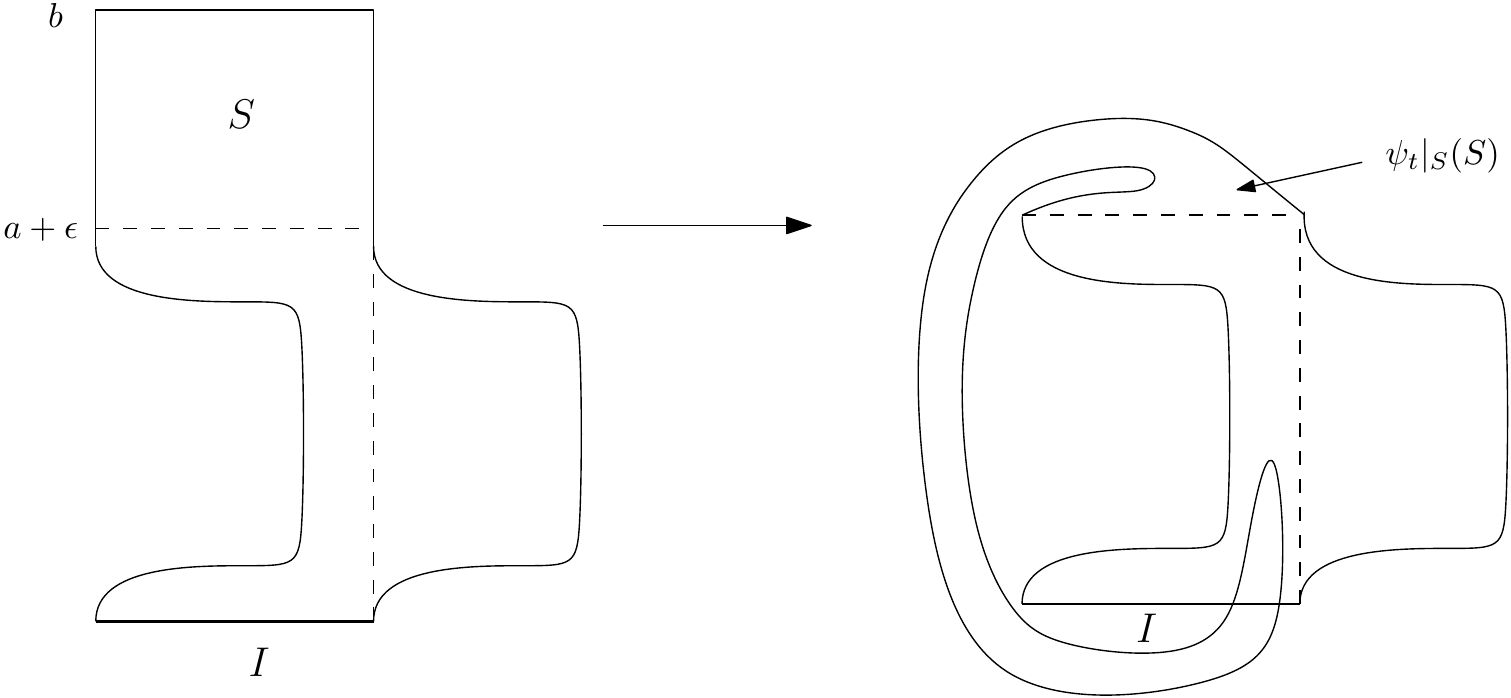}
  \caption{Wrap the region $S$.} \label{figure_wrap}
\end{figure}
Although on the $w$-plane, there are intersection points of $\psi_t|_{S}(S)$ with the line segment $I$, by our construction, viewed in the 4-dimension there will be no extra intersections. Indeed, for any point $(z,w)$ with $w$ near $S \cap \partial([0,1] \times [0,a+\ep])$, the corresponding $z \in D(a)$, but by (a) above for any point $(z,w)$ with $w \in I$, the corresponding $z$ lies in $\phi_G^1(D(a))$ which is disjoint from $D(a)$. Inductively applying this wrapping construction as shown in Figure \ref{figure_more_wrap}, then, up to an arbitrarily small neighborhood of the sides $\{0\} \times [0,1]$, $[0, a+ \ep] \times \{1\}$ and $[0,1] \times \{a+\ep\} $, we can wrap the entire $S \cap \partial([0,1] \times [a+\ep, b])$ into the shaded region inside $[0,1] \times [0,a+\ep]$ as shown in Figure \ref{figure_more_wrap}.
\begin{figure}[h]
  \centering
      \includegraphics[scale=0.7]{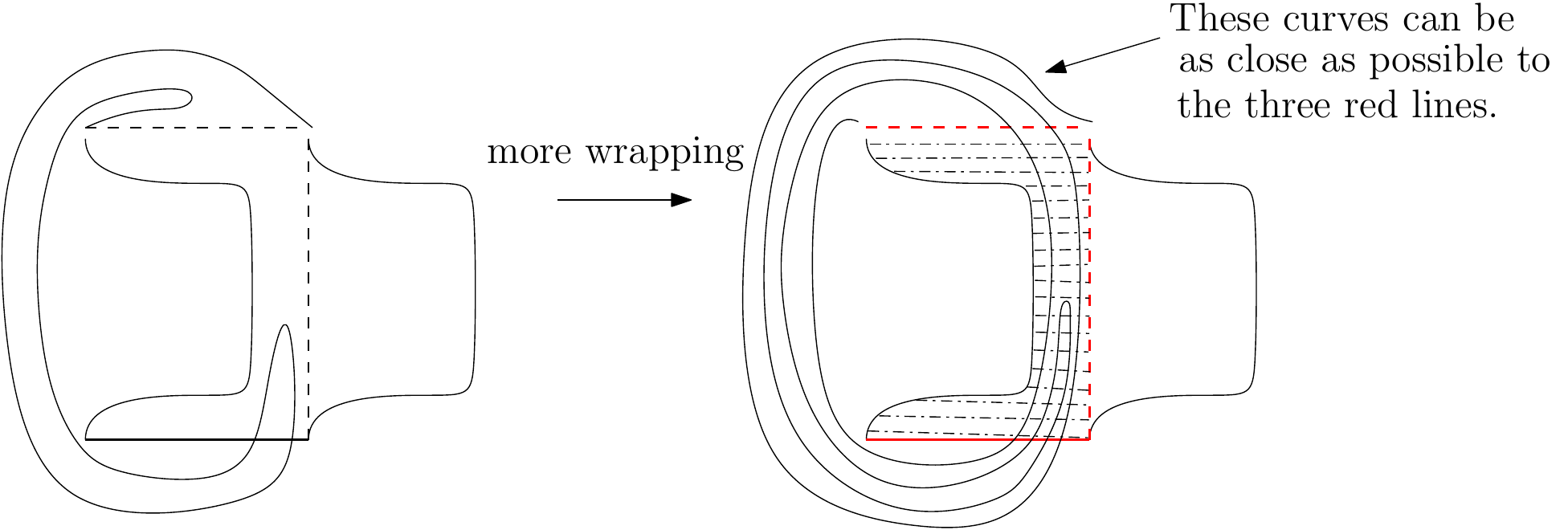}
  \caption{More wrappings.} \label{figure_more_wrap}
\end{figure}
Here, we emphasize that the extra room from $a+\ep$ to, say, $a+ 2 \ep$, that is used to construct this wrapping is guaranteed by the hypothesis that $b>a$ (and our choice $\ep< \frac{b-a}{4}$, so $a+ 2 \ep<b$). 

The Hamiltonian isotopy of interest is the image, under a symplectomorphism $\mathds{1} \times \Phi$, of the concatenation of the two constructions above,
\begin{equation} \label{isotopy}
\{\phi_H^t(\tilde{L}(a,b))\}_{t \in [0,1]} \#\{\psi_t(\phi_H^1(\tilde{L}(a,b)))\}_{t \in [0,1]}.
\end{equation}
Here $\Phi$ is a symplectomorphism of the $w$-plane that maps 

(i) $[0,2] \times [0,a+\eps] \cup [0,1] \times [a,y]$ into $D(a+y+\eps)$ for $y \geq a+\ep$, and 

(ii) each the horizontal rectangle $[0,2] \times [0,y]$ into $D(2y+2\ep)$ for $y \in [0,a+\ep]$. 
See Figure \ref{figure_rec_disk} for a layer-by-layer picture of such a map $\Phi$ for (ii). The existence of such symplectomorphisms follows Lemma 3.1.5 in the book \cite{Sch-book}.
\begin{figure}[h]
  \centering
      \includegraphics[scale=0.8]{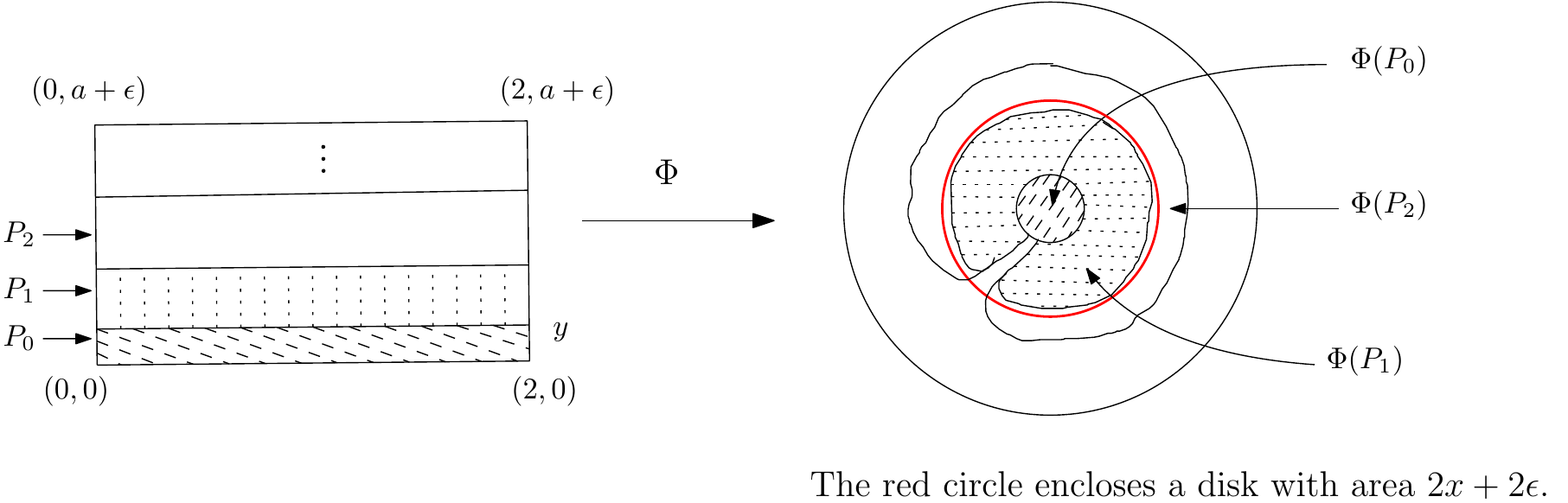}
  \caption{The map $\Phi$ on $[0,2] \times [0, a+\ep]$.} \label{figure_rec_disk}
\end{figure}

Note that both $\Phi( [0,1] \times [0,b] )$ and $D(b)$ lie inside $D(a+b+\ep)$. Therefore without loss of generality, we may assume $\Phi( [0,1] \times [0,b] ) = D(b)$, that is, our isotopy starts at the standard product $L(a,b)$.

We need to keep track of the moment image $(\pi|z|^2, \pi|w|^2)$ along the Hamiltonian isotopy and have the following observations. 
\begin{itemize}
\item[Case $\alpha$.] Let $(z,w) = (z, (x,y)) \in \tilde{L}(a,b)$ with $y \in [0, a+ \ep]$. We denote the image of $(z,w)$ under $\phi^t_H$ by $(z', w')$, and this point is described by (\ref{ham-flow-H}). In particular the $y$ coordinate is invariant under the flow. Hence, under the symplectomorphism $\mathds{1} \times \Phi$, from (\ref{ham-flow-H}) we get the relation, 
\begin{equation} \label{disk-rel}
\pi|\Phi(w')|^2 \leq 2y + \ep \,\,\,\,\mbox{and}\,\,\,\, z' \in D(2a - y + C(y,a) \ep)
\end{equation}
for some constant $C(y,a)$ only depending on $y$ and $a$. This constant $C(y,a)$ can be calculated based on the values of function $\chi(y)$. In particular, when $y = a + \ep$, we have $C(y,a) = 1$ (which means that $\pi|z|^2 \in D(a)$ as expected). The relation (\ref{disk-rel}) yields
\begin{align*}
2a - y + C(y,a) \ep & \leq 2a - \frac{\pi |\Phi(w')|^2 - \ep}{2} + C(y,a) \ep  \\
& = 2a - \frac{\pi |\Phi(w')|^2}{2} + \left( C(y,a) + \frac{1}{2} \right) \ep,
\end{align*}
and then, importantly, 
\begin{align*} 
\pi|z'|^2 & \leq 2a - y + C(y,a) \ep \\
& \leq 2a - \frac{\pi |\Phi(w')|^2}{2} + \left( C(y,a) + \frac{1}{2} \right) \ep.
\end{align*}
Such points $(z', w')$ are fixed by the flow $\psi_t$.

\item[Case $\beta$.] Let $(z,w)  = (z, (x,y)) \in \phi_H^1(\tilde{L}(a,b))$ with $y \in [a +2\ep, b]$. Such points are fixed by $\phi^H_y$, and the flow of $\psi_t$ fixes the $z$ coordinate and leaves the $w$ coordinate inside an $\eps$ neighborhood of $[0,1] \times [0,b]$. 

\end{itemize}

Therefore, we obtain a Lagrangian isotopy 
with its moment image lying in regions corresponding to either Case $\alpha$ or Case $\beta$ as above. The following graph illustrates these regions, for brevity, when $\ep \to 0$. 
\begin{equation*} \label{moment-image}
 \includegraphics[scale=0.8]{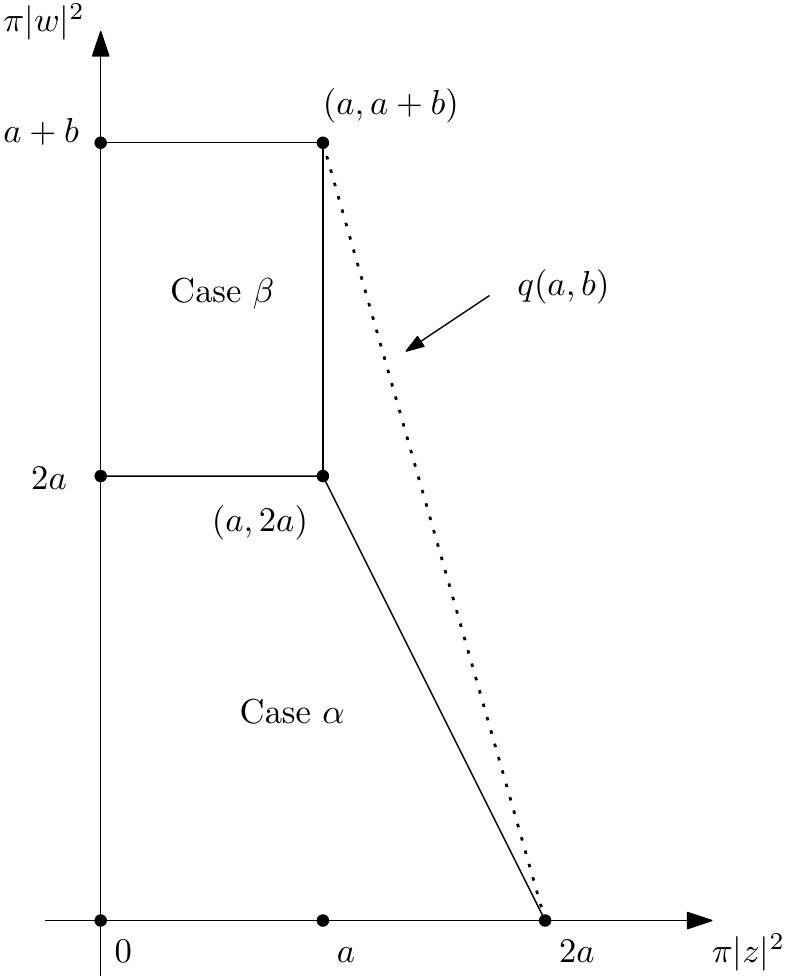}
\end{equation*}
The union of these regions is contained in the quadrilateral region defined by $q(a,b) \subset \R^2$ in our hypothesis. 
We note in ${\rm Case}\,\beta$ the points $\psi_1(z,w)$ lie in an $\eps$ neighborhood of $D(a) \times [0,1]$. Therefore the image under $\mathds{1} \times \Phi$ lies in $D(a) \times D(2a)$ and the moment image is described only by the lower trapezium in the graph above.
In other words, at $t=1$ the moment image of the Lagrangian isotopy (\ref{isotopy}) lies in an arbitrarily small neighborhood of $q(a,a)$.
Hence, we complete the proof. \end{proof}

In constructing lifts, our strategy will be to divide $\gamma$ into segments, and on each segment lift either by inclusions or by the rolled up Lagrangian embeddings described in Theorem \ref{construction}. The following lemma says that a Hamiltonian isotopy, which is also provided by Theorem \ref{construction} is enough to piece these lifts together.
In other words we are free to concatenate paths with endpoints lying in the same path component of a fiber.

\begin{lemma}\label{matching} Let $\gamma: [0,2] \to \R^2$ and $L_t$, $0 \le t \le 1$ and $M_t$, $1 \le t \le 2$ be smooth families of Lagrangian tori in $\mathcal{L}(X)$ such that $\P(L_t) = \gamma(t)$ and $\P(M_t) = \gamma(t)$ and $L_1$ is Hamiltonian isotopic to $M_1$ in $X$, that is, $L_1$ and $M_1$ lie in the same component of the fiber over $\gamma(1)$. Then $\gamma(t)$ has a smooth lift $N_t$ with $N_t = L_t$ for $t<1-\epsilon$ and $N_t = M_t$ for $t>1+\epsilon$.
\end{lemma}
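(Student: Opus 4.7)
The plan is to use the hypothesis that $L_1$ and $M_1$ lie in the same path-component of the fiber $\mathcal P^{-1}(\gamma(1))$, which by the discussion preceding Definition \ref{dfn-knotted} (citing Chaperon) yields a Hamiltonian isotopy $\{\Phi_s\}_{s\in[0,1]}$ of $X$ with $\Phi_0=\mathrm{id}$ and $\Phi_1(L_1)=M_1$. With $\Phi$ in hand, there are two sub-tasks: (a) deform the family $L_t$ near $t=1$ so that its endpoint is exactly $M_1$ rather than merely Hamiltonian isotopic to it in $X$; and (b) smoothly glue the resulting deformed family to $M_t$ across $t=1$.

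For (a) I would pick a smooth cutoff $\rho_1\colon[0,1]\to[0,1]$ with $\rho_1\equiv 0$ on $[0,1-\tfrac{\epsilon}{2}]$ and $\rho_1\equiv 1$ on $[1-\tfrac{\epsilon}{4},1]$ (all derivatives vanishing at the ends of the transition interval), and set $\hat L_t:=\Phi_{\rho_1(t)}(L_t)$. Since each $\Phi_s$ is a Hamiltonian diffeomorphism of $X$, it preserves the area class, so $\mathcal P(\hat L_t)=\mathcal P(L_t)=\gamma(t)$; moreover $\hat L_t=L_t$ on $[0,1-\tfrac{\epsilon}{2}]$, $\hat L_t=\Phi_1(L_t)$ on a small left-neighborhood $[1-\delta,1]$ of $1$, and $\hat L_1=M_1$.

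For (b) I would work inside a Weinstein neighborhood $U\subset X$ of $M_1$, identified with a neighborhood of the zero section in $T^*M_1$. For $t$ in a sufficiently short interval $[1-\delta',1+\delta']$ with $\delta'<\delta$, both $\hat L_t$ (for $t\le 1$) and $M_t$ (for $t\ge 1$) sit in $U$ as graphs of closed one-forms $\alpha_t$ and $\beta_t$ on $M_1$, with $\alpha_1=\beta_1=0$. The area class of the graph of a closed one-form differs from that of the zero section by the cohomology class of the form, under the canonical identification $H^1(M_1;\R)\simeq\R^2$ by periods on the chosen basis of $H_1(M_1;\Z)$; hence $[\alpha_t]=[\beta_t]=\gamma(t)-\gamma(1)$. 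Fixing a flat metric on $M_1$ and taking Hodge decompositions yields $\alpha_t=\omega_t+dh^\alpha_t$ and $\beta_t=\omega_t+dh^\beta_t$ with a common harmonic part $\omega_t$, where $h^\alpha_t,h^\beta_t$ are smooth families of mean-zero functions with $h^\alpha_1=h^\beta_1=0$. Extending $h^\alpha_t$ arbitrarily smoothly past $t=1$ and $h^\beta_t$ smoothly before $t=1$, I choose a cutoff $\rho_2\colon[1-\delta',1+\delta']\to[0,1]$ equal to $1$ near the left endpoint and $0$ near the right, and set
\[\eta_t:=\omega_t+d\bigl(\rho_2(t)h^\alpha_t+(1-\rho_2(t))h^\beta_t\bigr).\]
Then $\eta_t$ is a smooth family of closed one-forms with $[\eta_t]=\gamma(t)-\gamma(1)$, coinciding with $\alpha_t$ near $1-\delta'$ and with $\beta_t$ near $1+\delta'$; its graph in $U$ gives a smooth family $\tilde N_t$ of Lagrangians in $X$ lifting $\gamma$ on $[1-\delta',1+\delta']$ and agreeing with $\hat L_t$, respectively $M_t$, at the two endpoints.

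The lift $N_t$ is then assembled by concatenating $L_t$ on $[0,1-\tfrac{\epsilon}{2}]$, $\hat L_t$ on $[1-\tfrac{\epsilon}{2},1-\delta']$, $\tilde N_t$ on $[1-\delta',1+\delta']$, and $M_t$ on $[1+\delta',2]$, with $\delta'$ chosen small enough that every splicing falls in the constant region of the relevant cutoff, so smoothness is automatic. The main obstacle I expect is step (b): one must verify that the area-class shift for graphs of closed one-forms really is the cohomology class, so that the harmonic parts of $\alpha_t$ and $\beta_t$ genuinely coincide and the exact-form correction $d(\rho_2 h^\alpha_t+(1-\rho_2)h^\beta_t)$ preserves the cohomology class $\gamma(t)-\gamma(1)$. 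Once this identification is in place the rest is a standard bump-function interpolation.
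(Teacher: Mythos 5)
Your proposal is correct and follows essentially the same route as the paper's proof: first reparametrize the Hamiltonian isotopy by a cutoff to make the two families agree at $t=1$, then pass to a Weinstein neighborhood of $M_1$ where nearby tori are graphs of closed one-forms $r\,d\theta_1+s\,d\theta_2+dg$ and interpolate the exact parts. Your Hodge-decomposition phrasing is just a more explicit version of the paper's statement that $r,s$ are determined by $\P$ and only the function $g$ needs to be smoothed.
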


\begin{proof} Let $\Phi_t \in {\rm Ham}(X)$ be a Hamiltonian flow with $\Phi_0 = \rm{I}$ and $\Phi_1(L_1) = M_1$.
First we replace $L_t$ by $\tilde{L}_t = \Phi_{f(t)}(L_t)$ where $f(t)=0$ for $t<1-\epsilon$ and $f(1)=1$. Then $\tilde{L}_t$ and $M_t$ together give a continuous lift of $\gamma(t)$.

To smooth a possible corner at $\tilde{L}_1 = M_1$ we first identify a neighborhood of $M_1$ in $X$ with a neighborhood of the zero section in $T^* \T^2$. Then Lagrangian tori near $M_1$ can be identified with the graphs of $1$ closed forms $\alpha = r d\theta_1 + sd\theta_2 + dg$ where $\theta_1$, $\theta_2$ are coordinates on $\T^2$ and $g(\theta_1, \theta_2)$ is a smooth function. Here $r$ and $s$ are uniquely defined, and $g$ is also uniquely defined if we insist that $\int g =0$. Moreover, we may assume that up to a translation the map $\P$ is given by $\P({\rm{gr}}(\alpha)) = (r,s)$. Now finding a smooth lift just requires replacing the family of $g$ corresponding to $\tilde{L}_t$ and $M_t$ by a smooth family of functions.
\end{proof}



Let $X_{\Omega}$ be a toric domain in $\R^4$ with moment image $\Omega$. Recall that a Type-I path is a path that entirely lies in $\Omega^+: = \Omega \cap \{r \leq s\}$ and any other path with the starting point in $\Omega^+$ is a Type-II path. The following corollary of Theorem \ref{construction} provides a useful sufficient condition to lift a general path. 

\begin{cor} [Path lifting criterion] \label{thm-lift} Let $X_{\Omega} \subset \R^4$ be a toric domain and let $\gamma = \{(r_t, s_t)\,|\, r_t < s_t\}_{t \in [0,1]}$ be an oriented path in ${\rm Sh}^+_H(X_{\Omega})$. Then $\gamma$ lifts to $\mathcal L(X_{\Omega})$ if it satisfies the following property: there exists a concatenation of sub-paths to form $\gamma$ such that any sub-path is either Type-I or is Type-II on an interval $[t_0, t_1]$ and satisfies the following conditions for some $t_* \in (t_0, t_1]$:
\begin{itemize}
\item[(II-i)] $\gamma|_{[t_0, t_*]} \in \Omega^{+}$ but $\gamma|_{[t_*,t_1]} \not\subset \Omega^+$,
\item[(II-ii)] $q(r_{t_*}, s_{t_*}) \subset {\rm int}(\Omega)$, 
\item[(II-iii)] $q(r_t, r_t) \subset {\rm int}(\Omega)$ for any $t \in [t_*, t_1]$;
\end{itemize}
moreover, if a Type-II sub-path concatenates further with a Type-I sub-path, then the condition (II-ii) above strengthens to hold also for $t = t_1$. \end{cor}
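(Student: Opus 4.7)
The plan is to lift each sub-path of the concatenation separately and glue the pieces using Lemma \ref{matching}. For a Type-I sub-path on an interval $[t_0, t_1]$, the obvious lift by product tori $\{L(r_t, s_t)\}_{t \in [t_0, t_1]}$ works since these all lie inside $X_\Omega$ by Example \ref{image-path}. The real work is lifting a single Type-II sub-path.

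For a Type-II sub-path on $[t_0, t_1]$ satisfying (II-i)--(II-iii), I would split the lift at $t = t_*$. On $[t_0, t_*]$, use product tori $\{L(r_t, s_t)\}$, which lie in $X_\Omega$ by (II-i). On $[t_*, t_1]$, use a smoothly varying family of \emph{rolled-up} Lagrangians $L_t^{\mathrm{rolled}}$ with area class $(r_t, s_t)$ satisfying $L_t^{\mathrm{rolled}} \subset Q(r_t, r_t)$, produced by running Theorem \ref{construction} in a one-parameter family; condition (II-iii) then forces $L_t^{\mathrm{rolled}} \subset X_\Omega$ since $q(r_t, r_t) \subset \mathrm{int}(\Omega)$. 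At the meeting point $t_*$, Theorem \ref{construction} applied with $a = r_{t_*}$, $b = s_{t_*}$, and $\epsilon$ small enough that $Q(r_{t_*}+\epsilon, s_{t_*}+\epsilon) \subset \mathrm{int}(\Omega)$ (which is possible by (II-ii)) will furnish a Hamiltonian isotopy inside $X_\Omega$ between $L(r_{t_*}, s_{t_*})$ and $L_{t_*}^{\mathrm{rolled}}$, so Lemma \ref{matching} glues the two pieces into a smooth lift of $\gamma|_{[t_0, t_1]}$.

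Concatenating successive sub-paths is again handled by Lemma \ref{matching}. The one delicate case is when a Type-II sub-path ending at $t_1$ is followed by a Type-I sub-path, since the lift of the latter begins with the product torus $L(r_{t_1}, s_{t_1})$, so I will need $L_{t_1}^{\mathrm{rolled}}$ to be Hamiltonian isotopic to $L(r_{t_1}, s_{t_1})$ in $X_\Omega$. This is exactly the content of the strengthened (II-ii) at $t_1$: running Theorem \ref{construction} in reverse with $\epsilon$ sufficiently small (using $q(r_{t_1}, s_{t_1}) \subset \mathrm{int}(\Omega)$) keeps the isotopy inside $X_\Omega$. Concatenations of two Type-II sub-paths meeting at a rolled-up endpoint (shared $L^{\mathrm{rolled}}$ by continuity), or of a Type-I sub-path followed by a Type-II sub-path (shared product torus), match automatically.

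The main technical obstacle I expect is producing a genuinely smooth family $(a,b) \mapsto L^{\mathrm{rolled}}(a,b)$ from which $L_t^{\mathrm{rolled}} := L^{\mathrm{rolled}}(r_t, s_t)$ arises by restriction. I would address this by revisiting the construction in the proof of Theorem \ref{construction} and verifying that each auxiliary choice --- the displacing Hamiltonian $G(z)$ supported in $D(2a+\epsilon)$, the cutoff $\chi(y)$ on $[0, a+\epsilon]$, the wrapping isotopy $\psi_t$, and the symplectomorphism $\Phi$ flattening rectangles into disks --- can be arranged to vary smoothly in $(a,b)$ over the compact parameter range $\{(r_t, s_t) : t \in [t_*, t_1]\}$. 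Since $[t_*, t_1]$ is compact and (II-iii) is an open condition, a uniform $\epsilon > 0$ will suffice. The full corollary then follows by induction on the number of sub-paths in the concatenation.
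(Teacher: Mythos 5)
Your proposal follows the paper's proof essentially verbatim: Type-I segments and the $[t_0,t_*]$ portions of Type-II segments lift by product tori, the $[t_*,t_1]$ portions lift by a smoothly varying family of rolled-up Lagrangians from Theorem \ref{construction} contained in $X_\Omega$ via condition (II-iii), and the pieces are glued with Lemma \ref{matching} using the Hamiltonian isotopy of Theorem \ref{construction}, whose support $Q(r_{t_*},s_{t_*})$ (resp.\ $Q(r_{t_1},s_{t_1})$) lies in $X_\Omega$ by (II-ii) (resp.\ its strengthening). The only difference is that you spell out the smooth dependence of the construction on $(a,b)$ somewhat more explicitly than the paper, which simply asserts that the disk area and rectangle height can be varied continuously.
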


\begin{proof} [Proof of Corollary \ref{thm-lift}] By Example \ref{image-path}, any Type-I sub-path and the part $\gamma|_{[0, t_*]}$ of a Type-II sub-path always lifts via product tori, so we need to consider the part $\gamma|_{[t_*, t_1]}$ of a Type-II sub-path, which is not entirely contained in $\Omega^+$. But these segments lift using the family of rolled up embeddings described by Theorem \ref{construction} by condition (II-iii). To do this, starting from the Lagrangian $\psi_1(\phi_H^1(L(a,b)))$ as in Figure \ref{figure_more_wrap}, we can continuously change the areas of the disk $D(a)$ in the $z$-plane to be the given $r_t$ and the height $b$ of the rectangle in Figure \ref{figure_proj_z} to be the given $s_t$, for $t \in [t_*, t_1]$. The construction still applies since $r_t < s_t$ for all $t \in [0,1]$, and the resulting Lagrangian $L_t$ is Hamiltonian isotopic to the product Lagrangian torus $L(r_t, s_t)$ since we can run the constructions $\psi_t$ and $\phi_H^t$ above in reverse (cf.~the requirement of the condition $a<b$ for the constructions above). Moreover, the moment image of $L_t$ lies in an arbitrarily small neighborhood of $Q(r_t, r_t)$

It remains to adjust things so that our Lagrangian embeddings match at the endpoints, say $t_1$, of the segments, and by Lemma \ref{matching} it is enough to show these embeddings are Hamiltonian isotopic in $X_{\Omega}$. But Theorem \ref{construction} gives such an isotopy with support in $Q(r_{t_1}, s_{t_1})$, and by condition (II-ii) and its strengthening we have that $Q(r_{t_1}, s_{t_1}) \subset X_{\Omega}$. In this way, we get the desired conclusion. \end{proof}

\subsection{Proof of (II) in Theorem \ref{thm-path-lift-ball}, \ref{thm-path-lift-ellipsoid}, and \ref{thm-path-lift-polydisk}} \label{ssec-path-lift-ball-ellipsoid_2} 

\subsubsection{Proof of (II) in Theorem \ref{thm-path-lift-ball}} Recall that the moment image $\mu(B^4(R))$ is the right triangle $\Delta(R,R)$, intersecting $\R_{\geq 0}^2$ (in coordinate $(r,s)$) with points $(0, R)$ and $(R, 0)$. Condition (II-ii) in Corollary \ref{thm-lift}, $q(r_{t_*}, s_{t_*}) \subset {\rm int}(\Delta(R,R))$, is equivalent to the condition that vertex $(r_{t_*}, r_{t_*}+s_{t_*})$ lies strictly below the hypothenuse of $\Delta(R,R)$. Since the equation of the hypothenuse is $r+s = R$, the condition is
\[ r_{t_*} + (r_{t_*} + s_{t_*})<  R. \]
Similarly, Condition (II-iii) in Corollary \ref{thm-lift} implies $r_t < \frac{R}{3}$ for $t \in [0, T]$. 
Thus we complete the proof. \qed

\subsubsection{Proof of (II) in Theorem \ref{thm-path-lift-ellipsoid}} Recall that $\mu(E(a,b))$ is the right triangle $\Delta(a,b)$, intersecting $\R_{\geq 0}^2$ (in coordinate $(r,s)$) with points $(0, b)$ and $(a, 0)$. We have two cases in terms of the condition (II-ii) of Corollary \ref{thm-lift}, due to the comparison between the slope of the hypothenuse of this triangle and the slope of the hypothenuse of the quadrilateral region $q(r_{t_*}, s_{t_*})$ (see Figure \ref{figure_slopes}). 
\begin{figure}[h]
  \centering
   \includegraphics[scale=0.85]{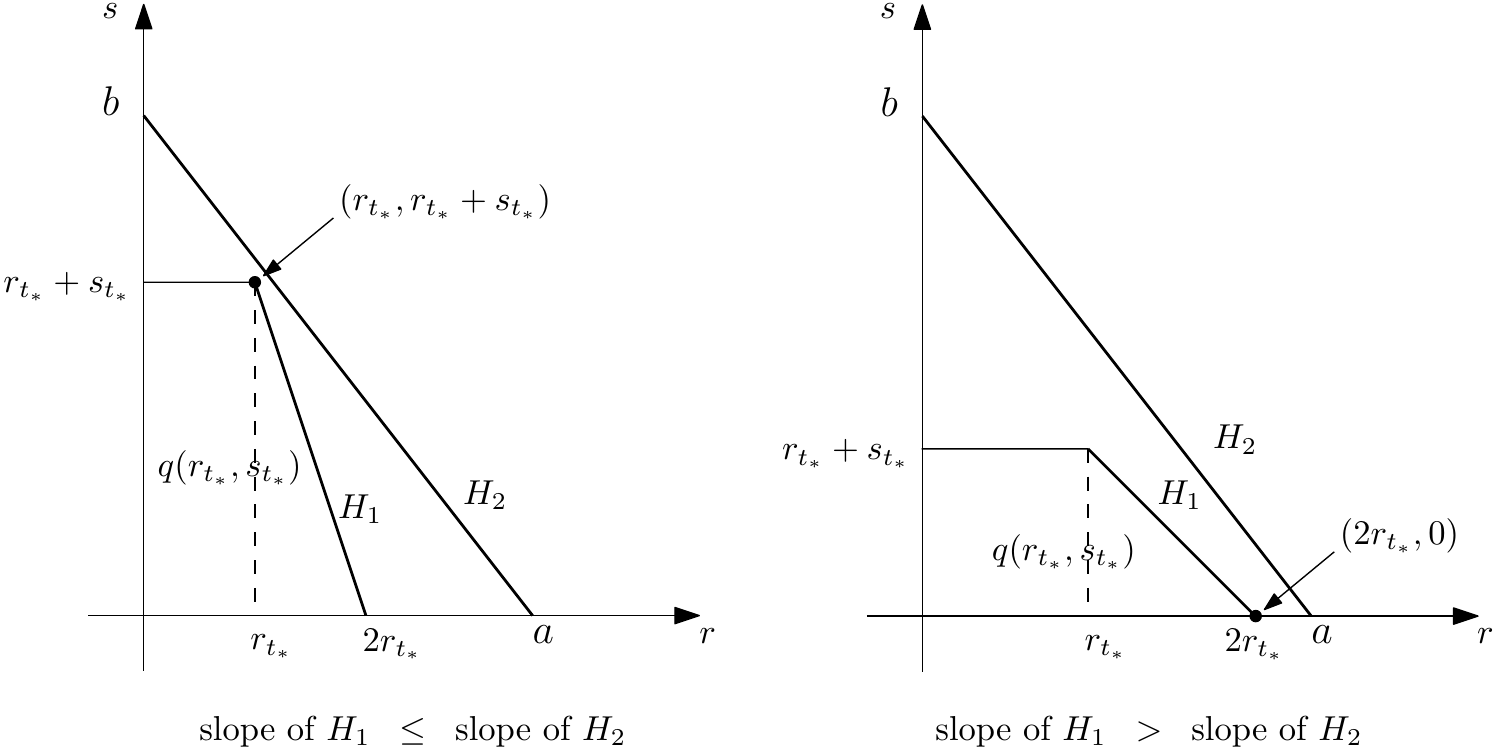} 
     \caption{Comparisons of slopes.} \label{figure_slopes}
\end{figure}
The slope $H_1$ is $- \frac{r_{t_*} + s_{t_*}}{r_*}$ while the slope of $H_2$ is $-k = \frac{b}{a}$. Then for the first case where $- \frac{r_{t_*} + s_{t_*}}{r_*} \leq -k$ (which is equivalent to $(k-1) r_{t_*} \leq s_{t_*}$), we have $q(r_{t_*}, s_{t_*}) \subset {\rm int}(\Delta(a,b))$ if and only if the vertex $(r_{t_*}, r_{t_*}+s_{t_*})$ lies strictly below the hypothenuse $H_2$. Since the line represented by $H_2$ is $\frac{r}{a} + \frac{s}{b} = 1$, we get
\[ r_{t_*} + s_{t_*} < s|_{r = r_*} = \left( 1- \frac{r_*}{a} \right) \cdot b = b - kr_*\]
which is equivalent to $(k+1)r_{t_*} + s_{t_*} < b$ as in (II-ii-1). Similarly, for the second case where $- \frac{r_{t_*} + s_{t_*}}{r_*} > -k$ (which is equivalent to $(k-1) r_{t_*} > s_{t_*}$), we have $q(r_{t_*}, s_{t_*}) \subset{\rm int}(\Delta(a,b))$ if and only if the vertex $(2r_*, 0)$ lies strictly on the left of $(a,0)$, that is, $2r_{t_*} < a$ as in (II-ii-2). 

In terms of the condition (II-iii) in Corollary \ref{thm-lift}, since the slope of the hypothenuse $H_1$ of the quadrilateral region $q(r_{t}, r_{t})$ is always $-2$ which is no greater than the slope of the hypothenuse $H_2$ (since we assume $k \geq 2$). Therefore the second case above is enough to show that (ii-iii) is implied by $2r_t < a$. 
Thus we complete the proof.\qed

\subsubsection{Proof of (II) in Theorem \ref{thm-path-lift-polydisk}} Recall that the moment image $\mu(P(c,d))$ is the rectangle $\Box(c,d)$ with intercepts $(c,0)$ and $(0,d)$. Condition (II-ii) in Corollary \ref{thm-lift}, $q(r_{t_*}, s_{t_*}) \subset {\rm int}(\Box(a,b))$, is equivalent to the vertex $(2r_*, 0)$ lying on the left of $(c,0)$ (cf.~the condition (II-ii-2) in the proof of Theorem \ref{thm-path-lift-ellipsoid}) and the height $r_* + s_*$ lower than $d$. In other words, we have 
\[ r_* < \frac{c}{2} \,\,\,\,\mbox{and}\,\,\,\, r_* + s_* <d. \]
Similarly, Condition (II-iii), $q(r_{t}, r_{t}) \subset {\rm int}(\Box(c,d))$, is determined by the vertex $(2r_t, 0)$, which implies $r_t < \frac{c}{2}$ for $t \in [0,T]$. Thus we complete the proof. \qed

\subsection{Proof of Theorem \ref{knotted-ball}, \ref{knotted-ellipsoid}, and \ref{knotted-polydisk}} \label{ssec-thm-knotted-ball-ellipsoid} Now, we are ready to see how these path lifting criterions easily imply the existence of knotted Lagrangian tori in $B^4(R)$, $E(a,b)$, and $P(c,d)$. We will give the proof of these three theorems simultaneously. 

For any area classes $(r,s)$ in the given region (\ref{knotted-ball-1}) or (\ref{knotted-ellipsoid-1}) or (\ref{knotted-polydisk-1}), consider the path $\gamma = \{\gamma(t)\}_{t \in [0,1]}$ with
\[ \gamma(t) = (r, (1-t)s + ts_*)\] 
where $(r, s_*) \notin \Delta(R,R)$ or $(r, s_*) \notin \Delta(a,b)$ or $(r, s_*) \notin \Box(c,d)$,  respectively. By Theorem \ref{shapecalc}, we know $\gamma \subset {\rm Sh}_H^+(B^4(R))$, $\gamma \subset {\rm Sh}_H^+(E(a,b))$, and $\gamma \subset {\rm Sh}_H^+(P(c,d))$, respectively. Choose any other path $\tilde{\gamma}$ that sits entirely inside $\Delta(R,R)^+$ or $\Delta(a,b)^+$ or $\Box(c,d)^+$ such that the whole path satisfies (II-iii), starts at a point satisfying (II-ii) and ends at $(r,s_*)$. Now, consider the concatenation 
\[ \tilde{\gamma} \# \gamma \in {\rm Sh}_H^+(B^4(R)) \,\,\mbox{or}\,\, {\rm Sh}_H^+(E(a,b))\,\,\mbox{or}\,\, {\rm Sh}_H^+(P(c,d)). \]
By (II) in Theorem \ref{thm-path-lift-ball}, Theorem \ref{thm-path-lift-ellipsoid}, and Theorem \ref{thm-path-lift-polydisk}, respectively, we know $\tilde{\gamma} \# \gamma$ lifts to a Lagrangian isotopy of tori. In particular, there exists a Lagrangian sub-isotopy which projects via $\P$ to $\gamma$, starting from $(r,s)$. If there do not exist any knotted Lagrangian tori in the fiber $\P^{-1}((r,s))$, then up to Hamiltonian isotopy in $B^4(R)$ or $E(a,b)$ or $P(c,d)$, we can assume this sub-isotopy starts from the product Lagrangian torus $L(r,s)$. Then by Definition \ref{dfn-path-lift}, $\gamma$ lifts and it contradicts (I) in  Theorem \ref{thm-path-lift-ball}, Theorem \ref{thm-path-lift-ellipsoid}, and Theorem \ref{thm-path-lift-polydisk}, respectively. Thus we complete the proof. \qed

\section{An exotic example} \label{sec-ex-exotic}

As explained in the introduction, the path lifting can be subtle due to various reasons. In this section, we illustrate this complexity via an exotic example. Consider $\gamma_1$ and $\gamma_2$ in ${\rm Sh}_H^+(E(a,b))$ with $k: = \frac{b}{a} = 3$ shown in Figure \ref{figure-ex-exotic}.
\begin{figure}[h]
  \centering
      \includegraphics[scale=0.85]{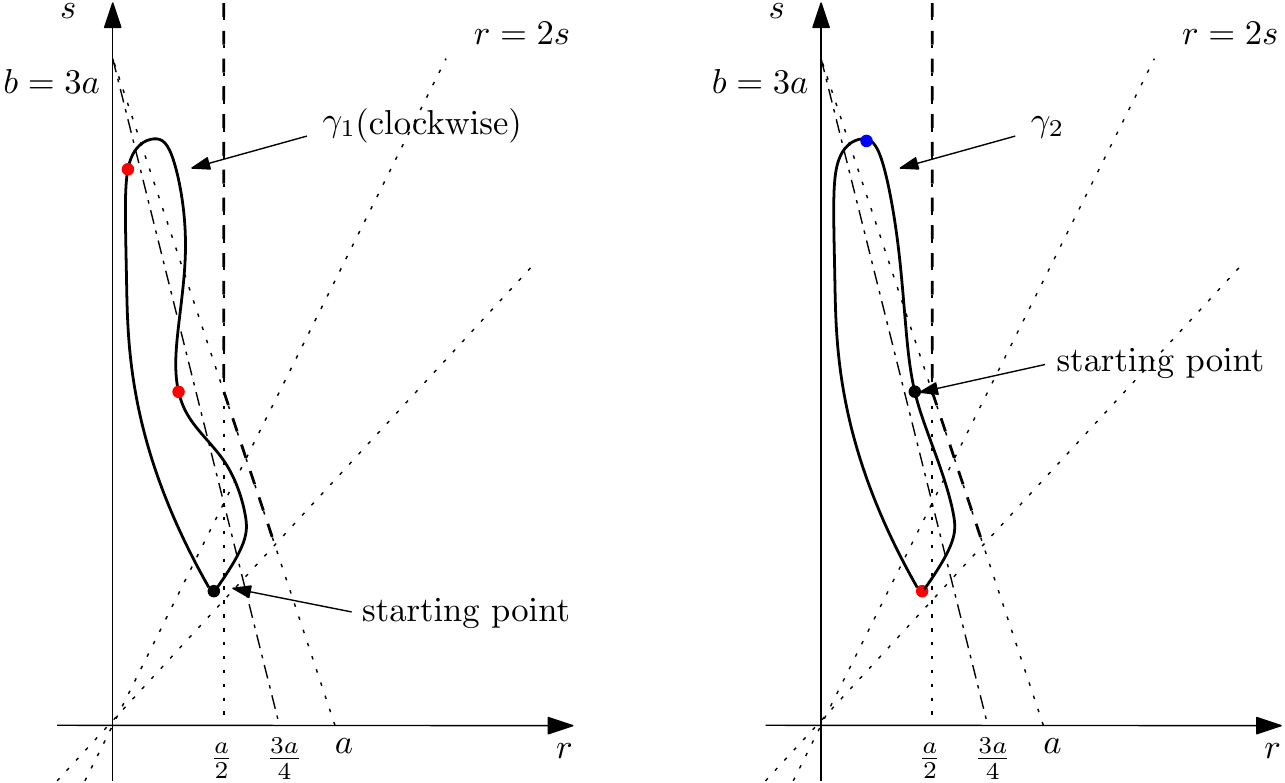} 
  \caption{$\gamma_1$ lifts, and $\gamma_2$ lifts only viewed clockwise.} \label{figure-ex-exotic}
\end{figure}
In the left picture, $\gamma_1$ when viewed clockwise lifts to ${\rm Sh}_H^+(E(a,b))$. The picture indicates the concatenation points (red), and the sub-paths change types from Type-I to Type-II, then back to Type-I (see section \ref{ssec-path-lifting} ).

On the other hand, in the right picture, the path $\gamma_2$ is a small perturbation of $\gamma_1$ as a geometric path. If we view it clockwise, then this is an example showing a certain {\it monodromy} phenomenon. Note that $\gamma_2$ lifts by (II) in Theorem \ref{thm-path-lift-ellipsoid} and the picture indicates the concatenation point. Explicitly, it starts from a Type-I sub-path, followed by a Type-II sub-path. This Type-II sub-path ends at the same point as the starting point. Meanwhile, since this Type-II sub-path never comes back to the ``flexible'' region below the line $4r +s = b$, as in the left picture, the Lagrangian torus at the endpoint is not a product torus. Therefore, this closed loop $\gamma_2$ lifts but not to a {\it loop} of Lagrangian tori. 

Finally, if we view $\gamma_2$ counterclockwise, i.e., considering its reverse path $\overline{\gamma}_2$,  then it does {\it not} lift. Otherwise, the sub-path from the starting point to the blue dot will lift, and it violates the obstruction, i.e., (I) in Theorem \ref{thm-path-lift-ellipsoid}. This shows that the orientation in the path lifting also matters. 

\section{Obstructions to symplectic embeddings}

In this section, we will demonstrate how to use path lifting to obstruct symplectic embeddings between domains in $\R^4$. Let us start from the following result. 

 \begin{prop} \label{prop-obs-domain} Let $X, Y$ be two toric domains in $\R^4$, and $\gamma$ be a path in ${\rm Sh}_H^{+}(X)$ that lifts to a Lagrangian isotopy of tori in $\mathcal L(X)$, denoted by $\{L_t\}_{t \in [0, T]}$. If there exists a symplectic embedding $X \hookrightarrow Y$ such that the image of $L_0$ is unknotted in $Y$, then $\gamma$ lies in ${\rm Sh}_H^{+}(Y)$ and lifts to $\mathcal L(Y)$. \end{prop}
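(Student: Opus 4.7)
My plan is to simply push forward the given lift by the symplectic embedding and then correct the starting condition using the unknottedness hypothesis. The proposition is essentially the formalization of the heuristic already sketched in subsection \ref{ssec-symp-emb}, so the real work is checking that each element of the definition of path lift is preserved.

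First, I would unpack the hypothesis. By the notation convention, the embedding $X \hookrightarrow Y$ is realized by a Hamiltonian diffeomorphism $\phi$ of $\R^4$ with $\phi(X) \subset \mathrm{int}(Y)$. Applying $\phi$ pointwise to the given lift $\{L_t\}_{t\in[0,T]}$ produces a smooth family $\{\phi(L_t)\}_{t\in[0,T]}$ of embedded Lagrangian tori contained in $Y$. Since $\phi$ is a global Hamiltonian diffeomorphism and each $L_t$ is Hamiltonian isotopic in $\C^2$ to a product torus, so is each $\phi(L_t)$; thus $\phi(L_t)\in\mathcal L(Y)$ for every $t$. Moreover, Hamiltonian diffeomorphisms preserve the area classes of Lagrangian tori (a Lagrangian isotopy realized by a Hamiltonian flow keeps the cohomology class of the restricted primitive constant), so $\mathcal P(\phi(L_t))=\mathcal P(L_t)=\gamma(t)$. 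In particular $\gamma(t)\in\mathrm{Sh}_H^+(Y)$ for every $t$, which already yields the inclusion $\gamma\subset \mathrm{Sh}_H^+(Y)$.

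To upgrade $\{\phi(L_t)\}$ to an honest lift in the sense of Definition \ref{dfn-path-lift}, the remaining task is to arrange that the family starts at the product torus $L(\mathcal P(\gamma(0)))\subset Y$. Here the unknottedness hypothesis enters: by definition, there exists a Hamiltonian isotopy $\{\Psi_s\}_{s\in[0,1]}$ of $Y$ with $\Psi_0=\mathds 1_Y$ and $\Psi_1(\phi(L_0))=L(\mathcal P(\gamma(0)))$. Reparametrizing in $s$ and concatenating as in Lemma \ref{matching}, define
\[
N_t \;:=\; \Psi_1(\phi(L_t))
\]
(smoothed near $t=0$ if needed by interpolating through the Hamiltonian isotopy $\Psi_s\circ\phi(L_0)$). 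Since $\Psi_1$ is Hamiltonian in $Y$, the family $N_t$ is still a smooth family of embedded Lagrangians in $Y$, each Hamiltonian isotopic in $\C^2$ to a product torus, and with $\mathcal P(N_t)=\mathcal P(\phi(L_t))=\gamma(t)$. By construction $N_0=L(\mathcal P(\gamma(0)))$, so $\{N_t\}_{t\in[0,T]}$ is a lift of $\gamma$ to $\mathcal L(Y)$.

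The only potential obstacle is purely formal and concerns the smooth matching at $t=0$ between the initial condition $L(\mathcal P(\gamma(0)))$ and the pushed-forward family $\Psi_1\circ\phi(L_t)$. This is precisely the type of concatenation addressed by Lemma \ref{matching}, whose hypotheses are satisfied since $N_0$ and $\Psi_1(\phi(L_0))$ lie in the same Hamiltonian isotopy class in $Y$ (they are equal). Hence the smooth lift exists and the proposition follows. I do not expect any analytic difficulty here; the entire content of the statement is the translation of the unknottedness assumption into the language of path lifts.
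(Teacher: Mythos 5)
Your proposal is correct and follows essentially the same route as the paper: push the lift forward by $\phi$, note that Hamiltonian diffeomorphisms preserve the fibers of $\P$ (giving $\gamma\subset{\rm Sh}_H^+(Y)$), and use unknottedness to correct the initial condition. The only cosmetic difference is that the paper uses the interpolated family $\Psi_{1-t}(\phi(L_t))$ while you apply the fixed time-one map $\Psi_1$ to the whole family; both work, and in your version the appeal to Lemma \ref{matching} is in fact superfluous since applying a single fixed diffeomorphism already yields a smooth family starting exactly at the product torus.
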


\begin{proof} The first conclusion directly comes from Proposition 7.1 in \cite{hindzhang}. It suffices to prove the second conclusion. Suppose $\gamma = \{(r_t, s_t) \in {\rm Sh}_H^+(X) \,|\, t \in [0,T]\}$. Since $\gamma$ lifts to $\mathcal L(X)$, the corresponding Lagrangian isotopy of tori $L = \{L_t\}_{t \in [0, T]}$ satisfies $\P(L_t) = (r_t, s_t)$. Suppose the symplectic embedding $X \hookrightarrow Y$ is $\phi$, and consider the Lagrangian isotopy 
\[ \phi(L) = \{\phi(L_t)\}_{t \in [0, T]} \subset \mathcal L(Y). \]
Similarly to Proposition 7.1 in \cite{hindzhang}, we know $\P(\phi(L_t)) = (r_t, s_t)$ for any $t \in [0, T]$. Meanwhile, by assumption, $\phi(L_0)$ is unknotted in $Y$, so by definition there exists a Hamiltonian isotopy $\Psi = \{\Psi_t\}_{t \in [0,T]}$ of $Y$ such that $\Psi_1(\phi(L_0)) = L(r_0, s_0)$. Then the following Lagrangian isotopy of tori in $Y$, 
\[ \tilde{L} = \{\Psi_{1-t}(\phi(L_t))\}_{t \in [0,T]} \subset \mathcal L(Y)\]
is the desired lift of $\gamma$ in ${\rm Sh}_H^+(Y)$ since ${\rm Ham}(Y)$ preserves the fiber of $\P$. In other words, $\gamma \subset {\rm Sh}_H^+(Y)$ lifts to $\mathcal L(Y)$ by $\tilde{L}$, and we complete the proof. \end{proof}

\subsection{Proof of Theorem \ref{emb-knotted}}\label{obs-domain_1} We will only give the proof of (1), and (2), (3) can be proved in a similar manner.  

For the given $(r,s)$, consider the {\it straight line} path $\gamma$ starting at $(r,s)$ and ending at $(0,x)$. Since it is a straight line with $2r+s >R$ and $x>R$, this path $\gamma$ lie entire outside the ``flexible'' region below the line $2r+s =R$. Meanwhile, again $x>R$ implies that $\gamma$ will escape the moment image $\Delta(R,R)$ eventually. Then on the one hand, since $\gamma$ lies entirely in the moment image $\Delta(1,x)$, Example \ref{image-path} implies that it lifts to $\mathcal L(E(1,x))$. On the other hand, if $\phi(L(r, s))$ is unknotted under the embedding $\phi: E(1,x) \hookrightarrow B^4(R)$, then Proposition \ref{prop-obs-domain} implies that $\gamma \subset {\rm Sh}_H^+(B^4(R))$ and it lifts as well. However, this contradicts to (I) in Theorem \ref{thm-path-lift-ball}. Therefore, we obtain the desired conclusion. \qed. 

\subsection{Proof of Theorem \ref{thm-obs-toric}} \label{obs-domain_2} The assumption (ii) implies that $\gamma$ lifts to $\mathcal L(X)$ since $\gamma$ lies entirely in $\mu(X)^+$. In particular, the starting point $(r_0, s_0)$ can be realized as the inclusion of the product Lagrangian torus $L(r_0, s_0) \hookrightarrow X$. Now, suppose that there exists a symplectic embedding $\phi: X \hookrightarrow E(a,b)$. By the first condition in the assumption (i), we have
\[ L(r_0, s_0) \subset E \xhookrightarrow{i} X \xhookrightarrow{\phi} E(a,b), \]
where $i$ is the inclusion. Since Corollary 1.6 in \cite{McD09} proves that the space of symplectic embeddings from $E$ to $E(a,b)$ denoted by ${\rm Emb}(E, E(a,b))$ is connected, the condition $E \subset E(a,b)$ implies that every symplectic embedding from $E$ to $E(a,b)$ is isotopic to the inclusion. In particular, the product Lagrangian torus $L(r_0,s_0) \subset E$ is unknotted under the symplectic embedding $\phi \circ i$. Then Proposition \ref{prop-obs-domain} implies that $\gamma \subset {\rm Sh}_H^+(E(a,b))$ and it lifts to $\mathcal L(E(a,b))$. This contradicts (I) in Theorem \ref{thm-path-lift-ellipsoid}. Therefore, we obtain the desired contradiction. \begin{figure}[h]
  \centering
      \includegraphics[scale=0.9]{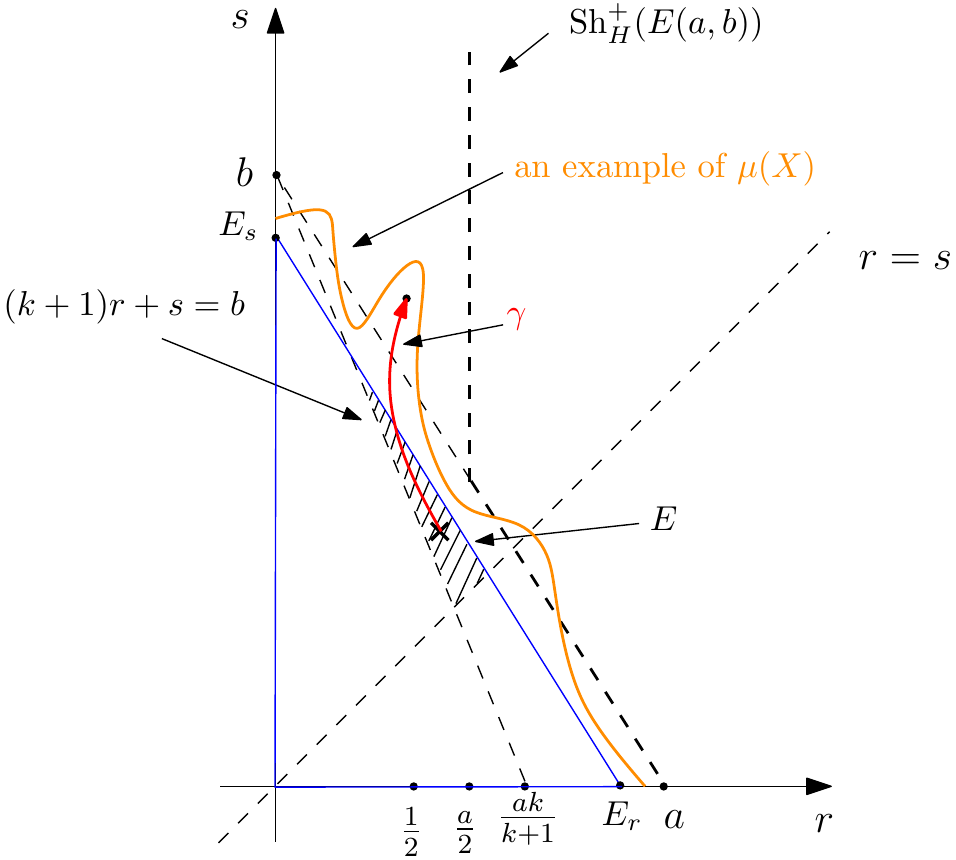}
  \caption{The obstruction is given by the red line segment.} \label{figure_obs_3}
\end{figure}
Figure \ref{figure_obs_3} shows a schematic picture of this proof, where $E = E_{\Delta(E_r, E_s)}$ and the moment image $\Delta(E_r, E_s)$ is shown as the blue triangle, $\mu(X)$ is described via the orange curve, and the obstruction is given by $\gamma$, shown as the red oriented path.  \qed 

\section{Appendix} \label{app} 
This section verifies the following result, up to a rescaling by $\frac{a}{k+1}$, that was used in (2) in Example \ref{emb-knotted-ex}. 

\begin{prop} \label{prop-ellipsoid-emb} For any $k \in \N$, $E(k, (k+1)^2) \hookrightarrow E(k+1, k(k+1))$. \end{prop}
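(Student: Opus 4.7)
The plan is to verify the embedding via McDuff's ellipsoid embedding theorem, which asserts that $E(a, b) \hookrightarrow E(c, d)$ holds if and only if $N_i(a, b) \leq N_i(c, d)$ for every $i \geq 0$, where $N(a, b)$ denotes the non-decreasing rearrangement (with multiplicities) of the multiset $\{ma + nb : m, n \in \Z_{\geq 0}\}$; equivalently, every ECH capacity of the source is bounded by the corresponding ECH capacity of the target.

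First I would simplify the target side: since $k(k+1) = k \cdot (k+1)$, we have $N(k+1, k(k+1)) = (k+1) \cdot N(1, k)$, where in $N(1, k)$ each nonnegative integer $j$ appears with multiplicity $\lfloor j/k \rfloor + 1$. Thus the desired termwise inequality reduces, for every real $C \geq 0$, to the lattice-point inequality
\[
\#\{(m, n) \in \Z_{\geq 0}^2 : mk + n(k+1)^2 \leq C\} \;\geq\; \#\{(m, n) \in \Z_{\geq 0}^2 : (k+1)(m + nk) \leq C\}.
\]
Both counts tabulate lattice points in right triangles of identical area $\frac{C^2}{2k(k+1)^2}$, so the inequality has no asymptotic slack.

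To establish this combinatorial inequality I would slice each triangle into horizontal strips and compare row sums. Writing $C = (k+1)D$, the right-hand side equals $\sum_{n = 0}^{\lfloor D/k \rfloor}(\lfloor D - nk \rfloor + 1)$, while the left-hand side equals $\sum_{n = 0}^{\lfloor D/(k+1) \rfloor}(\lfloor ((k+1)D - n(k+1)^2)/k \rfloor + 1)$. Setting $D = q(k+1) + r$ with $0 \leq r < k+1$ and applying the division algorithm row by row, the two sums can be matched up to a sign-controlled boundary term. Alternatively, one can construct an explicit injection between the two lattice sets by sending target points $(m', n')$ into the source by an affine rule of the form $(m, n) = (m' - \alpha \bmod (k+1),\, n' + \lfloor \alpha/(k+1) \rfloor)$ for a carefully chosen $\alpha = \alpha(m', n')$ absorbing the change of slope between the two hypotenuses.

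The main obstacle I anticipate is precisely this bookkeeping step. Because the two triangles have equal area, every floor-function boundary term must be handled exactly; there is no slack to absorb rounding. Once this short but careful case-analysis is complete, McDuff's theorem immediately yields the embedding $E(k, (k+1)^2) \hookrightarrow E(k+1, k(k+1))$.
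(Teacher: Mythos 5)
Your reduction is exactly the one the paper uses: invoke the Hutchings--McDuff criterion to translate the embedding into the termwise inequality of ECH capacities, and then rephrase that as the statement that for every $t\geq 0$ the triangle $\{mk+n(k+1)^2\leq t\}$ contains at least as many lattice points as $\{m(k+1)+nk(k+1)\leq t\}$. Your observation that the two triangles have equal area (so the embedding is volume-filling and there is no slack) is also correct and is noted in the paper.

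The problem is that everything after the reduction is a plan rather than a proof, and the part you defer --- ``the two sums can be matched up to a sign-controlled boundary term,'' or alternatively an injection via ``a carefully chosen $\alpha(m',n')$'' --- is the entire mathematical content of the proposition. You have not exhibited the matching, the boundary term, or the map $\alpha$, and as you yourself point out, the equal-area situation means every floor-function discrepancy must be controlled exactly; it is not evident that a naive row-by-row comparison closes without a genuine case analysis. The paper's proof spends several pages on precisely this step: it first shows (via Pick's theorem, after decomposing the source triangle into a trapezoid with integer vertices and a small residual triangle, together with a divisibility claim relating the fractional parts $B$ and $C$ of $\frac{A}{k+1}$ and $\frac{Ak}{k+1}$) that the two counts agree \emph{exactly} at the checkpoints $t=Ak(k+1)$, and then compares the increments of the two counting functions between consecutive checkpoints, where the target jumps by $(A+1)a$ at $t=(Ak+a)(k+1)$ and the source is shown to gain at least that many points through a two-case analysis depending on whether an extra row appears. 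Until you carry out an argument of comparable completeness for your strip-by-strip comparison (or actually construct and verify the injection), the proof has a gap at its central step.
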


From Hutchings' work \cite{Hut11} and McDuff's work \cite{McD11}, there exists a complete characterization of four-dimensional ellipsoid embeddings, that is, 
\begin{equation} \label{ellipsoid-emb}
E(c,d) \hookrightarrow E(a,b) \,\,\,\,\,\mbox{if and only if} \,\,\,\,\, \mathcal N(c,d)_k \leq \mathcal N(a,b)_k.
\end{equation}
Here, $\mathcal N(a,b)$ denotes an infinite sequence of numbers consisting of all the non-negative linear combination $ma + nb$ (for $m, n \in \Z_{\geq 0}$) in a non-decreasing order (with repetitions), and $\mathcal N(a,b)_k$ is the $k$-th entry in $\mathcal N(a,b)$, similarly to $\mathcal N(c,d)_k$ and $\mathcal N(c,d)$. In fact, there exists a nice geometric description of $\mathcal N(a,b)$ in general (see Section 3.3 in \cite{Hut11}). Denote by $\Delta_{a,b}(t)$ the closed right triangle in $\R^2$ with vertices $(0,0)$, $(\frac{t}{a}, 0)$ and $(0, \frac{t}{b})$ for $t \geq 0$, and denote 
\begin{equation} \label{dfn-lattice}
{\mathcal R}_{a,b}(t) := \#\{\Delta_{a,b}(t) \cap \Z_{\geq 0}^2\}. 
\end{equation}
Then the characterization (\ref{ellipsoid-emb}) is equivalent to the statement that $E(c,d) \hookrightarrow E(a,b)$ if and only if $t_2 \leq t_1$ whenever $\mathcal R_{c,d}(t_2) = \mathcal R_{a,b}(t_1)$. Since $\mathcal R_{a,b}(t)$ and $\mathcal R_{c,d}(t)$ are non-decreasing functions of $t$, in return, (\ref{ellipsoid-emb}) is further equivalent to the following statement,  
\begin{equation} \label{equiv-lattice}
E(c,d) \hookrightarrow E(a,b) \,\,\,\,\mbox{if and only if} \,\,\,\, \mathcal R_{c,d}(t) \geq \mathcal R_{a,b}(t) \,\,\mbox{for all $t \geq 0$}. 
\end{equation}
We emphasize that the statement (\ref{ellipsoid-emb}) (as well as (\ref{equiv-lattice})), in general, is not always easy to verify. However, the equivalent statement (\ref{equiv-lattice}) has the advantage that some elementary geometry propositions can be applied. For instance, in terms of counting lattice points, the well-known Pick's theorem is very useful. Explicitly, for any polygon with integer vertices and without holes, 
\begin{equation} \label{pick}
\#\left\{\mbox{interior lattice points}\right\}+ \frac{\#\left\{\mbox{boundary lattice points}\right\}}{2}  = {\mbox{area}} + 1. 
\end{equation}
The proof of Proposition  \ref{prop-ellipsoid-emb} turns out to be a nice combination of the criterion (\ref{equiv-lattice}) and Pick's theorem (\ref{pick}). 

\begin{proof} [Proof of Proposition  \ref{prop-ellipsoid-emb}] By (\ref{equiv-lattice}), it suffices to show ${\cal R}_{k,(k+1)^2}(t) \geq  {\cal R}_{k+1,k(k+1)}(t)$
for any $t \geq 0$. We will prove this in two steps. First, we have the following result. 

\begin{lemma} \label{lemma-integer} For any $k \in \N$ and $A \in \Z_{\geq 0}$, we have 
\[ {\cal R}_{k,(k+1)^2}(Ak(k+1)) =  {\cal R}_{k+1,k(k+1)}(Ak(k+1)) = \frac{1}{2} kA(A+1) + (A+1).\]
 \end{lemma}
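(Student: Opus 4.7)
My plan is to evaluate each quantity by direct lattice-point counting in the corresponding triangles.

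The triangle $\Delta_{k+1,k(k+1)}(Ak(k+1))$ has vertices $(0,0)$, $(Ak,0)$, $(0,A)$ (all lattice points) and is cut out by $x + ky \leq Ak$. Summing over $y \in \{0,1,\ldots,A\}$, each of which contributes $k(A-y)+1$ admissible $x$-values, yields
\[
\mathcal{R}_{k+1,k(k+1)}(Ak(k+1)) = \sum_{y=0}^{A} (k(A-y)+1) = \frac{kA(A+1)}{2} + (A+1),
\]
as claimed.

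The second triangle $\Delta_{k,(k+1)^2}(Ak(k+1))$ has vertices $(0,0)$, $(A(k+1),0)$, $(0,Ak/(k+1))$; the last vertex is a lattice point only when $(k+1)\mid A$, so Pick's theorem does not apply directly. The key simplification is the identity $(k+1)^2 = k(k+2)+1$, which gives $(k+1)^2 y/k = (k+2)y + y/k$ with $(k+2)y \in \mathbb{Z}$; hence
\[
\left\lfloor A(k+1) - \frac{(k+1)^2 y}{k}\right\rfloor = A(k+1) - (k+2)y - \left\lceil \tfrac{y}{k}\right\rceil,
\]
and so for each integer $y$ in $[0, \lfloor Ak/(k+1)\rfloor]$ the number of admissible $x$ is $A(k+1) - (k+2)y - \lceil y/k\rceil + 1$.

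It remains to sum these contributions. Writing $A=(k+1)B+C$ with $0\le C\le k$, one checks that $\lfloor Ak/(k+1)\rfloor = Bk$ when $C=0$ and $Bk+C-1$ when $C\ge 1$. Decomposing $y=kq+s$ with $0\le s\le k-1$ (so that $\lceil y/k\rceil = q$ when $s=0$ and $q+1$ otherwise), I would split the sum into complete rows ($q\in\{0,\ldots,B-1\}$, $s\in\{0,\ldots,k-1\}$) plus a partial final row ($q=B$, with $s$ running over $\{0\}$ or $\{0,\ldots,C-1\}$ according to whether $C=0$ or $C\ge 1$). After collecting coefficients of $B^2$, $BC$, $C^2$, $B$, $C$, and the constant term, the sum collapses to $\frac{kA(A+1)}{2} + (A+1)$, matching the first count. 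The main obstacle is the bookkeeping in this last step: it is entirely mechanical but requires a little care to treat the cases $C=0$ and $C\ge 1$ in a unified way, since the final row is of different length in the two cases.
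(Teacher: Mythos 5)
Your proposal is correct, and it takes a genuinely different route from the paper. For the triangle $\Delta_{k+1,k(k+1)}(Ak(k+1))$ the paper applies Pick's theorem (both intercepts are integers), whereas you count rows directly; both are immediate. For $\Delta_{k,(k+1)^2}(Ak(k+1))$, where the $y$-intercept $\frac{Ak}{k+1}$ is generally not integral, the paper cuts the triangle into a lattice-vertex trapezoid (handled by Pick) and a small residual triangle counted line by line, relying on a number-theoretic claim relating the fractional parts of $\frac{Ak}{k+1}$ and $\frac{A}{k+1}$. You instead keep a single row-by-row count throughout, and your identity
\[
\left\lfloor A(k+1) - \tfrac{(k+1)^2 y}{k}\right\rfloor = A(k+1) - (k+2)y - \left\lceil \tfrac{y}{k}\right\rceil
\]
(valid because $(k+1)^2 = k(k+2)+1$ makes $A(k+1)-(k+2)y$ an integer) is a clean substitute for the paper's geometric decomposition; it reduces everything to the elementary sum $\sum_{y=0}^{Y}\lceil y/k\rceil$ with $Y=\lfloor Ak/(k+1)\rfloor$, which your decomposition $y=kq+s$ evaluates in closed form. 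I checked that the resulting polynomial identities in $B$ and $C$ do close up to $\frac{1}{2}kA(A+1)+(A+1)$ in both the $C=0$ and $C\ge 1$ cases, so the step you defer is indeed purely mechanical and not a gap in the argument — though for a complete write-up you should display that final computation, since it is where the two cases (partial last block of length $C$ versus no partial block) must be reconciled. The trade-off: the paper's route generalizes its Pick's-theorem machinery to the subsequent estimate for $t=Ak(k+1)+s$, while your route is more self-contained and avoids the auxiliary Claim entirely.
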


\begin{proof}[Proof of Lemma \ref{lemma-integer}] 
For the right triangle ${\Delta}_{k+1,k(k+1)}(Ak(k+1))$, its $x$-intercept is $\frac{t}{a} = \frac{Ak(k+1)}{k+1} = Ak$ and its $y$-intercept is $\frac{t}{b} = \frac{Ak(k+1)}{k(k+1)} = A$, where both $Ak, k \in \Z_{\geq 0}$. Then Pick's Theorem applies, and (\ref{pick}) implies that 
\[ {\cal R}_{k+1,k(k+1)}(Ak(k+1)) = \frac{A^2k}{2} + 1 + \frac{\#\left\{\mbox{boundary lattice points}\right\}}{2}. \]
Meanwhile, by elementary counting, $\#\left\{\mbox{boundary lattice points}\right\} = 2A + Ak$. Therefore, we get the desired conclusion for ${\cal R}_{k+1,k(k+1)}(Ak(k+1))$. 

\medskip

For the right triangle ${\Delta}_{k,(k+1)^2}(Ak(k+1))$, let us introduce the following notations, 
\[ c = \floor*{\frac{Ak}{k+1}} = \frac{Ak}{k+1} - \frac{C}{k+1}\]
where $C \in \N$ and $0 \leq C \leq k$, and
\[ b = \floor*{\frac{A}{k+1}}= \frac{A}{k+1} - \frac{B}{k+1}\]
where $B \in \N$ and $0 \leq B \leq k$. Here is a useful observation. 

\begin{claim} \label{claim-1} Either $B=C=0$ or $B + C = k+1$. In particular, $\floor*{\frac{Bk}{k+1}} = B-1$. \end{claim}

\begin{proof} [Proof of Claim \ref{claim-1}] We have $$\frac{Ak}{k+1} = kb + \frac{kB}{k+1}.$$ Therefore we can write $kB = m(k+1) + C$ where $m \ge 0$ is an integer. More explicitly, $m = \floor*{\frac{kB}{k+1}}$. As $C \ge 0$ we have $m <B$, but if $m \le B-2$ then $$C = kB - m(k+1) \ge kB - (k+1)(B-2) = 2k - B + 2 \ge k+2$$ a contradiction. Hence $m = B-1$ and we get $C = kB - (k+1)(B-1) = k+1 - B.$ Therefore, we get the desired claim. \end{proof}

Next, we will count lattice points in $\Delta_{k, (k+1)^2}(Ak(k+1))$ in the following way. Divide ${\Delta}_{k,(k+1)^2}(Ak(k+1))$ into two parts as in Figure \ref{figure-cut}, one small triangle $\Delta_{\rm small}$ and one trapezoid $P_{\rm trapezoid}$. 
\begin{figure}[h] 
\includegraphics[scale=0.8]{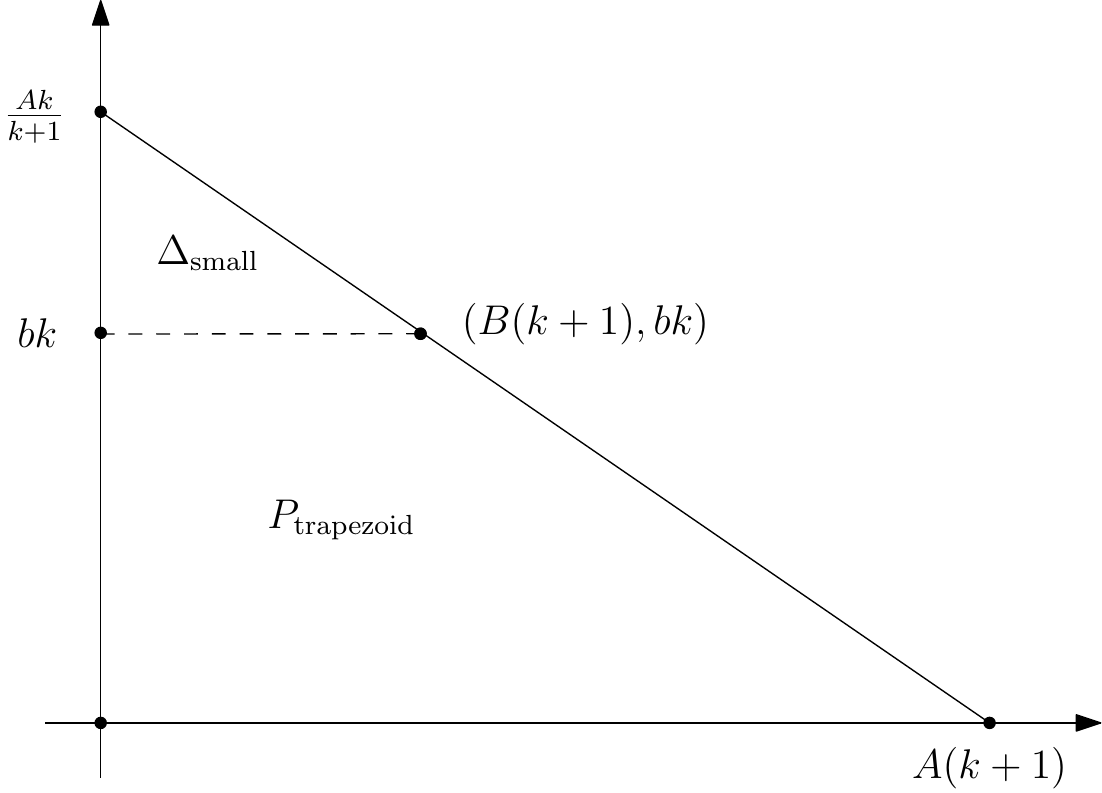}
\caption{A division of ${\Delta}_{k,(k+1)^2}(Ak(k+1))$.}\label{figure-cut}
\end{figure}
The way we cut in Figure \ref{figure-cut} guarantees that there are no lattice points on the hypothenuse of $\Delta_{\small}$ except possibly at the endpoints. Indeed, since the slope of ${\Delta}_{k,(k+1)^2}(Ak(k+1))$ is $- \frac{k}{(k+1)^2}$, moving from the vertex $(A(k+1), 0)$ to the vertex $(0,0)$, only the multiples of $(k+1)^2$ can generate a lattice point on the hypothenuse of ${\Delta}_{k,(k+1)^2}(Ak(k+1))$. There are at most $\floor*{\frac{A}{k+1}}$-many non-zero multiples of $(k+1)^2$ in the interval $[0, A(k+1)]$ and the smallest one is 
\[ A(k+1) - \floor*{\frac{A}{k+1}} (k+1)^2 = \left(\frac{A}{k+1}-\floor*{\frac{A}{k+1}}\right) (k+1)^2 = B(k+1). \]
Meanwhile, it is easy to verify that the corresponding $y$-coordinate is $bk$. In particular, $P_{\rm trapezoid}$ is a polygon with integer vertices and without holes. Then Pick's theorem applies, and (\ref{pick}) implies that 
\[ \# \left\{\begin{array}{c} \mbox{lattice points} \\ \mbox{in $P_{\rm trapezoid}$} \end{array}\right\} = \frac{bk(k+1)(A+B)}{2} +1 +  \frac{\#\left\{\mbox{boundary lattice points}\right\}}{2}. \]
Moreover, by an elementary counting, $\#\{\mbox{boundary lattice points}\} = (A+B+b)(k+1)$. Therefore, 
\begin{equation} \label{lattice-trap}
\# \left\{\begin{array}{c} \mbox{lattice points} \\ \mbox{in $P_{\rm trapezoid}$} \end{array}\right\}  = \frac{bk(k+1)(A+B)}{2} +1  + \frac{(A+B+b)(k+1)}{2}. 
\end{equation}
For the small triangle $\Delta_{\rm small}$, up to an integer shift (explicitly shifted down by $bk$), it suffices to consider the following triangle in Figure \ref{figure-small}. 
\begin{figure}[h] 
\includegraphics[scale=0.8]{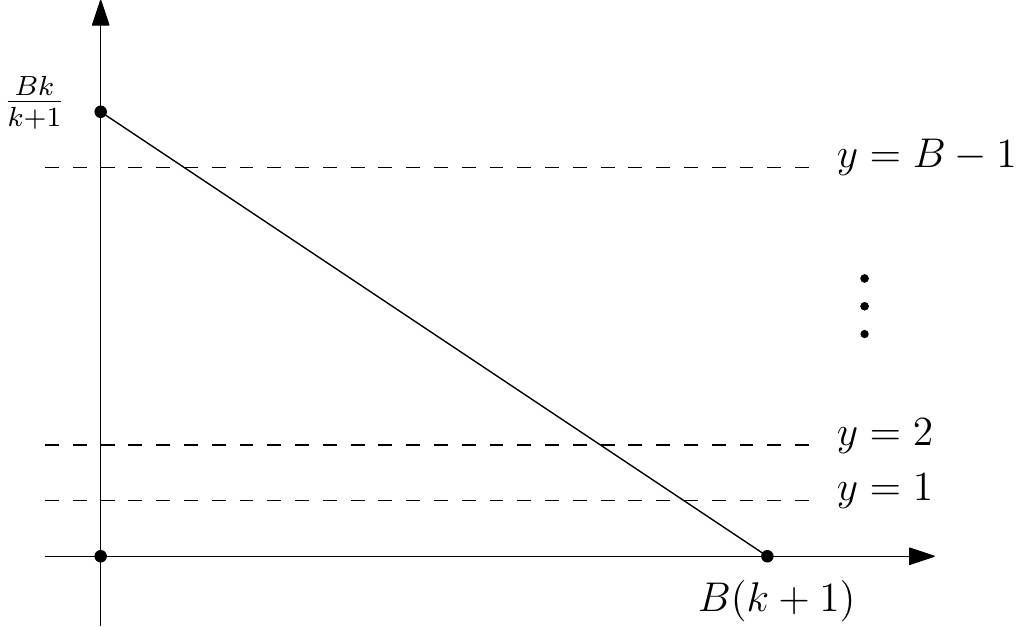}
\caption{Line-by-line counting in $\Delta_{\rm small}$.}\label{figure-small}
\end{figure}
In particular, the top line is $y = \floor*{\frac{Bk}{k+1}}$ and Claim \ref{claim-1} above says that $\floor*{\frac{Bk}{k+1}} = B-1$. Since there are no lattice points in the interior of the hypothenuse of the triangle in Figure \ref{figure-small}, for each $\ell \in \{1, …, B-1\}$, the intersection of the line $y = \ell$ with this triangle admits $(B(k+1) - \ell(k+2) -1)$-many lattice points in the interior. Then 
\begin{align}\label{lattice-tri}
\# \left\{\begin{array}{c} \mbox{lattice points} \\ \mbox{in $\Delta_{\rm small}$} \end{array}\right\} & = \sum_{\ell=1}^{B-1} (B(k+1) - \ell(k+2) -1) + (B-1) + B(k+1) \nonumber\\
& = B(B-1)(k+1) - \frac{B(B-1)(k+2)}{2} + B(k+1) \nonumber \\
& = \frac{B(B-1)k}{2} + \underbrace{B(k+1)}_{{\tiny \mbox{will be redundant}}}.
\end{align}
Hence, by summing up the lattices points in $P_{\rm trapezoid}$ (as in (\ref{lattice-trap})) and in $\Delta_{\rm small}$ (as in (\ref{lattice-tri})), we have
\begin{align*}
{\cal R}_{k,(k+1)^2}(Ak(k+1))& = \# \left\{\begin{array}{c} \mbox{lattice points} \\ \mbox{in $P_{\rm trapezoid}$} \end{array}\right\}  +  \#\left\{\begin{array}{c} \mbox{lattice points} \\ \mbox{in $\Delta_{\rm small}$} \end{array}\right\}  - \#\left\{\begin{array}{c} \mbox{repeated} \\ \mbox{counting}\end{array}\right\}\\
& =  \frac{bk(k+1)(A+B)}{2} +1  + \frac{(A+B+b)(k+1)}{2} + \frac{B(B-1)k}{2}\\
& = \frac{1}{2} kA(A+1) + (A+1)
\end{align*}
where the final step comes from a series of simplifications by using the relation $B = A - b(k+1)$. Thus we complete the proof of Lemma \ref{lemma-integer}.
\end{proof}

Suppose now that $t = Ak(k+1) + s$ where $0<s<k(k+1)$. Observe that the graph of ${\cal R}_{k+1,k(k+1)}(t)$ is horizontal with jumps when $s = a(k+1)$ for integers $a$. Thus we may assume $s=a(k+1)$ with $1 \le a \le k-1$. Then we have
\begin{equation} \label{lower-target}
{\cal R}_{k+1,k(k+1)}((Ak+a)(k+1)) - {\cal R}_{k+1,k(k+1)}(Ak(k+1)) = (A+1)a.
\end{equation}
It remains to estimate ${\cal R}_{k,(k+1)^2}((Ak+a)(k+1)) - {\cal R}_{k,(k+1)^2}(Ak(k+1))$, and we aim to obtain {\it at least} $(A+1)a$ as in (\ref{lower-target}) (see Figure \ref{figure-extended}). 
\begin{figure}[h] 
\includegraphics[scale=0.8]{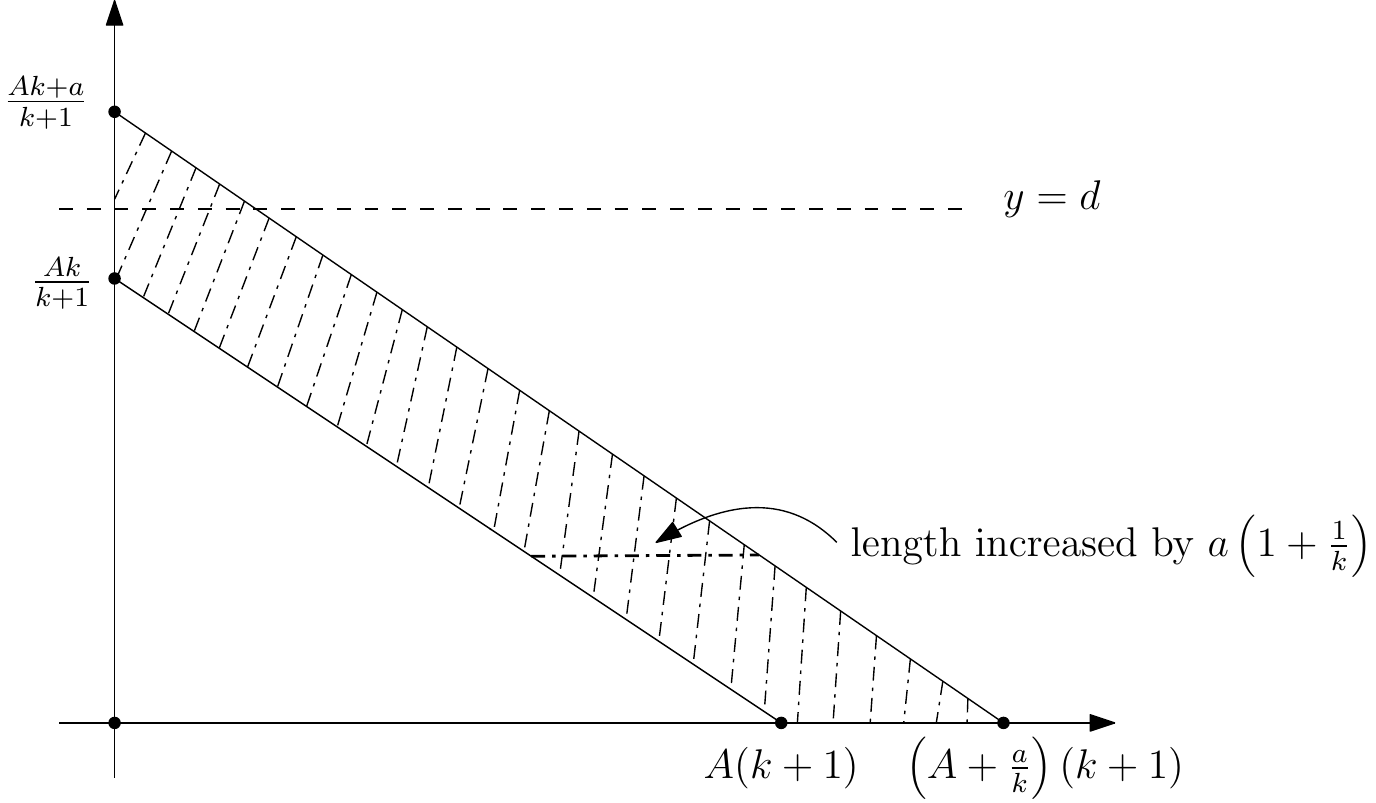}
\caption{Count additional lattice points in the shaded region.}\label{figure-extended}
\end{figure}
To this end, we use the same notations as above, but also introduce 
\[ d = \floor*{\frac{Ak+a}{k+1}} = \frac{Ak+a}{k+1} - \frac{D}{k+1}, \]
where $D \in \N$ and $0 \leq D \leq k$. Note that $d-c \le 1$. Moreover, 
\begin{itemize}
\item[(i)] if $d=c$, then $a \le k+1 - C$ and $D=C+a$; 
\item[(ii)] if $d=c+1$, then $D = C+a - (k+1)$. 
\end{itemize}
When (ii) is satisfied, the new triangle intersects $y=d$ and the row contains
\begin{equation} \label{extra-row}
1+ \floor*{\frac{D}{k+1}\frac{(k+1)^2}{k}} = 1 + \floor*{\frac{(k+1)D}{k}}
\end{equation}
lattice points. Meanwhile, the lengths of the other rows increase by $a(1 + \frac{1}{k})$, so  we count the number of additional lattice points as
$$a + (a+1) + \dots + (a+1) + a + \dots + a + \cdots, $$
where the $(a+1)$ terms come in blocks of length $a$ and the $a$ terms in blocks of length $k-a$. Therefore, depending whether the sum ends with $(a+1)$ (if $c-kb \le a$) or $a$ terms (if $c-kb \ge a+1$), we have 
\[ \#\left\{\begin{array}{c} \mbox{additional} \\ \mbox{points}\end{array}\right\}  =  \left\{ \begin{array}{lcl} a + (k+1)ab + (c-kb)(a+1) & \mbox{if} & c-kb \leq a \\ a+ (k+1)ab + a(c-kb +1) & \mbox{if} & c-kb \geq a+1 \end{array} \right..\]
It is easy to obtain the desired $(A+1)a$-many additional lattice points when $B = C =0$. If not, then Claim \ref{claim-1} implies that $b+c = A-1$ and $c-kb = B-1$, and a further simplification gives  
\[ \#\left\{\begin{array}{c} \mbox{additional} \\ \mbox{points}\end{array}\right\}  =  \left\{ \begin{array}{lcl} aA + B-1 & \mbox{if} & c-kb \leq a \\ (A+1)a & \mbox{if} & c-kb \geq a+1 \end{array} \right..\]
Therefore, it suffices to focus on the case where $c-kb \leq a$. 

\medskip

If $d=c$, then by the item (i) above, $D = C+ a = k+1 - B + a$, which implies that $B-1= a + k - D \ge a$. Thus we get at least $(A+1)a$-many additional lattice points as required. If $d = c+1$, then we have an extra row so the cardinality of the additional points in total is $aA + B + \floor*{\frac{(k+1)D}{k}}$ by (\ref{extra-row}). The item (ii) above implies that $D = a - B$ and then we have
\[ aA + B + \floor*{\frac{(k+1)D}{k}} = aA +a -D +\floor*{\frac{(k+1)D}{k}}  \ge (A+1)a. \]
Therefore, we complete the proof of Proposition \ref{prop-ellipsoid-emb}. \end{proof}

\begin{remark} Since ${\rm Vol}(E(k, k+1)^2) = {\rm Vol}(E(k+1, k(k+1))) = \frac{k(k+1)^2}{2}$, the symplectic embedding guaranteed by Proposition \ref{prop-ellipsoid-emb} is volume-filling. In other words, this symplectic embedding is optimal in the sense there does not exist any $\lambda >1$ such that $\lambda E(k, (k+1)^2) \hookrightarrow E(k+1, k(k+1))$. \end{remark}

\vspace*{5mm}

\bibliographystyle{amsplain}
\bibliography{biblio_hkpl}

\providecommand{\bysame}{\leavevmode\hbox to3em{\hrulefill}\thinspace}
\providecommand{\MR}{\relax\ifhmode\unskip\space\fi MR }
\providecommand{\MRhref}[2]{%
  \href{http://www.ams.org/mathscinet-getitem?mr=#1}{#2}
}
\providecommand{\href}[2]{#2}
\begin{thebibliography}{10}

\bibitem{BEHWZ03}
Fr{\'e}d{\'e}ric Bourgeois, Yakov Eliashberg, Helmut Hofer, Krzysztof Wysocki,
  and Eduard Zehnder, \emph{Compactness results in symplectic field theory},
  Geom. Topol. \textbf{7} (2003), 799--888. \MR{2026549}

\bibitem{BEP12}
Lev Buhovsky, Michael Entov, and Leonid Polterovich, \emph{Poisson brackets and
  symplectic invariants}, Selecta Math. (N.S.) \textbf{18} (2012), no.~1,
  89--157. \MR{2891862}

\bibitem{chaperon83}
Marc Chaperon, \emph{Quelques questions de g\'{e}om\'{e}trie symplectique},
  Ast\'{e}risque, vol. 105, pp.~231--249, Soc. Math. France, Paris, 1983.
  \MR{728991}

\bibitem{Che96}
Yu.~V. Chekanov, \emph{Lagrangian tori in a symplectic vector space and global
  symplectomorphisms}, Math. Z. \textbf{223} (1996), no.~4, 547--559.
  \MR{1421954}

\bibitem{CS10}
Yuri Chekanov and Felix Schlenk, \emph{Notes on monotone {L}agrangian twist
  tori}, Electron. Res. Announc. Math. Sci. \textbf{17} (2010), 104--121.
  \MR{2735030}

\bibitem{CM18}
K.~Cieliebak and K.~Mohnke, \emph{Punctured holomorphic curves and {L}agrangian
  embeddings}, Invent. Math. \textbf{212} (2018), no.~1, 213--295. \MR{3773793}

\bibitem{C-G19}
Dan Cristofaro-Gardiner, \emph{Symplectic embeddings from concave toric domains
  into convex ones}, J. Differential Geom. \textbf{112} (2019), no.~2,
  199--232, With an appendix by Cristofaro-Gardiner and Keon Choi. \MR{3960266}

\bibitem{C-GHS21}
Dan Cristofaro-Gardiner, Richard Hind, and Kyler Siegel, \emph{Higher
  symplectic capacities and the stabilized embedding problem for integral
  ellipsoids}, arXiv preprint arXiv:2102.07895.

\bibitem{C-GFS17}
Daniel Cristofaro-Gardiner, David Frenkel, and Felix Schlenk, \emph{Symplectic
  embeddings of four-dimensional ellipsoids into integral polydiscs}, Algebr.
  Geom. Topol. \textbf{17} (2017), no.~2, 1189--1260. \MR{3623687}

\bibitem{EH90}
Ivar Ekeland and Helmut Hofer, \emph{Symplectic topology and {H}amiltonian
  dynamics. {II}}, Math. Z. \textbf{203} (1990), no.~4, 553--567. \MR{1044064}

\bibitem{Eli91}
Yakov Eliashberg, \emph{New invariants of open symplectic and contact
  manifolds}, J. Amer. Math. Soc. \textbf{4} (1991), no.~3, 513--520.
  \MR{1102580}

\bibitem{EGH00}
Yakov Eliashberg, Alexander Givental, and Helmut Hofer, \emph{Introduction to
  symplectic field theory}, Geom. Funct. Anal. (2000), no.~Special Volume, Part
  II, 560--673, GAFA 2000 (Tel Aviv, 1999). \MR{1826267}

\bibitem{EGM18}
Michael Entov, Yaniv Ganor, and Cedric Membrez, \emph{Lagrangian isotopies and
  symplectic function theory}, Comment. Math. Helv. \textbf{93} (2018), no.~4,
  829--882. \MR{3880228}

\bibitem{GH18}
Jean Gutt and Michael Hutchings, \emph{Symplectic capacities from positive
  {$S^1$}-equivariant symplectic homology}, Algebr. Geom. Topol. \textbf{18}
  (2018), no.~6, 3537--3600. \MR{3868228}

\bibitem{GU19}
Jean Gutt and Michael Usher, \emph{Symplectically knotted codimension-zero
  embeddings of domains in {$\Bbb{R}^4$}}, Duke Math. J. \textbf{168} (2019),
  no.~12, 2299--2363. \MR{3999447}

\bibitem{HK18-2}
Richard Hind and Ely Kerman, \emph{{$J$}-holomorphic cylinders between
  ellipsoids in dimension four}, J. Symplectic Geom. \textbf{18} (2020), no.~5,
  1221--1245. \MR{4174300}

\bibitem{HO19}
Richard Hind and Emmanuel Opshtein, \emph{Squeezing {L}agrangian tori in
  dimension 4}, Comment. Math. Helv. \textbf{95} (2020), no.~3, 535--567.
  \MR{4152624}

\bibitem{hindzhang}
Richard Hind and Jun Zhang, \emph{The shape invariant of symplectic
  ellipsoids}, arXiv preprint arXiv: 2010.02185.

\bibitem{Hof06}
Helmut Hofer, \emph{A general {F}redholm theory and applications}, Int. Press,
  Somerville, MA, 2006. \MR{2459290}

\bibitem{Hut11}
Michael Hutchings, \emph{Quantitative embedded contact homology}, J.
  Differential Geom. \textbf{88} (2011), no.~2, 231--266. \MR{2838266}

\bibitem{McD91}
Dusa McDuff, \emph{Blow ups and symplectic embeddings in dimension {$4$}},
  Topology \textbf{30} (1991), no.~3, 409--421. \MR{1113685}

\bibitem{McD09}
\bysame, \emph{Symplectic embeddings of 4-dimensional ellipsoids}, J. Topol.
  \textbf{2} (2009), no.~1, 1--22. \MR{2499436}

\bibitem{McD11}
\bysame, \emph{The {H}ofer conjecture on embedding symplectic ellipsoids}, J.
  Differential Geom. \textbf{88} (2011), no.~3, 519--532. \MR{2844441}

\bibitem{McD18}
\bysame, \emph{A remark on the stabilized symplectic embedding problem for
  ellipsoids}, Eur. J. Math. \textbf{4} (2018), no.~1, 356--371. \MR{3782228}

\bibitem{MS12}
Dusa McDuff and Felix Schlenk, \emph{The embedding capacity of 4-dimensional
  symplectic ellipsoids}, Ann. of Math. (2) \textbf{175} (2012), no.~3,
  1191--1282. \MR{2912705}

\bibitem{Mul19}
Stefan M\"{u}ller, \emph{{$C^0$}-characterization of symplectic and contact
  embeddings and {L}agrangian rigidity}, Internat. J. Math. \textbf{30} (2019),
  no.~9, 1950035, 48. \MR{3995450}

\bibitem{RS93}
Joel Robbin and Dietmar Salamon, \emph{The {M}aslov index for paths}, Topology
  \textbf{32} (1993), no.~4, 827--844. \MR{1241874}

\bibitem{RZ20}
Daniel Rosen and Jun Zhang, \emph{Relative growth rate and contact
  {B}anach-{M}azur distance}, arXiv preprint arXiv:2001.05094.

\bibitem{Sch-book}
Felix Schlenk, \emph{Embedding problems in symplectic geometry}, De Gruyter
  Expositions in Mathematics, vol.~40, Walter de Gruyter GmbH \& Co. KG,
  Berlin, 2005. \MR{2147307}

\bibitem{STV18}
Egor Shelukhin, Dmitry Tonkonog, and Renato Vianna, \emph{Geometry of
  symplectic flux and lagrangian torus fibrations}, arXiv preprint
  arXiv:1804.02044.

\bibitem{siefring}
Richard Siefring, \emph{Intersection theory of punctured pseudoholomorphic
  curves}, Geom. Topol. \textbf{15} (2011), no.~4, 2351--2457. \MR{2862160}

\bibitem{Sieip}
Kyler Siegel, \emph{Computing higher symplectic capacities {I}{I}}, In
  progress.

\bibitem{Sik89}
Jean-Claude Sikorav, \emph{Rigidit\'{e} symplectique dans le cotangent de
  {$T^n$}}, Duke Math. J. \textbf{59} (1989), no.~3, 759--763. \MR{1046748}

\bibitem{SYZ96}
Andrew Strominger, Shing-Tung Yau, and Eric Zaslow, \emph{Mirror symmetry is
  {$T$}-duality}, Nuclear Phys. B \textbf{479} (1996), no.~1-2, 243--259.
  \MR{1429831}

\bibitem{Wen10}
Chris Wendl, \emph{Automatic transversality and orbifolds of punctured
  holomorphic curves in dimension four}, Comment. Math. Helv. \textbf{85}
  (2010), no.~2, 347--407. \MR{2595183}

\end{thebibliography}
\vspace*{5mm}
\end{document}